\documentclass[11pt,reqno]{amsproc}
\allowdisplaybreaks
\usepackage{graphicx}
\usepackage{subfigure}
\usepackage{morefloats}
\usepackage{mathtools}
\usepackage{amssymb}
\usepackage{stmaryrd}
\usepackage{caption}   % for \captionsetup and \caption*

\usepackage{enumerate}
\usepackage{fullpage}
\usepackage{textcomp}
\usepackage{lettrine}
\usepackage{booktabs}

\usepackage[debug=false, colorlinks=true, pdfstartview=FitV,
linkcolor=blue, citecolor=blue, urlcolor=blue]{hyperref}
\usepackage[semicolon,square,authoryear,sort]{natbib}

\usepackage[doipre={DOI:~}]{uri}

\usepackage{color}
\usepackage{colortbl}
\usepackage[dvipsnames,table]{xcolor}
\usepackage[most]{tcolorbox}

\newlength{\drop}
\definecolor{amethyst}{rgb}{0.6, 0.4, 0.8}
\definecolor{burgundy}{rgb}{0.5, 0.0, 0.13}

\usepackage{longtable} 
\usepackage{multirow} 
\usepackage{rotating} 
\usepackage{bigstrut} 
\usepackage{hhline} 
\usepackage{listings}
\usepackage{xcolor}

\lstdefinestyle{mypython}{
    language=Python,
    backgroundcolor=\color{gray!10},   % light gray background
    commentstyle=\color{green!60!black}, % comments in green
    keywordstyle=\color{blue},          % keywords in blue
    numberstyle=\tiny\color{gray},      % line numbers in gray
    stringstyle=\color{red!70!black},   % strings in red
    basicstyle=\ttfamily\footnotesize,  % code font
    breaklines=true,                    % automatic line breaking
    frame=single,                       % adds a frame around the code
    numbers=left,                       % line numbers on the left
    stepnumber=1,                       % number every line
    showstringspaces=false,             % hide spaces in strings
    tabsize=4,                          % tab width
    captionpos=t                        % caption at the bottom
}
\usepackage{amsthm}

\newtheorem{theorem}{Theorem}[section]
\newtheorem{lemma}[theorem]{Lemma}
\newtheorem{proposition}[theorem]{Proposition}

\newtheorem{remark}{Remark}[section]

\title{\textbf{An Adaptive Machine Learning Framework for Fluid Flow \\
in Dual-Network Porous Media}}

\author{\textbf{Venkat S.~Maduri} and \textbf{Kalyana B.~Nakshatrala} \\
  {\small Department of Civil and Environmental Engineering \\
  University of Houston, Houston, Texas 77204, USA.}\\
  {\small \textbf{Correspondence to:} ~knakshatrala@uh.edu (e-mail), +1-713-743-4418 (phone)}}

\keywords{double porosity/permeability model; 
physics-informed neural network; 
adaptive training strategies;
shared trunk with multi-head architecture;
inverse modeling; 
convergence analysis}

\begin{document}

%===========================;
%  Title page of the paper  ;
%===========================;
\begin{titlepage}
  \drop=0.1\textheight
  \centering
  \vspace*{0.5\baselineskip}
  \rule{\textwidth}{1.6pt}\vspace*{-\baselineskip}\vspace*{2pt}
  \rule{\textwidth}{0.4pt}\\[\baselineskip]
       {\Large \textbf{\color{burgundy}
       An Adaptive Machine Learning Framework for Fluid Flow \\[0.3\baselineskip] in Dual-Network Porous Media}}\\[0.3\baselineskip]
       \rule{\textwidth}{0.4pt}\vspace*{-\baselineskip}\vspace{3.2pt}
       \rule{\textwidth}{1.6pt}\\[0.5\baselineskip]
       \scshape
       An e-print of this paper is available on arXiv. \par
       \vspace*{0.25\baselineskip}
       Authored by \\[0.5\baselineskip]

       {\Large V.~S.~Maduri\par}
  {\itshape Graduate Student, Department of Civil \& Environmental Engineering \\
  University of Houston, Houston, Texas 77204.}\\[0.25\baselineskip]

  {\Large K.~B.~Nakshatrala\par}
  {\itshape Department of Civil \& Environmental Engineering \\
  University of Houston, Houston, Texas 77204. \\
  \textbf{phone:} +1-713-743-4418, \textbf{e-mail:} knakshatrala@uh.edu \\
  \textbf{website:} http://www.cive.uh.edu/faculty/nakshatrala}\\[0.75\baselineskip]

\vspace{-0.25in}
%----------------------;
%  Graphical abstract  ;
%----------------------;
   \begin{figure*}[ht]
        \centering
        \includegraphics[width=0.9\linewidth]{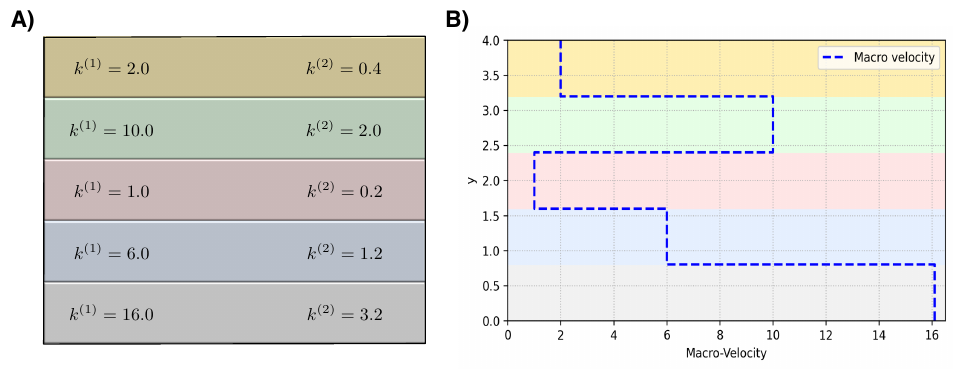}
        \captionsetup{labelformat=empty}
        \caption{\textbf{A)} The problem is formulated within the framework of the double porosity/permeability (DPP) model, consisting of multiple layers, each characterized by distinct macro- and micro-scale permeabilities, denoted by \( k^{(1)} \) and \( k^{(2)} \), respectively. The analytical solution within each layer yields constant horizontal components for both the macro- and micro-scale velocities, while the vertical components are zero. \textbf{B)} The macro-scale velocity field, computed using the proposed adaptive Physics-Informed Neural Network (PINN) framework, is visualized across the layers. The framework accurately captures discontinuities in the velocity field, significantly outperforming continuous mixed finite element methods and achieving accuracy levels typically associated with discontinuous Galerkin (DG) methods.}
  \end{figure*}

  \vfill
  {\scshape 2026} \\
  {\small Computational \& Applied Mechanics Laboratory} \par
\end{titlepage}

%=========================;
%  Abstract of the paper  ;
%=========================;
\begin{abstract}    
    Porous materials---natural or engineered---often exhibit dual pore-network structures that govern processes such as mineral exploration and hydrocarbon recovery from tight shales. Double porosity/permeability (DPP) mathematical models describe incompressible fluid flow through two interacting pore networks with inter-network mass exchange. Despite significant advances in numerical methods, there remains a need for computational frameworks that enable rapid forecasting, data assimilation, and reliable inverse analysis. To address this, we present a physics-informed neural network (PINN) framework for forward and inverse modeling of DPP systems. The proposed approach encodes the governing equations in mixed form, along with boundary conditions, directly into the loss function, with adaptive weighting strategies to balance their contributions. Key features of the framework include adaptive weight tuning, dynamic collocation point selection, and the use of shared trunk neural architectures to efficiently capture the coupled behavior of the dual pore networks. The proposed PINN framework offers several notable advantages. It is inherently mesh-free, making it well-suited for complex geometries typical of porous media. It accurately captures discontinuities in solution fields across layered domains without introducing spurious oscillations commonly observed in classical finite element formulations. Importantly, the framework is well-suited for inverse analysis, enabling robust parameter identification in scenarios where key physical quantities---such as the mass transfer coefficient in DPP models---are difficult to measure directly. In addition, a systematic convergence analysis is provided to rigorously assess the stability, accuracy, and reliability of the method. The effectiveness and computational advantages of the approach are demonstrated through a series of representative numerical experiments.
\end{abstract}

\maketitle

%==================================;
%  Include all the sections below  ;
%==================================;
\setcounter{figure}{0}   
    \vspace{-0.2in}
    \section*{Abbreviations}

\begin{center}
   \begin{tabular}{|l|l|} \hline 
       DG & Discontinuous Galerkin \\
       DPP & Double Porosity/Permeability \\
       FEM & Finite Element Method \\
       LBB & Ladyzhenskaya–Babu\v{s}ka–Brezzi \\
       LS & Least-Squares \\ 
       PDE & Partial Differential Equation \\
       PINN & Physics-Informed Neural Network \\ 
       RAR & Residual-based Adaptive Refinement \\\hline  
   \end{tabular}
\end{center}
    
    \newpage 
    
    %*********************************************;
%                                             ;
%  NAME                                       ;
%    S1_APINNs_Intro.tex                      ;
%                                             ;
%  WRITTEN BY                                 ;
%    Kalyana B. Nakshatrala                   ;
%                                             ;
%*********************************************;
\section{INTRODUCTION AND MOTIVATION}
\label{Sec:S1_APINNs_Intro}
\lettrine[findent=2pt]{\fbox{\textbf{I}}}n many natural porous materials---such as fractured carbonates and clay-rich soils---two distinct pore networks coexist \citep{warren1963behavior}. Synthetic materials, such as engineered composites, can also exhibit dual pore-network structures; in many cases, they are intentionally designed this way to achieve specific functionalities (e.g., sound absorption or thermal insulation) \citep{pornea2022}. Interestingly, dual-pore structures are not confined to a single scale; rather, they can appear at both the material and system levels, manifesting across multiple length scales \citep{Gerke1993}. This multiscale nature is particularly evident in natural systems composed of porous building blocks. For example, consider a geological formation consisting of a pile of rocks: each rock contains an internal pore network, while large voids exist between individual rocks (see \textbf{Fig.~\ref{Fig1:APINNs_Concept_figure}}). In this case, the intra-rock pores form one network, and the inter-rock voids form a second, distinct network. As a result, fluid flow within such systems exhibits clearly dual behavior \citep{Barenblatt1960}.

%------------------------------------;
%  Figure 1: Concept figure for DPP  ;
%------------------------------------;
\begin{figure}[h]
    \centering
    \includegraphics[width=0.4\linewidth]{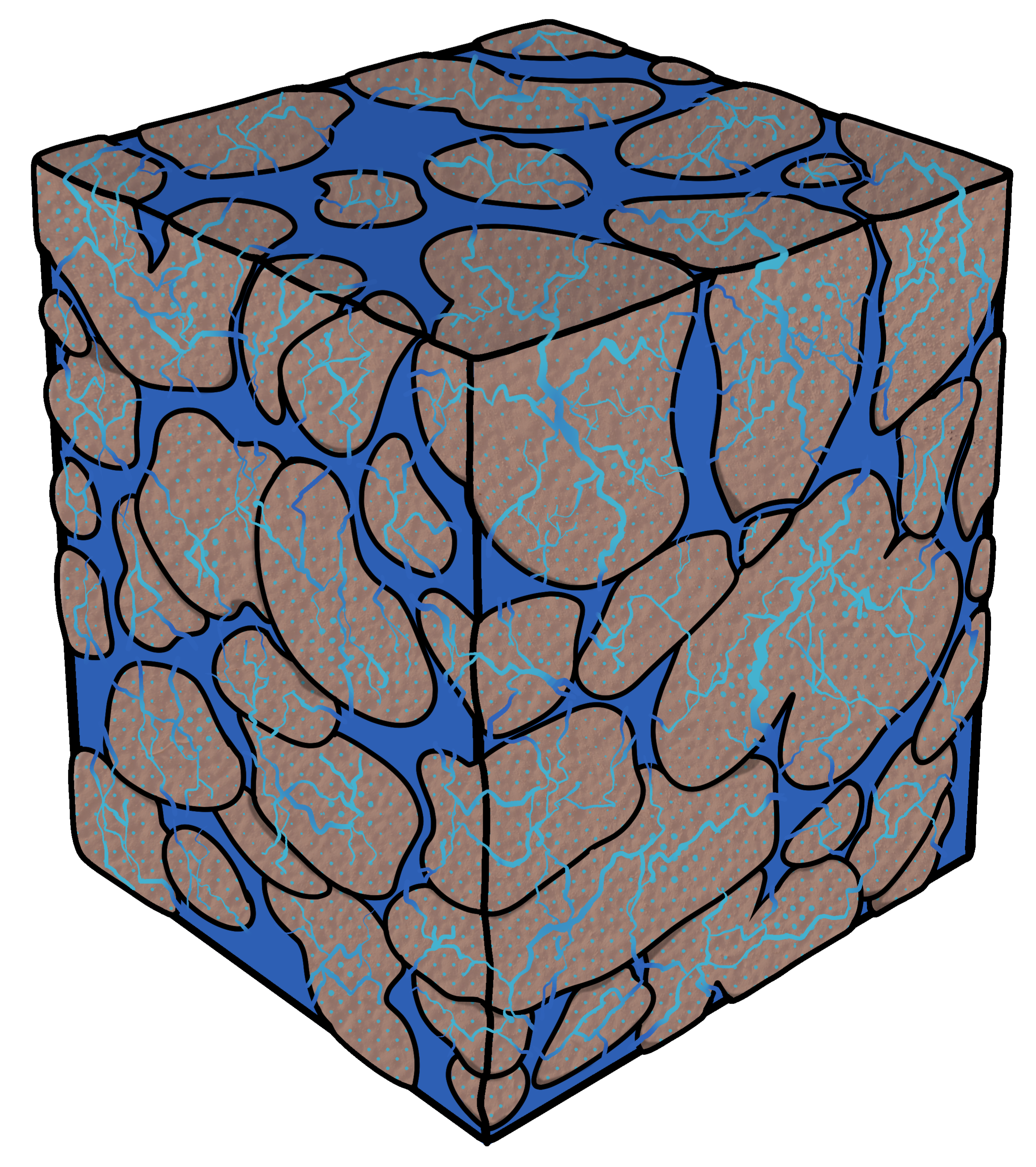}
    \caption{Fluid flow in a porous medium with dual pore networks, illustrating mass exchange between the networks. The macro-pore network is shown in dark blue and the micro-pore network in light blue. (For interpretation of the references to color in this figure legend, the reader is referred to the online version of this article.) \label{Fig1:APINNs_Concept_figure}}
\end{figure}

Beyond their structural coexistence, the two pore networks also differ fundamentally in their properties and roles. They vary in pore size and hydrodynamic behavior (e.g., permeability) \citep{Vogel2000}, and they serve distinct functions in both natural and engineered systems. This dual functionality is illustrated by the following two examples:
%---------------------------------------;
%  Enumerate: Two application examples  ;
%---------------------------------------;
\begin{enumerate}
    \item \textbf{Hydrocarbon recovery from tight shale formations.} In this context, macropores---such as fractures and large voids---act as highly permeable flow channels, while micropores primarily serve as storage zones with limited fluid exchange. The two networks are connected through fissures or conduits that enable continuous mass transfer between them \citep{warren1963behavior}.
    %%%
    \item \textbf{Critical mineral extraction and exploration.} This application area, which is of increasing national interest, also involves dual-pore networks \citep{nrc2008critical}. Such systems are characterized by two contrasting hydraulic domains: fast, fracture-dominated pathways that deliver lixiviants and transport dissolved metals, and a fine-grained matrix in which valuable elements reside and are released gradually through pressure- and concentration-driven exchange \citep{steefel2005reactive,molins2015reactive,deng2011dual}. Recent studies have also begun to explore physics-informed machine learning approaches for reactive transport modeling in critical minerals applications; see, for example, \citet{adhikari2025reactive}. 
\end{enumerate}

Traditional models based on the classical Darcy law assume a single, homogeneous pore network and therefore cannot capture the dual-scale flow behavior and the associated mass transfer between the interconnected networks. Recognizing this limitation, the \emph{double porosity/permeability (DPP)} models have been developed to provide a more realistic framework. As described by \citet{nakshatrala2018modeling}, DPP model not only accounts for the simultaneous flow within both the macro- and micro-pore networks but also rigorously incorporates the mass exchange between them. This theoretical advancement offers enhanced predictive capabilities for pressure and velocity fields in heterogeneous porous media.

Despite its conceptual elegance, the DPP model introduces significant challenges for numerical simulation. In particular, its complex structure makes it difficult to discretize and solve using conventional finite element methods, requiring advanced stabilization techniques and specialized solver strategies see \citep{joshaghani2019stabilized, joshaghani2019composable}. Although several such conventional numerical formulations exist, there remains a strong need for a machine learning–based modeling framework for DPP systems (as illustrated conceptually in \textbf{Fig.~\ref{Fig2:APINNs_Modeling_approaches}}). This need arises for three main reasons:
\begin{enumerate}
    \item \textbf{Incomplete or difficult-to-obtain model components:} While mathematical models for DPP systems exist, many of their components---particularly model parameters---are not fully understood or are challenging to obtain in practice. For instance, permeability fields are especially difficult to characterize in subsurface environments, as they often vary spatially. Moreover, the mass transfer coefficient, a critical input in plug-scale DPP models, is typically not derived from first principles but instead assumed or calibrated. At the same time, alternative data sources---such as well logs, in situ sensors, acoustic emission data, or even satellite imagery---are often available and can be leveraged in conjunction with mathematical models. In such cases, where the goal is to fuse physical models with limited or indirect data, machine learning tools---particularly hybrid or physics-informed approaches---offer significant promise (see Fig.~\ref{Fig2:APINNs_Modeling_approaches}).
    \item \textbf{Inverse problem estimation:} Building on the previous point, a common objective is to infer hard-to-measure parameters---such as the spatial permeability distribution or interpore volumetric transfer rates---through inverse modeling. This is a natural fit for machine learning methods, which excel when paired with data-driven inverse modeling frameworks.
    \item \textbf{Need for rapid forecasting:} In time-sensitive applications such as critical mineral exploration, real-time monitoring and decision making require fast and reliable forecasts. Once trained, machine learning models can deliver orders of magnitude faster predictions than conventional numerical solvers, offering a clear computational advantage.
\end{enumerate}
Motivated by these advantages, this paper presents a modeling framework based on \emph{physics-informed neural network (PINN)} to solve DPP problems.

%---------------------------------;
%  Figure 2: Modeling approaches  ;
%---------------------------------;
\begin{figure}
    \centering
    \includegraphics[width=0.60\linewidth]{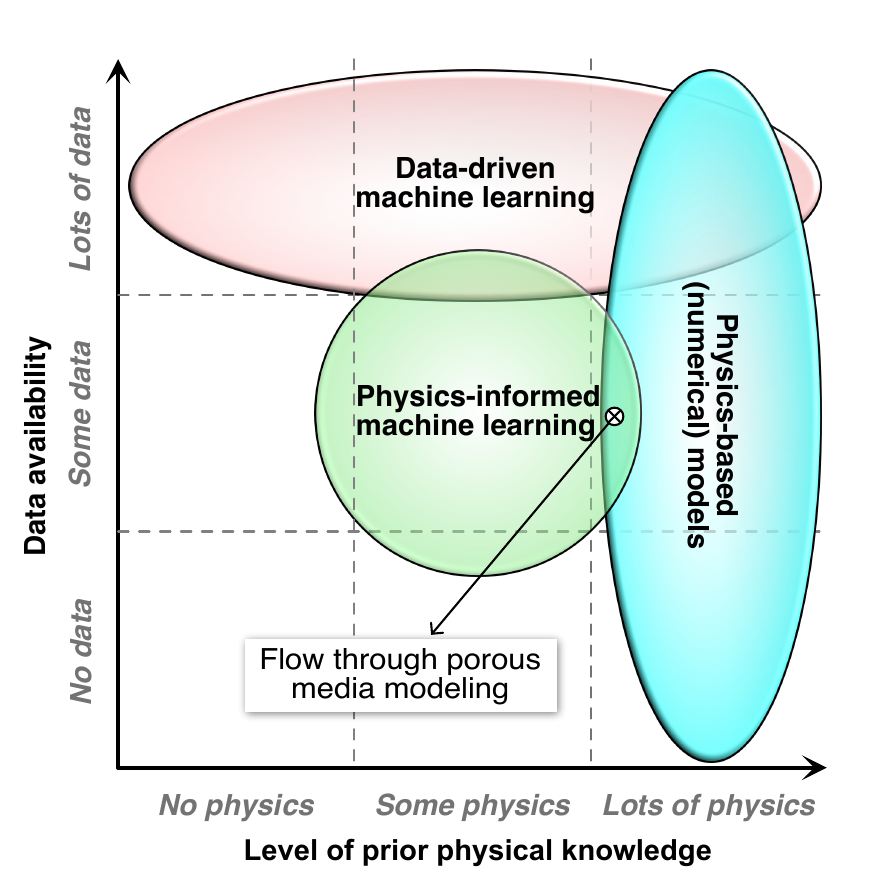}
    \caption{Modeling approaches across the spectrum from data-driven to physics-based methods, adapted from \citet{tartakovsky2020physics}. The position of flow through porous media modeling is highlighted, illustrating its potential to leverage physics-informed machine learning techniques.
    \label{Fig2:APINNs_Modeling_approaches}}
\end{figure}

%---------------------------------------------;
%  One paragraph on neural networks and PINN  ;
%---------------------------------------------;
Neural networks have evolved from early perceptron models to sophisticated deep learning systems capable of approximating complex functions. Building on decades of research---including the breakthrough back-propagation algorithm---these models have been applied to a wide array of challenges (e.g., \citep{goodfellow2016deep}). More recently, PINN-based methods have emerged as a transformative method for solving \emph{partial differential equations (PDEs)}. Their conceptual roots can be traced back to the pioneering work of \citet{dissanayake1994neural} and \citet{lagaris2000neural}, who first explored the use of neural networks for solving differential equations. PINN, as formally introduced and named by \citet{raissi2019physics}, established a comprehensive framework for incorporating physical laws directly into neural network training by treating PDE residuals as part of the loss function. This enables PINN to approximate PDE solutions without the need for labeled data. By embedding governing physical laws into the optimization process, PINN offers a compelling alternative to traditional numerical methods such as finite difference, finite element, and finite volume approaches---especially in high-dimensional and nonlinear problem domains. Numerous studies have highlighted the potential of PINN in modeling complex phenomena such as fluid dynamics, heat transfer, and other systems where classical techniques face significant challenges \citep{raissi2020hidden,jagtap2020adaptive,kissas2020machine,mao2020physics,jagtap2023coolpinns}.

%-----------------------------------;
%  Challenges With classical PINNs  ;
%-----------------------------------;
Despite their conceptual elegance, classical PINN faces significant limitations in practical applications. Their performance deteriorates when applied to complex PDEs characterized by multiscale behavior, steep gradients, or discontinuities \citep{krishnapriyan2021characterizing}. This degradation often manifests as slow convergence, stagnating loss functions, and poor accuracy in regions with challenging physics. These issues have been attributed to several underlying factors: the spectral bias of neural networks toward smooth, low-frequency solutions; the ill-conditioning of PDE operators that leads to imbalanced gradient propagation during training; and the inefficiency of uniform sampling strategies, which fail to adequately resolve regions requiring fine-scale resolution \citep{wang2021when}.

These general challenges are further compounded in the case of DPP models, which introduce an additional layer of complexity due to the structure of their governing equations. Unlike standard Darcy-type models that can often be reduced to pressure-only formulations, DPP systems require the solution of coupled PDEs involving multiple interdependent fields—such as pressures and velocities across two distinct pore networks---governing mass conservation, momentum balance, and inter-pore exchange. Na\"ively assigning separate neural networks to each field variable not only increases computational burden but also undermines the physical coherence of the model \citep{wang2021when}. This decoupling leads to inconsistent enforcement of constraints and deteriorated solution accuracy, particularly in regions where sharp gradients or multiscale dynamics are dominant \citep{krishnapriyan2021characterizing}.

To overcome the above challenges, the proposed modeling framework incorporates the following algorithmic advancements:
\begin{enumerate}
    %---------------------;
    %  Adaptive sampling  ;
    %---------------------;
    \item[(A1)] \textbf{Adaptive sampling:} To efficiently resolve regions requiring higher resolution, we employ adaptive sampling, which dynamically concentrates computational resources in areas with larger approximation errors. This approach has proven effective in addressing nonuniform solution behaviors, as demonstrated in previous works such as \citet{lu2021deepxde} and \citet{nabian2019deep}.
    %----------------------;
    %  Adaptive weighting  ;
    %----------------------;
    \item[(A2)] \textbf{Adaptive weighting:} While adaptive sampling improves the spatial distribution of training points, it does not account for the relative importance of different loss components. To address this, we adopt the normalized learning speed-based  adaptive weighting inspired by the work of \citet{gao2025physics}, which dynamically balances PDE-residual and boundary-condition terms by tracking their normalized learning speeds—thereby improving training efficiency and convergence.
    %----------------;
    %  Shared trunk  ;
    %----------------;
    \item[(A3)] \textbf{Shared trunk with slim head architecture:} 
    To effectively capture the coupled nature of the governing equations in the DPP system, we employ a shared trunk with slim head neural architecture. Originally introduced in the multi-task learning framework by \citet{caruana1997multitask} and further developed in works such as \citet{zou2023hydra}, this design consists of a common neural backbone that learns shared physical representations across all field variables. Lightweight, variable-specific output heads then branch from the trunk to predict distinct quantities such as pressure and velocity. This architecture reduces redundancy, maintains the inherent couplings between variables, and facilitates efficient training. Its effectiveness is further supported by recent applications of feed-forward neural networks in phase-field fracture modeling---see, for instance, \citet{manav2024phase}---which highlight the advantages of shared representations in capturing complex, physics-driven interactions. In practice, when integrated with residual-based adaptive sampling and adaptive weighting strategies, the shared trunk architecture enables targeted learning in complex, multiscale regions and promotes balanced enforcement of all governing physical constraints.
\end{enumerate}

%---------------------------;
%  An outline of the paper  ;
%---------------------------;
Together, these advances provide a reliable and high-fidelity PINN-based framework for addressing complex DPP problems. Building on this foundation, the remainder of the paper is organized as follows. Section \ref{Sec:S2_APINNs_GE} introduces the governing equations defining the DPP model, which serve as the analytical basis for the subsequent developments. Section \ref{Sec:S3_APINNs_Framework} describes the proposed modeling framework grounded in scientific machine learning principles. Representative numerical results demonstrating the performance and capabilities of the framework are presented in Section \ref{Sec:S5_APINNs_NR}. Section \ref{Sec:S6_APINNs_IF} presents the inversion framework and illustrates its application through a representative boundary value problem for parameter identification. Finally, Section \ref{Sec:S7_APINNs_Closure} concludes the paper with a summary of key findings and a discussion of directions for future research.

    %*********************************************;
%                                             ;
%  NAME                                       ;
%    S2_APINNS_GE.tex                         ;
%                                             ;
%  WRITTEN BY                                 ;
%    Kalyana B. Nakshatrala                   ;
%                                             ;
%*********************************************;
\section{DOUBLE POROSITY/PERMEABILITY MODEL}
\label{Sec:S2_APINNs_GE}

We study the flow of an incompressible fluid through a rigid porous medium featuring two distinct, interconnected pore networks. These networks exchange mass via conduits or fissures, enabling flow between them. Such behavior can be effectively captured using a DPP model. In this work, we adopt the mathematical framework proposed by \cite{nakshatrala2018modeling}, hereafter referred to as the DPP model. We now introduce the necessary notation to present the governing equations of this model.
 
The porous medium is assumed to be a bounded domain $\Omega$  contained in $\mathbb{R}^{nd}$, where ``$nd$'' represents the spatial dimensions. The boundary $\partial\Omega$ is assumed to be piecewise smooth. Formally, $\partial \Omega = \overline{\Omega} \setminus \Omega$, where $\overline{\Omega}$ denotes the set closure of $\Omega$ \citep{evanspartial}. A spatial point is denoted by $\mathbf{x} \in \mathbb{R}^{nd}$. The corresponding spatial divergence and gradient operators are designated as $\mathrm{div}[\cdot]$ and $\mathrm{grad}[\cdot]$, respectively. The outward-directed unit normal vector at the boundary is symbolized by $\widehat{\mathbf{n}}(\mathbf{x})$. 

Henceforth, we label the two pore networks as the macro- and micro-pore networks, and designate the quantities associated with these two networks using parenthesized superscripts $1$ and $2$, respectively. The fluid pressures in the two networks are referred to as the macro- and micro-pressures, and are denoted by $p^{(1)}(\mathbf{x})$ and $p^{(2)}(\mathbf{x})$, respectively. The Darcy (or discharge) velocity vector fields in the macro- and micro-pore networks are represented by $\mathbf{u}^{(1)}(\mathbf{x})$ and $\mathbf{u}^{(2)}(\mathbf{x})$. The intrinsic density and viscosity of the fluid are denoted by $\gamma$ and $\mu$, respectively. For the two pore networks, the corresponding volume fractions are designated as $\phi^{(1)}(\mathbf{x})$ and $\phi^{(2)}(\mathbf{x})$. 

The permeability fields are denoted by $\mathbf{K}^{(1)}(\mathbf{x})$ and $\mathbf{K}^{(2)}(\mathbf{x})$, which are symmetric and, in general, anisotropic and spatially inhomogeneous. However, when the medium is isotropic and spatially homogeneous, the permeabilities take the following form:
%------------------------------------;
%  Equation: Isotropic permeability  ;
%------------------------------------;
\begin{align}
    \mathbf{K}^{(1)}(\mathbf{x}) = k^{(1)} \, \mathbf{I} 
    \quad \mathrm{and} \quad 
    \mathbf{K}^{(2)}(\mathbf{x}) = k^{(2)} \, \mathbf{I} 
\end{align}
where $k^{(1)}$ and $k^{(2)}$ are scalar constants, and $\mathbf{I}$ is the second-order identity tensor. For the mathematical analysis, we assume that the permeability tensors are uniformly elliptic and bounded. Specifically, there exist positive constants $0 < k_{\text{min}} \leq k_{\text{max}} < \infty$ such that
%-----------------------------------------------;
%  Equation: Uniformly elliptic permeabilities  ;
%-----------------------------------------------;
\begin{align}
    \label{Eqn:APIINS_permeabilities_uniformly_elliptic}
    k_{\text{min}} \, \mathbf{y} \bullet \mathbf{y}
    \leq \mathbf{y} \bullet \mathbf{K}^{(i)}(\mathbf{x}) \, \mathbf{y}
    \leq k_{\text{max}}  \, \mathbf{y} \bullet \mathbf{y}
    \quad \forall \mathbf{x} \in \Omega, 
    \forall \mathbf{y} \in \mathbb{R}^{nd}
\end{align}
where $i=1,2$.

For the macro-pore network, we partition the boundary into two parts: $\Gamma_{u}^{(1)}$ and $\Gamma_{p}^{(1)}$. $\Gamma_{u}^{(1)}$ designates that portion of the boundary on which the normal component of the macro-velocity is prescribed, while $\Gamma_{p}^{(1)}$ is the portion on which macro-pressure is prescribed. A similar notation holds for $\Gamma_{u}^{(2)}$ and $\Gamma_{p}^{(2)}$, but for the micro-pore network. For mathematical well-posedness, we require:
%--------------------------------;
%  Equation: Boundary partition  ;
%--------------------------------;
\begin{subequations}
    \label{Eqn:DEM_DPP_well_posedness}
    \begin{align}
        \Gamma_{u}^{(1)} \cup \Gamma_{p}^{(1)} = \partial\Omega \quad \text{and} \quad \Gamma_{u}^{(1)} \cap \Gamma_{p}^{(1)} = \emptyset\\
        \Gamma_{u}^{(2)} \cup \Gamma_{p}^{(2)} = \partial\Omega \quad \text{and} \quad \Gamma_{u}^{(2)} \cap \Gamma_{p}^{(2)} = \emptyset
    \end{align}
\end{subequations}

The governing equations for the DPP model take the following form:
%---------------------;
%  Equation: DPP BVP  ;
%---------------------;
\begin{subequations}
    \begin{alignat}{2}
        \label{Eqn:DEM_DPP_BoLM_1}
        &\mu\left(\mathbf{K}^{(1)}(\mathbf{x})\right)^{-1}\mathbf{u}^{(1)}(\mathbf{x}) 
        +  \mathrm{grad}\big[p^{(1)}(\mathbf{x})\big] 
        - \phi^{(1)}(\mathbf{x}) \, \gamma \, \mathbf{b}(\mathbf{x}) 
        = \mathbf{0}          
        &&  \qquad \mathrm{in} \; \Omega \\
        \label{Eqn:DEM_DPP_BoLM_2}
        &\mu\left(\mathbf{K}^{(2)}(\mathbf{x})\right)^{-1}\mathbf{u}^{(2)}(\mathbf{x}) 
        + \mathrm{grad}\big[p^{(2)}(\mathbf{x})\big] - \phi^{(2)}(\mathbf{x}) \, \gamma \, \mathbf{b}(\mathbf{x}) 
        = \mathbf{0}
        &&  \qquad \mathrm{in} \; \Omega \\
        \label{Eqn:DEM_DPP_BoM_1}
        &\mathrm{div}\big[\mathbf{u}^{(1)}(\mathbf{x})\big] 
        = +\chi(\mathbf{x}) = -\frac{\beta}{\mu} \, \big(p^{(1)}(\mathbf{x}) - p^{(2)}(\mathbf{x})\big)
        &&  \qquad \mathrm{in} \; \Omega \\
        \label{Eqn:DEM_DPP_BoM_2}
        &\mathrm{div}\big[\mathbf{u}^{(2)}(\mathbf{x})\big] = -\chi(\mathbf{x}) 
        = +\frac{\beta}{\mu} \, \big(p^{(1)}(\mathbf{x}) - p^{(2)}(\mathbf{x})\big)
        &&  \qquad \mathrm{in} \; \Omega \\
        \label{Eqn:DEM_DPP_v_BC_1}
        &\mathbf{u}^{(1)}(\mathbf{x}) \bullet \widehat{\mathbf{n}}(\mathbf{x}) = u^{(1)}_{n}(\mathbf{x}) 
       && \qquad \mathrm{on} \; \Gamma_{u}^{(1)} \\
       \label{Eqn:DEM_DPP_v_BC_2}
       &\mathbf{u}^{(2)}(\mathbf{x}) \bullet \widehat{\mathbf{n}}(\mathbf{x}) = u^{(2)}_{n}(\mathbf{x}) 
       && \qquad \mathrm{on} \; \Gamma_{u}^{(2)} \\
       \label{Eqn:DEM_DPP_p_BC_1}
       &p^{(1)}(\mathbf{x}) = p^{(1)}_{\mathrm{p}}(\mathbf{x}) 
       && \qquad \mathrm{on} \; \Gamma_{p}^{(1)} \\
       \label{Eqn:DEM_DPP_p_BC_2}
       &p^{(2)}(\mathbf{x}) = p^{(2)}_{\mathrm{p}}(\mathbf{x}) 
       && \qquad \mathrm{on} \; \Gamma_{p}^{(2)}  
    \end{alignat}
\end{subequations}
where $\mathbf{b}(\mathbf{x})$ is the specific body force, and $u^{(1)}_{n}(\mathbf{x})$ is the normal component of the Darcy velocity prescribed on the boundary of the macro-pore network. Similarly, $u^{(2)}_{n}(\mathbf{x})$ is the prescribed normal component of the velocity on the boundary of the micro-pore network. $p^{(1)}_{\mathrm{p}}(\mathbf{x})$ and $p^{(2)}_{\mathrm{p}}(\mathbf{x})$ are the pressures prescribed on the boundaries of macro- and micro-pore networks, respectively. 

$\chi(\mathbf{x})$ denotes the volumetric rate of fluid exchange, at the spatial point $\mathbf{x}$, from the micro-pore network to the macro-pore network, per unit volume of the porous medium---note the direction of the flow as indicated by the expressions given in Eqs.~\eqref{Eqn:DEM_DPP_BoM_1} and \eqref{Eqn:DEM_DPP_BoM_2}. From this point on, we will simply refer to $\chi(\mathbf{x})$ as the volumetric transfer rate. The rate of mass transfer at a spatial point $\mathbf{x}$ will then be 
\begin{align} 
    m(\mathbf{x}) = \gamma \, \chi(\mathbf{x})
\end{align}

The parameter $\beta$ denotes the inter-porosity mass transfer coefficient, which governs the rate of pressure-driven fluid exchange between the macro- and micro-pore networks and depends on the pore structure and material properties of the porous medium. For further details on the assumptions and derivation of the DPP model, refer to \citet{nakshatrala2018modeling}. 

A brief discussion on the mixed nature of the DPP model and its implications for solver design is warranted. For comparison, consider the classical Darcy equations. Although originally formulated in mixed form, these equations can be reformulated as a single-field diffusion equation expressed solely in terms of pressure. This transformation allows the application of well-established analytical solution methods \citep{strack2017analytical}, conventional computational solvers---such as the single-field pressure-based finite element formulation \citep{allen2021mathematics}, two-point flux approximation \citep{chen2006computational}, and multi-point flux approximation \citep{aavatsmark2002introduction} within the finite volume framework---and even machine learning-based solvers \citep{raissi2017physics, tartakovsky2020physics}, all tailored to diffusion-type problems. In contrast, the governing equations of the DPP model generally cannot be reduced to a pressure-only formulation. Instead, the model must be solved in its mixed form, involving four coupled solution fields. This key difference underscores the inherent complexity in the DPP model compared to the classical Darcy equations.
    %*********************************************;
%                                             ;
%  NAME                                       ;
%    S3_APINNs_Framework.tex                  ;
%                                             ;
%  WRITTEN BY                                 ;
%    Kalyana B. Nakshatrala                   ;
%                                             ;
%*********************************************;
\section{PROPOSED ADAPTIVE PINN-BASED MODELING FRAMEWORK}
\label{Sec:S3_APINNs_Framework}
To address the need for applying machine learning to flow in porous media---particularly those with dual pore networks---we propose a state-of-the-art PINN-based modeling framework. The DPP problem is inherently complex due to the strong interdependence among field variables, which rules out simple single-field formulations. Therefore, our framework embeds these couplings directly into a \emph{multi-field} PINN architecture, enabling the analysis and solution of coupled boundary value problems. To the best of our knowledge, this is the first framework to systematically integrate the essential couplings of DPP models into the network design, thereby improving both the accuracy and robustness of the solutions.

Our discussion in this section is structured as follows. We first introduce a general multi-field problem in an abstract setting, ensuring that the scope extends beyond the DPP model. At selected points, however, we revisit the DPP model to anchor the discussion with concrete examples. We then present the main elements of the proposed modeling framework, summarized in \textbf{Fig.~\ref{Fig:APINNs_NN_architecture}}:
%--------------------------;
%  Items in the framework  ;
%--------------------------;
\begin{enumerate}
    \item[(a)] A shared-trunk neural network architecture with task-specific slim heads, designed to efficiently capture the interactions among multiple fields.
    \item[(b)] Adaptive weighting and adaptive sampling strategies, introduced to enhance convergence and accurately resolve the coupled dynamics.
\end{enumerate}

%-----------------------------;
%  Figure 3: Shared trunk NN  ;
%-----------------------------;
\begin{figure}
    \centering
    \includegraphics[width=\linewidth]{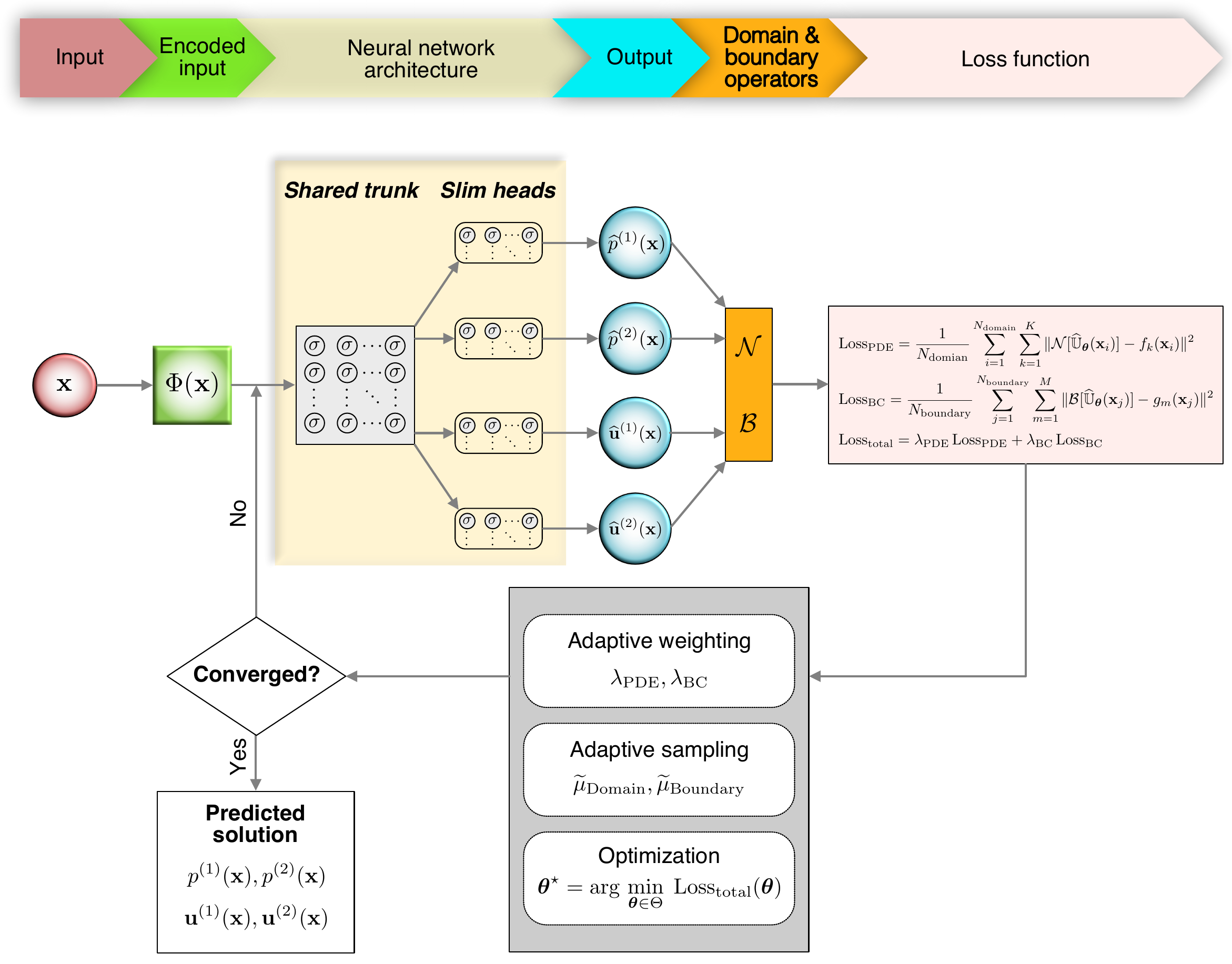}
    \caption{\textsf{Proposed adaptive modeling framework for the DPP model.} The architecture features a shared-trunk, encoded inputs, and adaptive weighting and sampling strategies. \label{Fig:APINNs_NN_architecture}}
\end{figure}

%================================;
%  Subsection: Abstract setting  ;
%================================;
\subsection{Abstract setting}
Consider a mathematical model with $K$ field variables
\begin{align}
    u_k : \Omega \to \mathbb{R}^{m_k}, \quad k = 1, \dots, K
\end{align}
where $m_k \in \mathbb{N}$ specifies the dimension of the $k$-th field variable: $m_k=1$ corresponds to a scalar field, and $m_k = nd$ to a vector field with $nd$ components. For notational convenience, we define the ordered collection of all field variables as
%------------------------------------;
%  Equation: Set of trial functions  ;
%------------------------------------;
\begin{align}
    \label{Eqn:APINNS_set_of_trial_functions}
    \mathbb{U}(\mathbf{x}) = \big(u_1(\mathbf{x}),\, u_2(\mathbf{x}),\, \dots,\, u_K(\mathbf{x}) \big)
\end{align}

Let $\mathcal{U}_k$ denote the function space of $u_k$ (e.g., $L^2(\Omega)$ or $H^1(\Omega)$). The product space
\begin{align}
    \mathcal{U} = \prod_{k=1}^K \mathcal{U}_k
\end{align}
serves as the ambient space for the model, containing all tuples $\mathbb{U} = (u_1, \dots, u_K)$ with $u_k \in \mathcal{U}_k$.  
For each field variable, we associate a trace space $\mathcal{W}_m$, which allows solution fields defined in the (open) domain to be meaningfully extended to the boundary. The combined trace space is then
\begin{align}
    \mathcal{W} = \prod_{m=1}^M \mathcal{W}_m,
\end{align}
containing all tuples of boundary traces $(w_1, \dots, w_M)$ with $w_m \in \mathcal{W}_m$.

%===================================;
%  Subsubsection: Extension to DPP  ;
%-----------------------------------;
\subsubsection{Contextualizing within the DPP model} 
We now interpret the abstract vector of unknown fields in terms of the physical variables of the DPP model:
%-----------------------------------;
%  Equation: DPP contextualization  ;
%-----------------------------------;
\begin{align}
    \mathbb{U}(\mathbf{x}) = \big(p^{(1)}(\mathbf{x}),\, p^{(2)}(\mathbf{x}),\, \mathbf{u}^{(1)}(\mathbf{x}),\, \mathbf{u}^{(2)}(\mathbf{x})\big)
\end{align}
where $p^{(1)}(\mathbf{x})$ and $p^{(2)}(\mathbf{x})$ denote the pressures in the macro- and micro-porous networks, respectively, and $\mathbf{u}^{(1)}(\mathbf{x})$ and $\mathbf{u}^{(2)}(\mathbf{x})$ are the corresponding velocity fields. In terms of field dimensions, we have $K = 4$ with
\begin{align}
    m_1 = 1, \quad m_2 = 1, \quad m_3 = n d, 
    \quad \mathrm{and} \quad m_4 = n d
\end{align}
Recall that ``$nd$'' represents the number of spatial dimensions.

The function space $\mathcal{U}$ for these fields is
%-------------------------------;
%  Equation: mathcal U for DPP  ;
%-------------------------------;
\begin{align}
    \mathcal{U} = H^{1}(\Omega) \times H^{1}(\Omega) \times H(\text{div};\Omega) \times H(\text{div};\Omega)
\end{align}
where the first two components correspond to the pressure fields and the last two to the velocity fields. Specifically, $p^{(1)}(\mathbf{x})$ and $p^{(2)}(\mathbf{x})$ lie in the Sobolev space $H^1(\Omega)$, which consists of square-integrable functions with square-integrable first derivatives, ensuring well-defined pressure gradients for flux computations. The velocity fields, $\mathbf{u}^{(1)}(\mathbf{x})$ and $\mathbf{u}^{(2)}(\mathbf{x})$, belong to the divergence-conforming space $H(\text{div};\Omega)$, containing vector fields whose components and divergence are square-integrable. This ensures that local mass conservation in each network is satisfied and that the divergence of velocities is properly defined.

The combined trace space for the DPP model is defined as
%----------------------------------;
%  Equation: Combined trace space  ;
%----------------------------------;
\begin{align}
    \mathcal{W} = H^{1/2}(\partial \Omega) 
    \times H^{1/2}(\partial \Omega) 
    \times H^{-1/2}(\partial \Omega) 
    \times H^{-1/2}(\partial \Omega)
\end{align}
where the first two components correspond to pressure traces and the last two to velocity traces. For velocity fields, the boundary condition is specified in terms of the normal component
\[
\mathbf{u}^{(i)}(\mathbf{x}) \bullet \widehat{\mathbf{n}}(\mathbf{x})
\]
which represents the flux through the boundary. This normal component naturally lies in $H^{-1/2}(\partial \Omega)$, the dual space of $H^{1/2}(\partial \Omega)$, ensuring that the flux is well-defined in the variational sense. 

For detailed discussions on functional spaces in the context of the DPP model, see \cite{joodat2018modeling}. For a broader introduction to functional analysis and related topics, refer to standard texts such as \citet{brezis2011functional, evanspartial, brezzi2012mixed, adams2003sobolev}.

%-----------------------------------;
%  Remark: Validity of weak spaces  ;
%-----------------------------------;
\begin{remark}
    The above functional spaces are defined in the weak sense, which remains consistent with the PINN-based framework. Although the PINN enforces the governing equations through point-wise collocation of the strong-form residuals, the neural network approximation generally attains Sobolev-like regularity (e.g., $H^1(\Omega)$) \citep{wang2024piratenets, katende2025stability}. This regularity allows the solution to be interpreted naturally within a weak framework. Moreover, the weak formulation mirrors the underlying variational principles of the governing equations, ensuring a mathematically consistent interpretation, as elaborated in the convergence analysis.
\end{remark}

%===============================;
%  Subsubsection: Abstract BVP  ;
%-------------------------------;
\subsubsection{Abstract boundary value problem}
We recast the boundary value problem in the following abstract setting: 
\begin{subequations}
    \begin{alignat}{2}
        % Abstract GE 
        \label{Eqn:APINNS_Abstarct_GE}
        &\mathcal{N}[\mathbb{U}(\mathbf{x})] = \mathbf{f} &&\quad \text{in} \,\, \Omega\\
        % Abstract BC 
        \label{Eqn:APINNS_Abstarct_BC}
        &\mathcal{B}[\mathbb{U}(\mathbf{x})] = \mathbf{g} &&\quad \text{on} \,\, \partial\Omega
    \end{alignat}
\end{subequations}

The operator $\mathcal{N}$ is vector-valued can be decomposed as following:
\begin{align}
      \mathcal{N}[\mathbb{U}(\mathbf{x})] 
      = \begin{bmatrix}
      \mathcal{N}_1\big(u_1(\mathbf{x}), \cdots, u_K(\mathbf{x})\big) \\
      \mathcal{N}_2\big(u_1(\mathbf{x}), \cdots, u_K(\mathbf{x})\big) \\
      \vdots \\
      \mathcal{N}_K\big(u_1(\mathbf{x}), \cdots, u_K(\mathbf{x})\big)
  \end{bmatrix}
\end{align}
Typically, $\mathcal{N}_k$ can be expressed as following:
\begin{align}
    \mathcal{N}_{k}[\mathbb{U}(\mathbf{x})] =  \underbrace{\sum_{j=1}^K \mathcal{A}_{kj}(u_j)}_{\mathcal{L}_k(\mathbb{U})} + \mathcal{F}_k(\mathbb{U})
\end{align}
where $\mathcal{L}_k(\mathbb{U})$ denotes the linear contribution, with differential operator $\mathcal{A}_{kj}$, while $\mathcal{F}_k$ represents the nonlinear / coupling part.  The source term $\mathbf{f}$ is given as $\mathbf{f} = (f_1, \dots, f_K)$.

Likewise, the boundary operator $\mathcal{B}$ can be decomposed as follows:
\begin{align}
      \mathcal{B}[\mathbb{U}(\mathbf{x})] 
      = \begin{bmatrix}
  \mathcal{B}_1\big(u_1(\mathbf{x}),\cdots,u_K(\mathbf{x})\big) \\
  \vdots \\
  \mathcal{B}_M\big(u_1(\mathbf{x}),\cdots,u_K(\mathbf{x})\big)
  \end{bmatrix}
\end{align}
Each $\mathcal{B}_m$ ($m = 1, \cdots, M)$ represents a boundary condition on a portion of $\partial\Omega$.  For example, if a Dirichlet boundary condition is imposed on $u_k$ at $\partial \Omega_m$, then $\mathcal{B}_m(\mathbb{U}) = u_k|_{\partial \Omega_m}$. For a Neumann boundary condition, $\mathcal{B}_m(\mathbb{U}) = \frac{\partial u_k}{\partial \widehat{\mathbf{n}}}|_{\partial \Omega_m}$, where $\widehat{\mathbf{n}}$ is the outward normal. 

%===========================================;
%  Subsection: Neural network architecture  ;
%===========================================;
\subsection{Neural network architecture}
Having established the abstract boundary value problem in Eqs.~\eqref{Eqn:APINNS_Abstarct_GE}--\eqref{Eqn:APINNS_Abstarct_BC}, we next introduce a neural representation of the solution fields that respects both the product structure and the regularity requirements of the problem.

%============================================;
%  Subsection: Neural network approximation  ;
%--------------------------------------------;
\subsubsection{Neural network approximation} 
We then define a neural network surrogate:
\begin{align}
\label{Eqn:NN_approximation}
    \hat{\mathbb{U}}_{\boldsymbol{\theta}}:\Omega \to \prod_{k=1}^{K}\mathbb{R}^{m_k}
\end{align}
parameterized by the trainable vector $\boldsymbol{\theta}$. The neural network $\hat{\mathbb{U}}_{\boldsymbol{\theta}}$ is designed to be smooth enough so that all derivatives required by Eqs.~\eqref{Eqn:APINNS_Abstarct_GE}--\eqref{Eqn:APINNS_Abstarct_BC} can be computed. 

The parameters $\boldsymbol{\theta}$ are obtained by minimizing a physics-informed objective that combines the residuals of the governing equations with deviations from the prescribed boundary conditions. When available, observational data can be incorporated through additional data-fidelity terms, and regularization on $\boldsymbol{\theta}$ can be included without altering the formulation. Automatic differentiation of $\hat{\mathbb{U}}_{\boldsymbol{\theta}}$ yields the required differential operators for end-to-end training while preserving inter-field couplings across the product space $\prod_{k=1}^{K}\mathbb{R}^{m_k}$.

Smooth coordinate-to-field surrogates, as in Eq.~\eqref{Eqn:NN_approximation}, are known to exhibit a low–frequency bias, whereas PDE solutions often display multi–scale structures, such as sharp layers, boundary or interior layers, and heterogeneous coefficients. Direct optimization in the ambient input space can therefore be ill-conditioned and sample-inefficient. To address this, we introduce a \emph{feature encoder} that lifts the inputs into a higher-dimensional, multi–scale feature space, where both high- and low-frequency content are simultaneously accessible to a moderate-size network. This transformation preserves the differentiability required by Eqs.~\eqref{Eqn:APINNS_Abstarct_GE}--\eqref{Eqn:APINNS_Abstarct_BC}, while improving training efficiency and accuracy for complex PDE solutions.

%==========================================;
%  Subsubsection: Fourier feature encoder  ;
%==========================================;
\subsubsection{Fourier feature encoder}
Multiple strategies have been proposed to enrich the representation space for PDE learning, ranging from \emph{data-driven latent manifolds} constructed from simulation data to \emph{coordinate-based embeddings} defined directly on the input domain. In the data-driven setting, \citet{kim2022fast} demonstrated that autoencoder architectures can construct latent manifolds that act as frequency-like bases learned from simulations. In this work, following the coordinate-based paradigm, we employ a randomized Fourier feature mapping
\begin{equation}
    \Phi: \mathbb{R}^{d} \to \mathbb{R}^{D}
\end{equation}
where $d$ denotes the dimensionality of the physical input domain (e.g., spatial or spatiotemporal coordinates), and $D$ represents the higher-dimensional embedding space that captures richer frequency information---as introduced by \citet{tancik2020fourier}.

Let \(\mathbf{L}=(L_1,\dots,L_d)\) denote the characteristic domain scales along each coordinate. For any \(\mathbf{x}\in\Omega\), we first normalize
\begin{align}
    \hat{\mathbf{x}} = \Big(\frac{x_1}{L_1}, \dots, \frac{x_d}{L_d}\Big)
\end{align}
We then draw a bank of \(n_{\mathrm{freq}}\) random frequencies \(\mathbf{B}\in\mathbb{R}^{n_{\mathrm{freq}}\times d}\) with independent entries \(B_{ij}\sim\mathcal{N}(0,\tau^2)\) (i.e., a normal distribution with zero mean and $\tau$ standard deviation) and define the phase vector:
\begin{align}
    \boldsymbol{\varphi}(\hat{\mathbf{x}}) = 2\pi\,\mathbf{B}\,\hat{\mathbf{x}} \;\in\; \mathbb{R}^{n_{\mathrm{freq}}}
\end{align}
The encoder \(\Phi\) maps \(\mathbf{x}\) to the concatenation of low- and high-frequency coordinates:
\begin{align}
    \Phi(\mathbf{x}) = \Big[\;\hat{\mathbf{x}} \;;\; \sin(\boldsymbol{\varphi}(\hat{\mathbf{x}})) \;;\; \cos(\boldsymbol{\varphi}(\hat{\mathbf{x}})) \;\Big] \;\in\; \mathbb{R}^{D}
\end{align}
where 
\begin{align}
    D = d + 2\,n_{\mathrm{freq}}
\end{align}

For planar problems (\(d=2\)), the parameter \(\tau>0\) controls the frequency spread, and we retain the raw $\hat{\mathbf{x}}$ block (``include input'') to stabilize low-frequency trends and improve conditioning.\footnote{Fourier encoding $\Phi(\mathbf{x})$ mitigates the low-frequency (``spectral'') bias of smooth MLPs by injecting multi-scale $\sin/\cos$ features. This allows a moderate trunk $\mathcal{T}$ to capture sharp layers and fine gradients. In the shared-trunk/slim-heads design, these multiscale features are learned once by $\mathcal{T}$ and then reused by each head $\mathcal{H}_k$, improving sample efficiency and cross-field consistency.}

%=========================================;
%  Subsection: Shared-trunk architecture  ;
%=========================================;
\subsubsection{Shared-trunk architecture with slim heads}
With the Fourier encoder $\Phi(\mathbf{x})$ established, we employ a shared-trunk neural architecture designed to construct a unified latent representation of the domain, accessible to all field variables. Task-specific lightweight heads then map this latent representation to their respective outputs, reducing the number of trainable parameters and the computational cost of derivatives, while simultaneously promoting consistency and coherence across the different fields. Unlike training independent networks for each field, this architecture leverages shared structural information, improving data efficiency and generalization.

To formalize the shared-trunk component of the architecture, consider the encoded input 
\(\Phi(\mathbf{x}) \in \mathbb{R}^{D}\). The trunk maps this input to a latent vector 
\(\mathbf{z} \in \mathbb{R}^{H}\) through a depth-\(L\) composition:
\begin{align}
    \mathcal{T}:\mathbb{R}^{D} \to \mathbb{R}^{H}
    \quad \mathrm{where} \quad 
    \mathcal{T} = \mathcal{T}_{L} \circ \sigma \circ \mathcal{T}_{L-1} \circ \cdots \circ \sigma \circ \mathcal{T}_{1}
\end{align}
Each layer \(\mathcal{T}_{\ell} : \mathbb{R}^{H_{\ell-1}} \to \mathbb{R}^{H_{\ell}}\) is affine:
\begin{align}
    \mathcal{T}_{\ell}(\mathbf{v}) = \mathbf{W}_{\ell} \mathbf{v} + \mathbf{b}_{\ell}
\end{align}
where \(\mathbf{v}\) is the input to layer \(\ell\), \(\mathbf{W}_{\ell} \in \mathbb{R}^{H_{\ell} \times H_{\ell-1}}\), and \(\mathbf{b}_{\ell} \in \mathbb{R}^{H_{\ell}}\). The dimensions satisfy \(H_0 = D\) and \(H_\ell = H\) for \(\ell = 1,\dots,L\). An element-wise activation \(\sigma : \mathbb{R} \to \mathbb{R}\) is applied between layers, with no activation after \(\mathcal{T}_L\) unless explicitly stated. The resulting shared latent state is
\begin{align}
    \mathbf{z} = \mathcal{T}\big(\Phi(\mathbf{x})\big) \in \mathbb{R}^{H}
\end{align}

For each field \(u_k(\mathbf{x})\), \(k=1,\dots,K\), a slim head \(\mathcal{H}_k : \mathbb{R}^{H} \to \mathbb{R}^{m_k}\) maps \(\mathbf{z}\) to the codomain of the \(k\)-th field:
\begin{align}
    \mathcal{H}_k(\mathbf{z}) = \mathbf{W}_k^{\text{head}} \mathbf{z} + \mathbf{b}_k^{\text{head}}
\end{align}
with \(\mathbf{W}_k^{\text{head}} \in \mathbb{R}^{m_k \times H}\) and \(\mathbf{b}_k^{\text{head}} \in \mathbb{R}^{m_k}\). This slim-head design minimizes additional parameters and enables efficient derivative evaluation.

The neural network approximation of the field variables is given by
\begin{align}
    \label{Eqn:APINNS_NN_approximation_Trunk}
    \hat{\mathbb{U}}_{\boldsymbol{\theta}}(\mathbf{x})
    = \big(\hat{u}_{1}(\mathbf{x}), \ldots, \hat{u}_{K}(\mathbf{x})\big)
\end{align}
where each \(\hat{u}_k\) is obtained by applying a head network \(\mathcal{H}_k\) to the shared-trunk representation \(\mathcal{T}(\Phi(\mathbf{x}))\):
\begin{align}
    \label{Eqn:APINNS_NN_approximation_Trunk_Component}
    \hat{u}_{k}(\mathbf{x}) = \mathcal{H}_{k}\!\big(\mathcal{T}(\Phi(\mathbf{x}))\big)
\end{align}

The set of trainable parameters, \(\boldsymbol{\theta}\), includes all weights and biases from both the trunk \(\mathcal{T}\) and the heads \(\{\mathcal{H}_{k}\}_{k=1}^{K}\):
\begin{align}
    \boldsymbol{\theta} = 
    \Big(\{\mathbf{W}_{\ell}, \mathbf{b}_{\ell}\}_{\ell=1}^{L}, \ 
          \{\mathbf{W}_{k}^{\text{head}}, \mathbf{b}_{k}^{\text{head}}\}_{k=1}^{K}\Big)
\end{align}
By concatenating the outputs into a single vector 
\(\hat{\mathbf{u}}(\mathbf{x}) \in \mathbb{R}^{m_{\mathrm{tot}}}\), 
the network output can be written in a compact affine form:
\begin{align}
    \hat{\mathbf{u}}(\mathbf{x}) = \mathbf{A}\,\mathbf{z}(\mathbf{x}) + \mathbf{b}
\end{align}
where \(\mathbf{z}(\mathbf{x}) = \mathcal{T}(\Phi(\mathbf{x}))\),  
\(\mathbf{A} = \mathrm{blkdiag}(\mathbf{W}_{1}^{\text{head}}, \dots, \mathbf{W}_{K}^{\text{head}})\) is a block-diagonal matrix of head weights, and 
\(\mathbf{b} = (\mathbf{b}_{1}^{\text{head}}, \dots, \mathbf{b}_{K}^{\text{head}})\) collects the biases.  
Finally, the overall dimension of the network output, accounting for all fields, is
\begin{align}
    m_{\mathrm{tot}} = \sum_{k=1}^{K} m_{k}
\end{align}

%========================================================;
%  Subsubsection: Discretization and training objective  ;
%--------------------------------------------------------;
\subsubsection{Discretization and training objective} 
As mentioned earlier, the neural network defined in Eq.~\eqref{Eqn:APINNS_NN_approximation_Trunk} belongs to a finite-dimensional subspace $\mathcal{U}_{\boldsymbol{\theta}} \subset \mathcal{U}$, where $\mathcal{U}_{\boldsymbol{\theta}}$ denotes the space of functions representable by the network architecture parameterized by $\boldsymbol{\theta}$. The objective is to determine $\boldsymbol{\theta}$ such that the network approximation $\hat{\mathbb{U}}_{\boldsymbol{\theta}}(\mathbf{x})$ closely matches the true solution $\mathbb{U}(\mathbf{x}) \in \mathcal{U}$ by satisfying the following abstract boundary value problem:
\begin{subequations}
    \begin{alignat}{2}
        \label{Eqn:APINNS_NN_Abstarct_GE}
        &\mathcal{N}[\hat{\mathbb{U}}_{\boldsymbol{\theta}}(\mathbf{x})] = \mathbf{f}(\mathbf{x}) 
        &&\quad \text{in } \Omega, \\
        \label{Eqn:APINNS_NN_Abstarct_BC}
        &\mathcal{B}[\hat{\mathbb{U}}_{\boldsymbol{\theta}}(\mathbf{x})] = \mathbf{g}(\mathbf{x}) 
        &&\quad \text{on } \partial\Omega.
    \end{alignat}
\end{subequations}

For training, a finite set of collocation points is defined as follows:  
$\{ \mathbf{x}_i \}_{i=1}^{N_{\text{domain}}} \subset \Omega$ are points within the domain, and  
$\{ \mathbf{x}_j \}_{j=1}^{N_{\text{boundary}}} \subset \partial\Omega$ are points on the boundary.  

Using these collocation points, the loss function is constructed based on the governing equations given in Eqs.~(\ref{Eqn:APINNS_NN_Abstarct_GE})--(\ref{Eqn:APINNS_NN_Abstarct_BC}):
\begin{align}
    \label{Eqn:Abstarct_Total_Loss}
    \mathrm{Loss}_{\text{total}}(\boldsymbol{\theta}) 
    &= \lambda_{\text{PDE}} \, \mathrm{Loss}_{\text{PDE}}(\boldsymbol{\theta})
    + \lambda_{\text{BC}} \, \mathrm{Loss}_{\text{BC}}(\boldsymbol{\theta}),
\end{align}
where $\lambda_{\text{PDE}}$ and $\lambda_{\text{BC}}$ are positive weighting factors that balance the contributions of the PDE and boundary residuals, respectively. These contributions are defined as
\begin{subequations}
    \begin{alignat}{2}
        \label{Eqn:APINNS_NN_Abstarct_GE_Loss}
        &\mathrm{Loss}_{\text{PDE}}(\boldsymbol{\theta}) 
        = \frac{1}{N_{\text{domain}}} 
        \sum_{i=1}^{N_{\text{domain}}} 
        \sum_{k=1}^{K} 
        \big\| \mathcal{N}_{k}[\hat{\mathbb{U}}_{\boldsymbol{\theta}}(\mathbf{x}_i)] - f_k(\mathbf{x}_i) \big\|^2, \\
        \label{Eqn:APINNS_NN_Abstarct_BC_Loss}
        &\mathrm{Loss}_{\text{BC}}(\boldsymbol{\theta}) 
        = \frac{1}{N_{\text{boundary}}} 
        \sum_{j=1}^{N_{\text{boundary}}} 
        \sum_{m=1}^{M} 
        \big\| \mathcal{B}_{m}[\hat{\mathbb{U}}_{\boldsymbol{\theta}}(\mathbf{x}_j)] - g_m(\mathbf{x}_j) \big\|^2.
    \end{alignat}
\end{subequations}
The differential operators $\mathcal{N}$ and $\mathcal{B}$ are evaluated through automatic differentiation~\citep{paszke2017automatic, ketkar2021deep}.  

Given the neural network approximation in Eq.~\eqref{Eqn:NN_approximation} and the total loss in Eq.~\eqref{Eqn:Abstarct_Total_Loss}, the next step is to optimize the trainable parameters $\boldsymbol{\theta}$---comprising all weights and biases of the shared-trunk $\mathcal{T}$ and the task-specific heads $\{ \mathcal{H}_k \}_{k=1}^{K}$---to minimize the total loss. This optimization reduces the residuals of the governing PDEs and boundary conditions at the collocation points. The parameters $\boldsymbol{\theta}$ lie in the parameter space $\Theta \subseteq \mathbb{R}^p$, where $p$ denotes the total number of trainable weights and biases. Optimization can be performed using techniques such as gradient descent, momentum-based stochastic gradient descent, or quasi-Newton methods.

%==============================;
%  Subsubsection: DPP framing  ;
%------------------------------;
\subsubsection{Framing in the DPP context} Before establishing the optimization procedure, we frame the loss function under the DPP model (i.e., Eqs.~\eqref{Eqn:DEM_DPP_BoLM_1}--\eqref{Eqn:DEM_DPP_p_BC_2}). The loss due to the partial differential equations (excluding the boundary conditions) takes the following form: 
\begin{equation}
\mathrm{Loss}_{\text{PDE}}(\boldsymbol{\theta}) = \frac{1}{N_{\text{domain}}} \sum_{i=1}^{N_{\text{domain}}} 
\Big( 
    \|R_{1}(\mathbf{x}_i)\|_2^2 
  + \|R_{2}(\mathbf{x}_i)\|_2^2 
  + |R_{3}(\mathbf{x}_i)|^2 
  + |R_{4}(\mathbf{x}_i)|^2 
\Big)
\end{equation}
where $\|\cdot\|_{2}$ denote the 2-norm. The individual residuals are: 
\begin{subequations}
    \begin{align}
        R_{1}(\mathbf{x}) &= \mu \, \mathbf{K}^{-1}_{1}\hat{\mathbf{u}}^{(1)}(\mathbf{x}) + \text{grad}\, [\hat{p}^{(1)}(\mathbf{x})] - \phi^{(1)}(\mathbf{x}) \, \gamma \, \mathbf{b}(\mathbf{x}) \\
       R_{2}(\mathbf{x}) &= \mu \, \mathbf{K}^{-1}_{2}\hat{\mathbf{u}}^{(2)}(\mathbf{x}) + \text{grad}\, [\hat{p}^{(2)}(\mathbf{x})] - \phi^{(2)}(\mathbf{x}) \, \gamma \, \mathbf{b}(\mathbf{x}) \\
        R_{3}(\mathbf{x}) &= \text{div}\, [\hat{\mathbf{u}}^{(1)}(\mathbf{x})] + \frac{\beta}{\mu} \left( \hat{p}^{(1)}(\mathbf{x}) - \hat{p}^{(2)}(\mathbf{x}) \right) \\
       R_{4}(\mathbf{x}) &= \text{div}\, [\hat{\mathbf{u}}^{(2)}(\mathbf{x})] -\frac{\beta}{\mu} \left( \hat{p}^{(1)}(\mathbf{x}) - \hat{p}^{(2)}(\mathbf{x}) \right)  
    \end{align}
\end{subequations}
The loss function due to the boundary conditions take the following form:
\begin{align}
    \mathrm{Loss}_{\text{BC}}(\boldsymbol{\theta}) 
    &= \frac{1}{N_{\text{boundary}}} \sum_{j=1}^{N_{\text{boundary}}} \Bigg( 
    \sum_{m \in \mathcal{M}_{D}} \left[\left( \hat{p}^{(1)}(\mathbf{x}_j) 
    - g_{m, p^{(1)}}(\mathbf{x}_j) \right)^2 + \left( \hat{p}^{(2)}(\mathbf{x}_j) 
    - g_{m, p^{(2)}}(\mathbf{x}_j) \right)^2 \right] \nonumber \\
    &\hspace{0.5in} + \sum_{m \in \mathcal{M}_N} \left[ 
    \left( \hat{\mathbf{u}}^{(1)}(\mathbf{x}_j) \bullet \widehat{\mathbf{n}}_j 
    - g_{m,u^{(1)}} \right)^2 
    + \left( \hat{\mathbf{u}}^{(2)}(\mathbf{x}_j) \bullet \widehat{\mathbf{n}}_j 
    - g_{m, u^{(2)}}\right)^2 \right]\Bigg)
\end{align}

%========================================;
%  Subsubsection: Optimization strategy  ;
%----------------------------------------;
\subsubsection{Optimization strategy}
The optimization problem reads:
%----------------------------------;
%  Equation: Optimization problem  ;
%----------------------------------;
\begin{align}
    \label{Eqn:NN_Optimization_def}
    \boldsymbol{\theta}^{\star} = \arg\min_{\boldsymbol{\theta} \in \Theta} 
    \; \; \mathrm{Loss}_{\text{total}}(\boldsymbol{\theta})
\end{align}
The objective is to determine the optimal parameter vector $\boldsymbol{\theta}^{\star}$ that minimizes the total loss function $\mathrm{Loss}_{\mathrm{total}}(\boldsymbol{\theta})$. The optimization problem is solved as a two-stage procedure: We initially employ the Adam optimizer---a first-order method that adapts the learning rate using exponential moving averages of gradients and their squares \citep{kingma2014adam}. Following the Adam optimizer, the L-BFGS algorithm---a quasi-Newton method---is employed as a polishing step to refine the solution \citep{seo2024solving}.

%===================================;
%  Subsection: Adaptive strategies  ;
%===================================;
\subsection{Adaptive strategies}
The framework avails two adaptive strategies that enhance optimization by dynamically adjusting loss weights and collocation points. These methods balance PDE and boundary losses, and concentrate computation on high-residual regions, improving efficiency in complex multi-field systems.

%===================================;
%  Subsection: Adaptive strategies  ;
%-----------------------------------;
\subsubsection{Adaptive weighting}  
We combine the task-specific losses using initial weights $\lambda_j$ to define the total loss:
\begin{align}
    \text{Loss}_{\text{total}}(\boldsymbol{\theta}) = \sum_{j \in \mathcal{A}} \lambda_j \, \text{Loss}_j(\boldsymbol{\theta})
\end{align}
where $\boldsymbol{\theta}$ denotes the trainable parameters, $\mathcal{A}$ is the set of tasks (i.e., PDE, BC), and $\lambda_j>0$ are adaptive weights.

To balance optimization across tasks, these weights are adjusted dynamically based on the relative convergence rates of each task. The fractional loss reduction for task $j$ at iteration $n$ is defined as
\begin{align}
\label{Eqn: APINNS_Convergence_rates}
    S_j^{(n)} = \max\left(\frac{\mathrm{Loss}_j(\boldsymbol{\theta}_{n-1}) - \mathrm{Loss}_j(\boldsymbol{\theta}_n)}{\mathrm{Loss}_j(\boldsymbol{\theta}_{n-1}) + \delta},\, 0\right)
\end{align}
where a small stabilizer $\delta = 10^{-12}$ is included. Intuitively, $S_j^{(n)} = 0$ indicates a non-monotone step (i.e., the loss increased), which is counted as zero progress.
To reduce stochastic noise, these convergence rates are smoothed using a Simple Moving Average (SMA):
\begin{align}
    \left(\tilde{S}_j^{\text{SMA}}\right)^{(n)} = \frac{1}{W} \sum_{i=n-W+1}^n S_j^{(i)}
\end{align}
where $W$ is the SMA window size. We then compute the maximum and minimum averaged convergence rates across tasks:
\begin{align}
    s_{\max}^{(n)} = \max_{j \in \mathcal{A}} \tilde{S}_j^{(n)}
    \quad \mathrm{and} \quad 
    s_{\min}^{(n)} = \min_{j \in \mathcal{A}} \tilde{S}_j^{(n)}
\end{align}  

The reweighting mechanism is activated only after each task has accumulated a full window of $W$ samples and the spread of convergence rates is significant:
\begin{align}
    \frac{s_{\max}^{(n)}}{\max(s_{\min}^{(n)}, \varepsilon)} > \rho
\end{align}  
where $\varepsilon>0$ prevents division by zero and $\rho$ is a user-chosen threshold. Once activated, the weight for each task $j \in \mathcal{A}$ is updated as
\begin{align}
    \lambda_j^{(n+1)} = 1 + \alpha \, \frac{s_{\max}^{(n)} - \tilde{S}_j^{(n)}}{s_{\max}^{(n)} - s_{\min}^{(n)}}
\end{align}
with $\alpha > 0$ a scaling parameter, which is specified by the user. 
Tasks that achieve the maximal averaged convergence rate retain the baseline weight $\lambda_j^{(n+1)} = 1$. Tasks with slower convergence receive larger weights:
\begin{align}
    1 < \lambda_j^{(n+1)} \le 1 + \alpha
\end{align}
with the lower bound reached when $\tilde{S}_j^{(n)} = s_{\max}^{(n)}$ and the upper bound $\lambda_j^{(n+1)} = 1 + \alpha$ when $\tilde{S}_j^{(n)} = s_{\min}^{(n)}$.

The computer implementation is provided in the appendix (\S\ref{Subsec:APINNS_Adaptive_weighting}).

%====================================;
%  Subsubsection: Adaptive sampling  ;
%====================================;
\subsubsection{Adaptive sampling} 
Adaptive sampling is a round-end enrichment strategy that augments the collocation set by adding points in regions where the current network approximation exhibits large residuals. Training proceeds in refinement rounds: at the end of each round $r$, we take the current approximation $\hat{\mathbb{U}}_{\boldsymbol{\theta}^{(r)}} $ and draw a random pool of candidates from the domain and the boundary, we then evaluate the point-wise residual of the training objective at these candidates, rank them, admit a fixed number of top-scoring points under a simple capacity cap, and continue optimization on the enlarged collocation set.

Formally, the interior and boundary collocation sets at round $r$ are denoted by $\mathcal{X}^{(r)}_{\mathrm{int}} \subset \Omega$ and $\mathcal{X}^{(r)}_{\mathrm{bnd}} \subset \partial \Omega$. The corresponding residual indicators are defined as
\begin{align}
    &\eta_{\mathrm{int}}^{(r)}(\mathbf{x}) 
    = \Bigg(\sum_{k=1}^K \big\|\mathcal{N}_k[\hat{\mathbb{U}}_{\boldsymbol{\theta}^{(r)}}](\mathbf{x}) - f_k(\mathbf{x}) \big\|_2^2 \Bigg)^{1/2}, 
    && \mathbf{x} \in \Omega \\
    &\eta_{\mathrm{bnd}}^{(r)}(\mathbf{x}) 
    = \Bigg(\sum_{m=1}^M \big\|\mathcal{B}_m[\hat{\mathbb{U}}_{\boldsymbol{\theta}^{(r)}}](\mathbf{x}) - g_m(\mathbf{x}) \big\|_2^2 \Bigg)^{1/2}, 
    && \mathbf{x} \in \partial \Omega
\end{align}

At each round, we generate a candidate pool of interior collocation points. Let $\kappa \in \mathbb{N}$ be the number of points to admit per round, and let $\gamma > 1$ be an oversampling factor that controls the size of the candidate pool relative to $\kappa$. Specifically, we sample
\begin{align}
    M_n = \gamma \kappa
\end{align}
points from the domain, forming the candidate set
\begin{align}
    \mathcal{X}_{\mathrm{cand}}^{(r)} = \{\tilde{\mathbf{x}}_1, \dots, \tilde{\mathbf{x}}_{M_n}\} \subset \Omega
\end{align}
For each candidate point, we compute the corresponding residual indicator:
\begin{align}
    v_\ell = \eta_{\mathrm{int}}^{(r)}(\tilde{\mathbf{x}}_\ell)
\end{align}
for indices
\begin{align}
    \ell = 1, \dots, M_n
\end{align}
The candidate points are then ranked according to these residual indicators, and the top $\kappa$ points with the largest values are selected for inclusion in the collocation set:
\begin{align}
    \mathcal{S}^{(r)}_{\mathrm{int}}
    = \operatorname{Top}_{\kappa}\!\big(\{v_\ell\}_{\ell=1}^{M_n}\big)
\end{align}
Finally, the interior collocation set is updated by appending the selected points:
\begin{align}
    \mathcal{X}^{(r+1)}_{\mathrm{int}} 
    = \mathcal{X}^{(r)}_{\mathrm{int}} \cup \mathcal{S}^{(r)}_{\mathrm{int}}
\end{align}

The boundary collocation set can be treated analogously. In this work, however, the boundary points are fixed throughout training: 
\begin{align}
    \mathcal{X}_{\mathrm{bnd}}^{(r+1)} = \mathcal{X}_{\mathrm{bnd}}^{(r)}
\end{align}

For computer implementation details, see the code snippet provided in the appendix (\S\ref{Subsec:APINNS_Adaptive_sampling}).
    %*********************************************;
%                                             ;
%  NAME                                       ;
%    S4_APINNs_Convergence.tex                ;
%                                             ;
%  WRITTEN BY                                 ;
%    Kalyana B. Nakshatrala                   ;
%                                             ;
%*********************************************;
\section{CONVERGENCE ANALYSIS}
\label{Sec:S4_APINNs_Convergence}

We adopt the standard abbreviation ``a.e." for almost everywhere. A function is said to vanish almost everywhere if it is equal to zero except on a set of measure zero \citep{Axler2020_MeasureIntegrationRealAnalysis}.
For the sake of completeness, we now enumerate the assumptions underpinning the subsequent mathematical analysis. Although some have appeared previously, they are restated here in consolidated form.
%-------------------------;
%  Enumerate assumptions  ;
%-------------------------;
\begin{enumerate}
    \item[(i)] $\Omega \subset \mathbb{R}^{nd}$ is a bounded Lipschitz domain.
    \item[(ii)] The material parameters satisfy $\mu > 0$ and $\beta \ge 0$.
    \item[(iii)] The permeabilities are essentially bounded (i.e., $\mathbf{K}^{(i)}(\mathbf{x})\in L^{\infty}(\Omega;\mathbb{R}^{nd\times nd}))$ and uniformly elliptic (cf. Eq.~\eqref{Eqn:APIINS_permeabilities_uniformly_elliptic}). Furthermore, the permeabilities are symmetric. 
    \item[(iv)] The pressure is prescribed on a subset of the boundary of positive measure for at least one pore network: either $\operatorname{meas}(\Gamma^{(1)}_p) > 0$ or $\operatorname{meas}(\Gamma^{(2)}_p) > 0$. This condition fixes the pressure datum.
\end{enumerate}

%============================================;
%  Subsection: Infinite-dimensional setting  ;
%============================================;
\subsection{Infinite-dimensional setting} To carry out the convergence analysis, we first introduce an infinite-dimensional analogue of the discrete optimization problem \eqref{Eqn:NN_Optimization_def} presented in the preceding section. For clarity and conciseness, we introduce the notation for the operators defining linear momentum and mass balance (i.e., the continuity equation). 

We define the \emph{linear momentum operator} associated with the $i$-th pore network as follows: 
%-------------------------------;
%  Equation: Momentum operator  ;
%-------------------------------;
\begin{align}
    \label{Eqn:APINNS_Momentum_operator}
    \mathcal{M}^{(i)}\big(p^{(i)}(\mathbf{x}),\mathbf{u}^{(i)}(\mathbf{x})\big) 
    := \mu \, \big(\mathbf{K}^{(i)}(\mathbf{x}) \big)^{-1} 
    \mathbf{u}^{(i)}(\mathbf{x}) + \mathrm{grad}\big[p^{(i)}(\mathbf{x})\big]
\end{align}
Then, the corresponding residual of the linear momentum equation is given by
%-------------------------------;
%  Equation: Momentum residual  ;
%-------------------------------;
\begin{align}
    \label{Eqn:APINNS_Momentum_residual}
    \mathbf{R}_{1}^{(i)}(\mathbf{x}) = \mathcal{M}^{(i)}\big(p^{(i)}(\mathbf{x}),\mathbf{u}^{(i)}(\mathbf{x})\big)
    - \phi^{(i)}(\mathbf{x}) \, \gamma \, \mathbf{b}(\mathbf{x})
\end{align}
Likewise, we introduce the \emph{continuity operator} for the $i$-th pore network\footnote{The sign of the transfer term is positive for the first pore network and negative for the second. Using the symbol $\pm$ within the continuity operator or in the mass-balance residual should therefore not cause any ambiguity.}:
%---------------------------------;
%  Equation: Continuity operator  ;
%---------------------------------;
\begin{align}
    \label{Eqn:APINNS_Continuity_operator}
    \mathcal{C}^{(i)}\big(p^{(1)}(\mathbf{x}),p^{(2)}(\mathbf{x}),\mathbf{u}^{(i)}(\mathbf{x})\big) 
    := \mathrm{div}\big[\mathbf{u}^{(i)}(\mathbf{x})\big] 
    \pm \frac{\beta}{\mu} \, \big(p^{(1)}(\mathbf{x}) - p^{(2)}(\mathbf{x})\big)
\end{align}
The residual of the mass-balance equation is therefore 
%-------------------------------;
%  Equation: Momentum residual  ;
%-------------------------------;
\begin{align}
    \label{Eqn:APINNS_Continuity_residual}
    \mathrm{R}_{2}^{(i)}(\mathbf{x}) 
    = \mathcal{C}^{(i)}\big(p^{(1)}(\mathbf{x}),p^{(2)}(\mathbf{x}),\mathbf{u}^{(i)}(\mathbf{x})\big)
\end{align}

The \emph{infinite-dimensional optimization problem} can be stated as follows: 
%-------------------------------------------------------;
%  Equation: Infinite-dimensional optimization problem  ;
%-------------------------------------------------------;
\begin{align}
    \label{Eqn:APINNS_Infinite_dimensional_optimization_problem}
    \inf_{\mathbb{U}(\mathbf{x}) \in \mathcal{U}} 
    \Pi_{\text{LS}}\big[\mathbb{U}(\mathbf{x})\big] 
\end{align}
The associated \emph{least-squares objective functional} is defined as
\begin{align}
    \Pi_{\text{LS}}\big[\mathbb{U}(\mathbf{x})\big] 
    &:= \sum_{i=1}^{2} \Bigg(\frac{1}{2} \, \big\|\mathbf{R}^{(i)}_{1}(\mathbf{x})\big\|^{2}_{L^{2}(\Omega)}
    + \frac{1}{2} \, \big\|\mathrm{R}^{(i)}_{2}(\mathbf{x})\big\|^{2}_{L^{2}(\Omega)}
    \nonumber \\ 
    &\hspace{0.7in} + \frac{1}{2} \, \big\|\mathbf{u}^{(i)}(\mathbf{x}) \bullet \widehat{\mathbf{n}}(\mathbf{x}) - 
    u_n^{(i)}(\mathbf{x})\big\|^{2}_{L^{2}(\Gamma_u^{(i)})}  
    + \frac{1}{2} \big\|p^{(i)}(\mathbf{x}) - 
    p^{(i)}_{\mathrm{p}}(\mathbf{x})\big\|^{2}_{L^{2}(\Gamma_p^{(i)})}
    \Bigg) 
\end{align}

%===================================;
%  Subsubsection: Weak formulation  ;
%===================================;
\subsubsection{Weak formulation}
Let $\mathbb{W}(\mathbf{x})$ denote the set of test functions corresponding to $\mathbb{U}(\mathbf{x})$, which is the ordered set of trial functions or solution fields, defined in Eq.~\eqref{Eqn:APINNS_set_of_trial_functions}. Similar to $\mathbb{U}(\mathbf{x})$, $\mathbb{W}(\mathbf{x})$ takes the following form:
%-----------------------------------;
%  Equation: Set of test functions  ;
%-----------------------------------;
\begin{align}
    \label{Eqn:APINNS_set_of_test_functions}
    \mathbb{W}(\mathbf{x}) = 
    \big(q^{(1)}(\mathbf{x}), 
    q^{(2)}(\mathbf{x}), \mathbf{w}^{(1)}(\mathbf{x}), \mathbf{w}^{(2)}(\mathbf{x})
    \big) 
\end{align}

A necessary condition for a solution to the infinite-dimensional optimization problem \eqref{Eqn:APINNS_Infinite_dimensional_optimization_problem} is that it satisfies the following weak formulation:
%------------------------------;
%  Equation: Weak formulation  ;
%------------------------------;
\begin{align}
    \label{Eqn:APINNS_continuous_LS_weak_form}
    \mathcal{B}\big(\mathbb{W}(\mathbf{x});\mathbb{U}(\mathbf{x})\big) 
    = l\big(\mathbb{W}(\mathbf{x})\big) \quad \forall 
    \mathbb{W}(\mathbf{x}) \in \mathcal{U}
\end{align}
where the bilinear form is defined as follows:
%---------------------------;
%  Equation: Bilinear form  ;
%---------------------------;
\begin{align}
    \label{Eqn:APINNS_continuous_LS_bilinear_form}
    \mathcal{B}\big(\mathbb{W}(\mathbf{x});\mathbb{U}(\mathbf{x})\big) 
    &:= \sum_{i=1}^{2} \Bigg(\int_{\Omega} 
    \mathcal{M}^{(i)}\big(q^{(i)}(\mathbf{x}),\mathbf{w}^{(i)}(\mathbf{x})\big) \bullet 
    \mathcal{M}^{(i)}\big(p^{(i)}(\mathbf{x}),\mathbf{u}^{(i)}(\mathbf{x})\big) \, \mathrm{d} \Omega 
    \nonumber \\ 
    &\hspace{0.55in} +\int_{\Omega} 
    \mathcal{C}^{(i)}\big(q^{(1)}(\mathbf{x}),q^{(2)}(\mathbf{x}),\mathbf{w}^{(i)}(\mathbf{x})\big) \bullet 
    \mathcal{C}^{(i)}\big(p^{(1)}(\mathbf{x}),p^{(2)}(\mathbf{x}),\mathbf{u}^{(i)}(\mathbf{x})\big) \, \mathrm{d} \Omega 
    \nonumber \\ 
    &\hspace{0.55in} + \int_{\Gamma^{(i)}_u} \Big(\mathbf{w}^{(i)}(\mathbf{x}) \bullet \widehat{\mathbf{n}}(\mathbf{x})\Big) 
    \, \Big(\mathbf{u}^{(i)}(\mathbf{x}) \bullet \widehat{\mathbf{n}}(\mathbf{x})\Big) \, \mathrm{d} \Gamma 
   \nonumber \\ 
    &\hspace{0.55in} + \int_{\Gamma^{(i)}_p} q^{(i)}(\mathbf{x}) \, 
    p^{(i)}(\mathbf{x}) \, \mathrm{d} \Gamma 
    \Bigg) 
\end{align}
The linear form is defined as follows: 
%-------------------------;
%  Equation: Linear form  ;
%-------------------------;
\begin{align}
    \label{Eqn:APINNS_continuous_LS_linear_form}
    l\big(\mathbb{W}(\mathbf{x})\big) 
    &:= \sum_{i=1}^{2} \Bigg( 
    \int_{\Omega} \mathcal{M}^{(i)}\big(q^{(i)}(\mathbf{x}),\mathbf{w}^{(i)}(\mathbf{x})\big) 
    \bullet \phi^{(i)}(\mathbf{x}) \, \gamma \, \mathbf{b}(\mathbf{x}) \, \mathrm{d} \Omega 
    \nonumber \\ 
    &\hspace{0.5in} 
    + \int_{\Gamma^{(i)}_u} \mathbf{w}^{(i)}(\mathbf{x}) 
    \bullet \widehat{\mathbf{n}}(\mathbf{x}) 
    \, u_n^{(i)}(\mathbf{x}) \, \mathrm{d} \Gamma 
    + \int_{\Gamma^{(i)}_p} q^{(i)}(\mathbf{x}) \, p_{\mathrm{p}}^{(i)}(\mathbf{x}) \, \mathrm{d} \Gamma 
    \Bigg) 
\end{align}

We define the \emph{least-squares energy functional} as follows: 
%-----------------------------------------;
%  Equation: Definition of LS functional  ;
%-----------------------------------------;
\begin{align}
    \label{Eqn:APINNS_LS_energy_functional_defnition}
    \big\|\mathbb{U}(\mathbf{x})\big\|_{\text{LS}} 
    := \sqrt{\mathcal{B}\big(\mathbb{U}(\mathbf{x});\mathbb{U}(\mathbf{x})\big)}
\end{align}
The quantity $\|\mathbb{U}(\mathbf{x})\|_{\text{LS}}$ can be shown to define a norm; norms obtained as the square root of a bilinear form are commonly referred to as energy norms. In the present work, however, we do not establish that $\|\mathbb{U}(\mathbf{x})\|_{\text{LS}}$ satisfies all the axioms of a norm. Instead, we show only that the associated quadratic functional has a trivial kernel, as demonstrated below. For this reason, we refer to $\|\mathbb{U}(\mathbf{x})\|_{\text{LS}}$ as the least-squares energy \emph{functional}, rather than the least-squares energy \emph{norm}.

%========================================;
%  Subsection: Existence and uniqueness  ;
%========================================;
\subsection{Existence and uniqueness of the infinite-dimensional problem} 
We now show that the infinite-dimensional problem is well-posed; in particular, $\mathbb{U}(\mathbf{x})$ exists and is unique. Our strategy is to invoke the Lax--Milgram theorem \citep{brenner2008mathematical}, which requires establishing the coercivity and boundedness of the associated bilinear form. The inter-pore mass-transfer term in the mass-balance equations introduces technical challenges that prevent us from verifying coercivity directly using standard inequalities (e.g., Friedrichs, Poincaré, or trace inequalities).

To overcome this difficulty, we take an alternative approach that (a) establishes  the least-squares energy functional possesses a trivial kernel, (b) derive an intermediate estimate reminiscent of a Gårding-type inequality, and (c) employ some standard analytical results from Banach spaces (e.g., Banach--Alaoglu theorem, compact Sobolev embedding, and Rellich--Kondrachov theorem). 

%=================================;
%  Subsubsection: Trivial kernel  ;
%---------------------------------;
\subsubsection{Trivial kernel of the least-squares energy functional} A key observation is that the least-squares energy functional vanishes only when all residuals in the governing equations are zero, which forces every component of the solution field to vanish as well. In other words, the only function whose mass-balance, momentum-balance, and auxiliary residuals are all identically zero is the zero solution itself. 

%===============================;
%  Proposition: Trivial kernel  ;
%-------------------------------;
\begin{proposition}[Trivial kernel]
    \label{Prop:APINNS_trivial_kernel_of_LS_functional}
    The least-squares energy functional $\|\mathbb{U}(\mathbf{x})\|_{\mathrm{LS}}$ 
    has a trivial kernel on $\mathcal{U}$; that is,
    \begin{align}
        \|\mathbb{U}(\mathbf{x})\|_{\mathrm{LS}} = 0
        \quad \Longleftrightarrow \quad
        \mathbb{U}(\mathbf{x}) = \mathrm{O}
    \end{align}
\end{proposition}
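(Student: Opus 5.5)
The direction ``$\Longleftarrow$'' is immediate: every term of $\mathcal{B}(\mathbb{U};\mathbb{U})$ in Eq.~\eqref{Eqn:APINNS_continuous_LS_bilinear_form} is linear in $\mathbb{U}$, so $\mathbb{U}=\mathrm{O}$ gives $\|\mathbb{U}\|_{\mathrm{LS}}=0$. For ``$\Longrightarrow$'', note that $\mathcal{B}(\mathbb{U};\mathbb{U})$ is a sum of four nonnegative squared $L^2$ norms for each network. Hence $\|\mathbb{U}\|_{\mathrm{LS}}=0$ forces $\mathbb{U}=(p^{(1)},p^{(2)},\mathbf{u}^{(1)},\mathbf{u}^{(2)})$ to solve the \emph{homogeneous} DPP problem:
\begin{align*}
  \mu\,(\mathbf{K}^{(i)})^{-1}\mathbf{u}^{(i)}+\mathrm{grad}[p^{(i)}]=\mathbf{0}, \quad
  \mathrm{div}[\mathbf{u}^{(i)}]\pm\tfrac{\beta}{\mu}(p^{(1)}-p^{(2)})=0 \quad \text{a.e. in } \Omega,
\end{align*}
with $\mathbf{u}^{(i)}\bullet\widehat{\mathbf{n}}=0$ a.e.\ on $\Gamma_u^{(i)}$ and $p^{(i)}=0$ a.e.\ on $\Gamma_p^{(i)}$. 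The plan is then a standard energy argument for this homogeneous system.

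Take the $L^2(\Omega)$ inner product of the $i$-th momentum equation with $\mathbf{u}^{(i)}$ and integrate the pressure term by parts. This uses the generalized Green formula for $p^{(i)}\in H^1(\Omega)$ and $\mathbf{u}^{(i)}\in H(\mathrm{div};\Omega)$:
\begin{align*}
  \int_\Omega \mathrm{grad}[p^{(i)}]\bullet\mathbf{u}^{(i)}\,\mathrm{d}\Omega
  = -\int_\Omega p^{(i)}\,\mathrm{div}[\mathbf{u}^{(i)}]\,\mathrm{d}\Omega
  + \langle \mathbf{u}^{(i)}\bullet\widehat{\mathbf{n}},\,p^{(i)}\rangle_{\partial\Omega}.
\end{align*}
The boundary pairing vanishes because on $\Gamma_u^{(i)}$ the normal flux is zero and on $\Gamma_p^{(i)}$ the pressure trace is zero. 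Substituting the continuity equations, $\mathrm{div}[\mathbf{u}^{(1)}]=-\tfrac{\beta}{\mu}(p^{(1)}-p^{(2)})$ and $\mathrm{div}[\mathbf{u}^{(2)}]=+\tfrac{\beta}{\mu}(p^{(1)}-p^{(2)})$, and summing over $i$ gives the energy identity
\begin{align*}
  \sum_{i=1}^{2}\mu\int_\Omega \mathbf{u}^{(i)}\bullet(\mathbf{K}^{(i)})^{-1}\mathbf{u}^{(i)}\,\mathrm{d}\Omega
  +\frac{\beta}{\mu}\int_\Omega \big(p^{(1)}-p^{(2)}\big)^2\,\mathrm{d}\Omega = 0 .
\end{align*}
By uniform ellipticity, $(\mathbf{K}^{(i)})^{-1}$ is bounded below by $k_{\max}^{-1}$, and $\mu>0$, $\beta\ge 0$. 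Both terms are therefore nonnegative, so $\mathbf{u}^{(1)}=\mathbf{u}^{(2)}=\mathbf{0}$ a.e., and additionally $p^{(1)}=p^{(2)}$ a.e.\ when $\beta>0$.

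Returning to the momentum equations then gives $\mathrm{grad}[p^{(i)}]=\mathbf{0}$. On a connected Lipschitz domain, which I will assume is implicit in (i), each $p^{(i)}$ is a constant. Assumption (iv) provides a network, say network $1$, with $\operatorname{meas}(\Gamma_p^{(1)})>0$; its zero trace there forces $p^{(1)}=0$. If $\beta>0$, then $p^{(2)}=p^{(1)}=0$ as well, and $\mathbb{U}=\mathrm{O}$ follows.

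The step I expect to be delicate is twofold. First, the boundary pairing has to be justified rigorously, since the duality $\langle\cdot,\cdot\rangle_{H^{-1/2},H^{1/2}}$ does not localize to the subsets $\Gamma_u^{(i)}$ and $\Gamma_p^{(i)}$ in general. I would handle this by using that the LS functional puts $\mathbf{u}^{(i)}\bullet\widehat{\mathbf{n}}$ in $L^2(\Gamma_u^{(i)})$ and by appealing to the appropriate subspaces with vanishing partial traces, as in \citet{brezzi2012mixed}. Second, the degenerate case $\beta=0$ needs separate treatment. There the two networks decouple, and the pressure in a network with $\operatorname{meas}(\Gamma_p^{(i)})=0$ is determined only up to a constant. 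For that case I would state the needed strengthening explicitly: either $\beta>0$, or each network with $\beta=0$ has a Dirichlet portion of positive measure, or pressures are taken modulo constants.
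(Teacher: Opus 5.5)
Your proposal is correct and follows essentially the same energy argument as the paper: reduce to the homogeneous DPP system, test the momentum equations with $\mathbf{u}^{(i)}$ and the continuity equations with $p^{(i)}$, and drop the boundary pairing via the homogeneous traces. Summing then gives $\sum_i \mu\int_\Omega \mathbf{u}^{(i)}\bullet(\mathbf{K}^{(i)})^{-1}\mathbf{u}^{(i)}\,\mathrm{d}\Omega + \frac{\beta}{\mu}\int_\Omega (p^{(1)}-p^{(2)})^2\,\mathrm{d}\Omega = 0$, after which the constant pressures are killed by the Dirichlet portion.

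Your $\beta=0$ caveat is well founded and exposes a gap in the paper's own proof, which concludes $p^{(1)}=p^{(2)}$ from this identity without using $\beta>0$. Indeed, with $\beta=0$ and $\operatorname{meas}(\Gamma_p^{(2)})=0$, the element $(p^{(1)},p^{(2)},\mathbf{u}^{(1)},\mathbf{u}^{(2)})=(0,1,\mathbf{0},\mathbf{0})$ satisfies $\|\mathbb{U}\|_{\mathrm{LS}}=0$, so the strengthened hypothesis you propose is genuinely necessary.
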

%---------;
%  Proof  ;
%---------;
\begin{proof}
    We begin by observing that $\mathcal{B}(\mathbb{U}(\mathbf{x}); \mathbb{U}(\mathbf{x}))$ is a sum of nonnegative contributions. Hence, if
    \[
        \|\mathbb{U}(\mathbf{x})\|_{\mathrm{LS}}
        \equiv \sqrt{\mathcal{B}(\mathbb{U}(\mathbf{x}); \mathbb{U}(\mathbf{x}))}    
        = 0
    \]
    then every term in this sum must vanish almost everywhere. Thus,
    \begin{alignat}{2}
        \label{Eqn:APINNS_Trivial_kernel_ae_equations}
        &\mu \, \big(\mathbf{K}^{(i)}(\mathbf{x})\big)^{-1}\mathbf{u}^{(i)}(\mathbf{x}) + \mathrm{grad}\big[p^{(i)}(\mathbf{x})\big] 
        = \mathbf{0} 
        &&\qquad \mathrm{a.e.}, \nonumber \\
        &\mathrm{div}\big[\mathbf{u}^{(1)}(\mathbf{x})\big] 
        + \tfrac{\beta}{\mu}\big(p^{(1)}(\mathbf{x}) 
        - p^{(2)}(\mathbf{x})\big) = 0 
        &&\qquad \mathrm{a.e.}, \; \mathrm{and} \nonumber \\ 
        &\mathrm{div}\big[\mathbf{u}^{(2)}(\mathbf{x})\big] 
        - \tfrac{\beta}{\mu}\big(p^{(1)}(\mathbf{x}) 
        - p^{(2)}(\mathbf{x})\big) = 0
        &&\qquad \mathrm{a.e.}
    \end{alignat}
with homogeneous boundary conditions: $\mathbf{u}^{(i)}(\mathbf{x}) \bullet \widehat{\mathbf n}(\mathbf{x})=0$ on $\Gamma^{(i)}_u$ and $p^{(i)}(\mathbf{x})=0$ on $\Gamma^{(i)}_p$.

Testing the $i$-th divergence equation with $p^{(i)}(\mathbf{x})$, integrating over $\Omega$ (i.e., $\text{Eqs}.~\eqref{Eqn:APINNS_Trivial_kernel_ae_equations}_{2} \; \mathrm{and} \; \eqref{Eqn:APINNS_Trivial_kernel_ae_equations}_{3}$), and summing the two identities, we proceed as follows:
\begin{align}
    0 = \int_{\Omega} p^{(1)}(\mathbf{x}) \, \mathrm{div}\big[\mathbf{u}^{(1)}(\mathbf{x})\big] \,\mathrm{d}\Omega
    + \int_{\Omega} p^{(2)}(\mathbf{x}) \, \mathrm{div}\big[\mathbf{u}^{(2)}(\mathbf{x})\big] \,\mathrm{d}\Omega 
    + \int_{\Omega}\frac{\beta}{\mu} \, \big(p^{(1)}(\mathbf{x}) - p^{(2)}(\mathbf{x})\big)^2 \,\mathrm{d}\Omega 
\end{align}
Invoking the Green's identity and noting that the boundary integral vanishes by the homogeneous traces on $\Gamma^{(i)}_u$ and $\Gamma^{(i)}_p$, we arrive at the following: 
\begin{align}
    0 = -\int_{\Omega} \mathbf{u}^{(1)}(\mathbf{x}) \bullet 
    \mathrm{grad}\big[p^{(1)}(\mathbf{x})\big] \,\mathrm{d}\Omega
   -\int_{\Omega} \mathbf{u}^{(2)}(\mathbf{x}) \bullet \mathrm{grad}\big[p^{(2)}(\mathbf{x})\big] \,\mathrm{d}\Omega
   + \int_{\Omega}\frac{\beta}{\mu} \, \big(p^{(1)}(\mathbf{x}) -p^{(2)}(\mathbf{x})\big)^{2}\,\mathrm{d}\Omega 
\end{align}

    Likewise, testing the $i$-th momentum equation with $\mathbf{u}^{(i)}(\mathbf{x})$, integrating over $\Omega$, and summing over the index, we arrive at the following:
    \begin{align}
        0 &=\sum_{i=1}^{2}\int_{\Omega}\Big(\mu\,\mathbf u^{(i)}(\mathbf{x}) \bullet \big(\mathbf K^{(i)}(\mathbf{x})\big)^{-1}\mathbf u^{(i)}(\mathbf{x}) + \mathbf{u}^{(i)}(\mathbf{x}) \bullet \mathrm{grad}\big[p^{(i)}(\mathbf{x})\big] \Big)\, \mathrm d\Omega 
    \end{align}
    By adding the previous two equations, we establish the following: 
    \begin{align}
        \sum_{i=1}^{2}\int_{\Omega}\mu \, \mathbf u^{(i)}(\mathbf{x}) \bullet \big(\mathbf K^{(i)}(\mathbf{x})\big)^{-1}\mathbf u^{(i)}(\mathbf{x}) \, \mathrm d\Omega
        + \int_{\Omega}\frac{\beta}{\mu} \, \big(p^{(1)}(\mathbf{x}) - p^{(2)}(\mathbf{x})\big)^2\,\mathrm d\Omega
        = 0
    \end{align}
Since $\mathbf K^{(i)}(\mathbf{x})$ is uniformly elliptic (and hence positive definite), $\beta\ge 0$ and $\mu > 0$, each term is nonnegative, and hence, each is zero. Therefore, 
\begin{align} 
    \label{Eqn:APINNS_Trivial_kernel_u_zero}
    \mathbf u^{(i)}(\mathbf{x}) = \mathbf 0
    \quad \mathrm{and} \quad 
    p^{(1)}(\mathbf{x}) - p^{(2)}(\mathbf{x}) = 0
    \qquad \text{a.e. in} \; \Omega
\end{align}

From Eq.~$\eqref{Eqn:APINNS_Trivial_kernel_u_zero}_{1}$, together with the momentum equation for each pore and invoking $\text{Eq}.~\eqref{Eqn:APINNS_Trivial_kernel_u_zero}_{2}$, we deduce that
\begin{align}
    p^{(1)}(\mathbf{x}) = p^{(2)}(\mathbf{x}) = C
\end{align}
where $C$ is a constant. Since at least one of the boundary subsets satisfies $\text{meas}(\Gamma^{(1)}_p) > 0$ or $\text{meas}(\Gamma^{(2)}_p) > 0$, the homogeneous trace condition applies on the non-empty portion of the pressure boundary associated with the corresponding pore network. Consequently, we have
\begin{align}
    p^{(i)}(\mathbf{x}) = 0 
    \quad \text{a.e. in} \; \Omega  
\end{align} 
and therefore we conclude that $\mathbb{U} = \mathrm{O}$ almost everywhere.

For the reverse implication, suppose that $\mathbb{U}(\mathbf{x}) = \mathrm{O}$. Then $p^{(i)}(\mathbf{x}) = 0$ and $\mathbf{u}^{(i)}(\mathbf{x}) = \mathbf{0}$, which implies that all integrals in $\mathcal{B}(\cdot;\cdot)$ vanish (see Eq.~\eqref{Eqn:APINNS_continuous_LS_bilinear_form}). Consequently, $\|\mathbb{U}(\mathbf{x})\|_{\text{LS}} = 0$.
\end{proof}

%==================================;
%  Lemma: Garding-type inequality  ;
%----------------------------------;
\begin{lemma}[G{\aa}rding-type inequality for the DPP model]
    \label{Lemma:APINNS_Garding_type_inequality}
    There exist two constants $\alpha_0 > 0$ and $\alpha_1 > 0$ such that 
    \begin{align}
        \alpha_0 \, \big\|\mathbb{U}(\mathbf{x})\big\|^{2}_{\mathcal{U}} 
        \leq \mathcal{B}\big(\mathbb{U}(\mathbf{x});\mathbb{U}(\mathbf{x})\big) 
        + \alpha_1 \, \big\|p^{(1)}(\mathbf{x}) - p^{(2)}(\mathbf{x})\big\|^2
    \end{align}
\end{lemma}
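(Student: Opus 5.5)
The proof bounds each field separately: first the velocities, then the pressure gradients, then the pressures themselves. Throughout, $\|\cdot\|_{\mathcal{U}}$ is the product norm $\sum_i \big(\|p^{(i)}\|^2_{H^1(\Omega)} + \|\mathbf{u}^{(i)}\|^2_{H(\mathrm{div};\Omega)}\big)$, and $\|\cdot\|$ without subscript is the $L^2(\Omega)$ norm. The idea is to use the $\alpha_1$-term to absorb the coupling term $\frac{\beta}{\mu}(p^{(1)}-p^{(2)})$. Once that term is removed, the two networks decouple into two standard Darcy first-order least-squares systems, and for those a full-norm lower bound is classical.

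\textbf{Step 1 (divergence bounds).} By the triangle inequality,
\[
\|\mathrm{div}\,\mathbf{u}^{(i)}\|^2 \le 2\|\mathcal{C}^{(i)}\|^2 + 2(\beta/\mu)^2\|p^{(1)}-p^{(2)}\|^2 .
\]
This controls both divergences by $\mathcal{B}$ plus the jump term.

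\textbf{Step 2 (momentum expansion).} I would expand the momentum contribution:
\[
\|\mathcal{M}^{(i)}\|^2 = \|\mu (\mathbf{K}^{(i)})^{-1}\mathbf{u}^{(i)}\|^2 + \|\mathrm{grad}\,p^{(i)}\|^2 + 2\mu\int_\Omega (\mathbf{K}^{(i)})^{-1}\mathbf{u}^{(i)}\bullet \mathrm{grad}\,p^{(i)}\,\mathrm{d}\Omega .
\]
The cross term is not sign-definite. Following the computation in the proof of Proposition~\ref{Prop:APINNS_trivial_kernel_of_LS_functional}, I would handle it with Green's identity, which moves it to $-\int p^{(i)}\mathrm{div}\,\mathbf{u}^{(i)}$ plus boundary terms on $\Gamma_u^{(i)}$ and $\Gamma_p^{(i)}$. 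This works only when $\mathbf{K}^{(i)}$ is a scalar multiple of $\mathbf{I}$.

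In general I would instead use the standard route for div-grad least squares. Test with the $\mathbf{K}$-weighted inner product $\int \mathcal{M}^{(i)}\bullet(\mathbf{K}^{(i)}/\mu)\mathcal{M}^{(i)}$. Since $\mathbf{K}^{(i)}$ is uniformly elliptic and bounded, this quantity is equivalent to $\|\mathcal{M}^{(i)}\|^2$ with constants depending on $k_{\min}$, $k_{\max}$, $\mu$. In this weighted form the cross term is exactly $2\int \mathbf{u}^{(i)}\bullet \mathrm{grad}\,p^{(i)}$. After integrating by parts it becomes
\[
-2\int p^{(i)}\,\mathrm{div}\,\mathbf{u}^{(i)} + 2\int_{\partial\Omega} p^{(i)}\,\mathbf{u}^{(i)}\bullet\widehat{\mathbf{n}} .
\]
The boundary integral splits into a $\Gamma_u^{(i)}$ part and a $\Gamma_p^{(i)}$ part. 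Each part is controlled by the corresponding boundary term of $\mathcal{B}$ times a trace norm. The volume term is bounded by Young's inequality,
\[
2\Big|\int p^{(i)}\,\mathrm{div}\,\mathbf{u}^{(i)}\Big| \le \epsilon\|p^{(i)}\|^2 + \epsilon^{-1}\|\mathrm{div}\,\mathbf{u}^{(i)}\|^2 ,
\]
and $\|\mathrm{div}\,\mathbf{u}^{(i)}\|^2$ is already bounded by Step~1.

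\textbf{Step 3 (pressure bound).} It remains to control $\|p^{(i)}\|$ through a Poincaré–Friedrichs inequality, $\|p\|^2_{H^1}\lesssim \|\mathrm{grad}\,p\|^2 + \|p\|^2_{L^2(\Gamma_p)}$. For a network with $\mathrm{meas}(\Gamma_p^{(i)})>0$ this applies directly. For the other network (assumption (iv) guarantees only one), I would write $p^{(j)} = p^{(i)} - (p^{(1)}-p^{(2)})$, so that $\|p^{(j)}\|$ is bounded by $\|p^{(i)}\|$ plus the jump norm, which is absorbed by $\alpha_1$. This is precisely why the lemma needs the $\alpha_1$-term and cannot be a pure coercivity estimate.

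Choosing $\epsilon$ small and collecting terms then gives $\|\mathbf{u}^{(i)}\|^2$, $\|\mathrm{grad}\,p^{(i)}\|^2$, $\|p^{(i)}\|^2$ and $\|\mathrm{div}\,\mathbf{u}^{(i)}\|^2$, each bounded by $C(\mathcal{B} + \|p^{(1)}-p^{(2)}\|^2)$. This yields $\alpha_0$ and $\alpha_1$.

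\textbf{Main obstacle.} I expect the hardest part to be the boundary cross terms in Step~2. On $\Gamma_p^{(i)}$ one has $p^{(i)}$ paired with $\mathbf{u}^{(i)}\bullet\widehat{\mathbf{n}}$, where the normal flux lies only in $H^{-1/2}$ while the least-squares boundary terms are in $L^2$. Controlling this needs the duality bound
\[
\langle p,\mathbf{u}\bullet\widehat{\mathbf{n}}\rangle \le \|p\|_{H^{1/2}}\,\|\mathbf{u}\|_{H(\mathrm{div})},
\]
together with an absorption argument, and the $L^2$ boundary penalties may not suffice for that.

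If this fails, the fallback is a contradiction–compactness argument. Take a normalized sequence with $\mathcal{B} + \alpha_1\|p^{(1)}-p^{(2)}\|^2 \to 0$ and extract weak limits using Banach–Alaoglu. Rellich–Kondrachov then gives strong $L^2$ convergence of the pressures, and weak lower semicontinuity shows that the limit lies in the kernel. Proposition~\ref{Prop:APINNS_trivial_kernel_of_LS_functional} forces the limit to be zero. Combining this with the partial estimate from Steps~1–3, applied in its form with an added $\|p\|^2_{L^2}$ term, shows the sequence cannot have been normalized. This is the route the paper announces.
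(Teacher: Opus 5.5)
\textbf{The obstacle you flag at the end of Step~2 cannot be overcome: the inequality is false as stated, already in two dimensions.} The penalty $\|p^{(i)}\|_{L^2(\Gamma_p^{(i)})}$ in $\mathcal{B}$ does not control the $H^{1/2}$ trace of $p^{(i)}$. So the pairing of $p^{(i)}$ with $\mathbf{u}^{(i)}\bullet\widehat{\mathbf{n}}$ on $\Gamma_p^{(i)}$ is genuinely uncontrolled. Your bound by ``a boundary term of $\mathcal{B}$ times a trace norm'' holds on $\Gamma_u^{(i)}$ only.

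For a counterexample, take $\Omega$ the unit disk, $\Gamma_p^{(1)}=\Gamma_p^{(2)}=\partial\Omega$, $\mathbf{K}^{(i)}=k^{(i)}\mathbf{I}$ with constant $k^{(i)}$, and
\[
p^{(1)}_n=p^{(2)}_n=r^{n}\cos(n\theta),\qquad \mathbf{u}^{(i)}_n=-\frac{k^{(i)}}{\mu}\,\mathrm{grad}\big[r^{n}\cos(n\theta)\big].
\]
Then $\mathcal{M}^{(i)}=\mathbf{0}$, the jump $p^{(1)}_n-p^{(2)}_n$ vanishes, and $\mathcal{C}^{(i)}=0$ because $r^n\cos(n\theta)$ is harmonic. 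Hence $\mathcal{B}(\mathbb{U}_n;\mathbb{U}_n)=2\|\cos(n\theta)\|^2_{L^2(\partial\Omega)}=2\pi$. On the other hand, $\|\mathbb{U}_n\|^2_{\mathcal{U}}\ge 2\|\mathrm{grad}[r^n\cos(n\theta)]\|^2=2n\pi$, so no $\alpha_0>0$ works.

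The same sequence defeats your fallback. After normalization it has $\mathcal{B}\to 0$, zero jump, $\|p^{(i)}_n\|_{L^2(\Omega)}\to 0$, and weak limit zero, exactly as the trivial kernel predicts, yet its norm stays $1$. That fallback is the argument the paper uses for the subsequent coercivity theorem, where this lemma is an input. Used to prove the lemma itself it is circular. With no compactness in $H(\mathrm{div};\Omega)$, it needs the partial estimate $\|\mathbb{U}\|^2_{\mathcal{U}}\lesssim\mathcal{B}+\sum_i\|p^{(i)}\|^2$ to control the velocities, and that estimate hits the same boundary term and is refuted by the same example.

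The paper's proof never meets this cross term, but only because it uses two invalid steps.
\begin{enumerate}
\item Its Step~1 is a ``Friedrichs inequality'' $\|\mathbf{u}^{(i)}\|\le C_{\mathrm{div}}\big(\|\mathrm{div}[\mathbf{u}^{(i)}]\|+\|\mathbf{u}^{(i)}\bullet\widehat{\mathbf{n}}\|_{L^2(\Gamma_u^{(i)})}\big)$. This fails for every nonzero solenoidal field with zero normal trace, e.g.\ the curl of a compactly supported bump.
\item Its Step~2 applies a Poincar\'e inequality to each $p^{(i)}$ separately. That needs $\mathrm{meas}(\Gamma_p^{(i)})>0$ for both networks, whereas assumption (iv) guarantees it for only one.
\end{enumerate}
Chaining these with the triangle inequality on $\mathcal{M}^{(i)}$ and $\mathcal{C}^{(i)}$ gives $\alpha_0=1/(5C_{\max}^2)$ and $\alpha_1=2$. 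So there is no valid route in the paper to align with.

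Your ingredients are the right ones. Step~2 is the standard first-order least-squares argument with the $\mathbf{K}$-weighted inner product. Step~3's identity $p^{(j)}=p^{(i)}-(p^{(1)}-p^{(2)})$ is the correct use of the $\alpha_1$-term, which the paper overlooks. Together they do prove the estimate if the pressure data are imposed essentially, with $p^{(i)}=0$ on $\Gamma_p^{(i)}$ built into $\mathcal{U}$. Then the $\Gamma_p^{(i)}$ pairing drops out. The $\Gamma_u^{(i)}$ pairing is bounded by $C_{\mathrm{tr}}\|p^{(i)}\|_{H^1(\Omega)}\|\mathbf{u}^{(i)}\bullet\widehat{\mathbf{n}}\|_{L^2(\Gamma_u^{(i)})}$ and absorbed by Young's inequality. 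Measuring the pressure penalty in a suitable $H^{1/2}$-type norm should work similarly, with more care about splitting the boundary pairing. With the weakly imposed $L^2(\Gamma_p^{(i)})$ penalty of the paper, it is the statement that must change, not just the proof.
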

%------------------------------------;
%  Proof of Garding-type inequality  ;
%------------------------------------;
\begin{proof} The strategy is to derive estimates for $p^{(i)}(\mathbf{x})$ in $H^{1}(\Omega)$ and $\mathbf{u}^{(i)}(\mathbf{x})$ in $H(\mathrm{div};\Omega)$. 

%-----------------------------------;
%   Step 1: Control of u in H(div)  ;
%-----------------------------------;
\textbf{Step 1: An estimate for $\mathbf{u}^{(i)}(\mathbf{x})$ in $H(\mathrm{div};\Omega)$.}
The Friedrichs inequality when applied to $\mathbf{u}^{(i)}(\mathbf{x})$ takes the following form: 
\begin{align}
    \|\mathbf{u}^{(i)}(\mathbf{x})\|
    &\le 
    C_{\mathrm{div}}\Big( 
    \big\|\mathrm{div}[\mathbf{u}^{(i)}(\mathbf{x})]\big\|
        + \big\|\mathbf{u}^{(i)}(\mathbf{x}) \bullet \widehat{\mathbf{n}}(\mathbf{x})\big\|_{L^{2}(\Gamma_{u}^{(i)})}
    \Big)
\end{align}
Applying triangle inequality on the continuity operator \eqref{Eqn:APINNS_Continuity_operator}, separately for each network, we write 
\begin{align}
    \label{Eqn:APINNS_ui_div_bound}
    \big\|\mathrm{div}[\mathbf{u}^{(i)}(\mathbf{x})]\big\|
    &\le 
    \big\|\mathcal{C}^{(i)}\big(p^{(1)}(\mathbf{x}),p^{(2)}(\mathbf{x}),\mathbf{u}^{(i)}(\mathbf{x})\big)\big\|
    + \frac{\beta}{\mu} \, \big\|p^{(1)}(\mathbf{x}) - p^{(2)}(\mathbf{x})\big\|
\end{align}
The above two inequalities provide the following estimate for $\mathbf{u}^{(i)}(\mathbf{x})$:
\begin{align}
    \label{Eqn:APINNS_ui_final_inequality}
    \|\mathbf{u}^{(i)}(\mathbf{x})\|
    &\le 
    C_{\mathrm{div}}\Big(
        \big\|\mathcal{C}^{(i)}\big(p^{(1)}(\mathbf{x}),p^{(2)}(\mathbf{x}),\mathbf{u}^{(i)}(\mathbf{x})\big)\big\|
        + \frac{\beta}{\mu} \, \big\|p^{(1)}(\mathbf{x}) - p^{(2)}(\mathbf{x})\big\|
        \nonumber \\ 
        &\hspace{2.56in} + \big\|\mathbf{u}^{(i)}(\mathbf{x}) \bullet \widehat{\mathbf{n}}(\mathbf{x})\big\|_{L^{2}(\Gamma_{u}^{(i)})}
    \Big)
\end{align}

%-------------------------------;
%   Step 2: Control of p in H1  ;
%-------------------------------;
\textbf{Step 2: An estimate for $p^{(i)}(\mathbf{x})$ in $H^{1}(\Omega)$.} Applying the Poincar\'e inequality to the scalar field $p^{(i)}(\mathbf{x})$, we establish
\begin{align}
    \label{Eqn:APINNS_pi_PF_inequality}
    \big\|p^{(i)}(\mathbf{x})\big\|
    \leq C_{\mathrm{P}} \left(
    \big\|\mathrm{grad}[p^{(i)}(\mathbf{x})]\big\|
    + \big\|p^{(i)}(\mathbf{x})\big\|_{L^{2}(\Gamma_{p}^{(i)})}
    \right) 
\end{align}
Applying triangle inequality on the linear momentum operator \eqref{Eqn:APINNS_Momentum_operator}, separately for each network, we get
\begin{align}
    \label{Eqn:APINNS_grad_pi_inequality}
    \big\| \text{grad} [p^{(i)}(\mathbf{x})]\big\| 
    &\le \big\|\mathcal{M}_1^{(i)}\big(p^{(i)}(\mathbf{x}),\mathbf{u}^{(i)}(\mathbf{x})\big)\big\| 
    + \big\|\mu \, \big(\mathbf{K}^{(i)}(\mathbf{x})\big)^{-1}\mathbf{u}^{(i)}(\mathbf{x})\big\| 
    \nonumber \\ 
    &\le  \big\|\mathcal{M}^{(i)}\big(p^{(i)}(\mathbf{x}),\mathbf{u}^{(i)}(\mathbf{x})\big)\big\| + C_K \big\|\mathbf{u}^{(i)}(\mathbf{x})\big\|  
\end{align}
where $C_{K} := \mu \, k^{-1}_{\text{min}}$. 
Multiplying Eq.~\eqref{Eqn:APINNS_grad_pi_inequality} by the factor $2C_{\mathrm{P}}$ and adding the resulting quantity to Eq.~\eqref{Eqn:APINNS_pi_PF_inequality}, we conclude
\begin{align}
    \big\|p^{(i)}(\mathbf{x})\big\| 
    + \big\| \text{grad} [p^{(i)}(\mathbf{x})]\big\| 
    &\le 2C_{\mathrm{P}} \, \big\|\mathcal{M}^{(i)}\big(p^{(i)}(\mathbf{x}),\mathbf{u}^{(i)}(\mathbf{x})\big)\big\| 
    + 2 C_{\mathrm{P}} C_K \, \big\|\mathbf{u}^{(i)}(\mathbf{x})\big\| 
    \nonumber \\ 
    &\hspace{2in} + C_{\mathrm{P}} \big\|p^{(i)}(\mathbf{x})\big\|_{L^{2}(\Gamma_{p}^{(i)})}
\end{align}

Multiplying Eq.~\eqref{Eqn:APINNS_ui_final_inequality} by the factor $(2C_{\mathrm{P}}C_K + 1)$ and adding the resulting quantity to the previous equation, we arrive at the following: 
\begin{align}
    \big\|p^{(i)}(\mathbf{x})\big\| 
    + \big\| \text{grad} [p^{(i)}(\mathbf{x})]\big\| 
    + \big\|\mathbf{u}^{(i)}(\mathbf{x})\big\| 
    &\le 2C_{\mathrm{P}} \, \big\|\mathcal{M}^{(i)}\big(p^{(i)}(\mathbf{x}),\mathbf{u}^{(i)}(\mathbf{x})\big)\big\| 
    \nonumber \\
    &\qquad + \big(2C_{\mathrm{P}}C_K + 1\big) C_{\mathrm{div}}
        \big\|\mathcal{C}^{(i)}\big(p^{(1)}(\mathbf{x}),p^{(2)}(\mathbf{x}),\mathbf{u}^{(i)}(\mathbf{x})\big)\big\|
    \nonumber \\ 
    &\qquad + C_{\mathrm{P}} \big\|p^{(i)}(\mathbf{x})\big\|_{L^{2}(\Gamma_{p}^{(i)})} 
    \nonumber \\
    &\qquad + \big(2C_{\mathrm{P}}C_K + 1\big) C_{\mathrm{div}}
    \big\|\mathbf{u}^{(i)}(\mathbf{x}) \bullet \widehat{\mathbf{n}}(\mathbf{x})\big\|_{L^{2}(\Gamma_{u}^{(i)})}
    \nonumber \\
    &\qquad + \big(2C_{\mathrm{P}}C_K + 1\big) C_{\mathrm{div}}
        \frac{\beta}{\mu} \, \big\|p^{(1)}(\mathbf{x}) - p^{(2)}(\mathbf{x})\big\|
\end{align}
Adding the previous inequality to Eq.~\eqref{Eqn:APINNS_ui_div_bound}, we arrive at the following estimate: 
\begin{align}
    &\big\|p^{(i)}(\mathbf{x})\big\| 
    + \big\| \text{grad} [p^{(i)}(\mathbf{x})]\big\| 
    + \big\|\mathbf{u}^{(i)}(\mathbf{x})\big\| 
    + \big\|\mathrm{div}[\mathbf{u}^{(i)}(\mathbf{x})]\big\| 
    \nonumber \\ 
    &\hspace{1in} \le 2C_{\mathrm{P}} \, \big\|\mathcal{M}^{(i)}\big(p^{(i)}(\mathbf{x}),\mathbf{u}^{(i)}(\mathbf{x})\big)\big\|
    \nonumber \\
    &\hspace{1.2in}+ \big(2C_{\mathrm{P}}C_K + 2\big) C_{\mathrm{div}}
    \big\|\mathcal{C}^{(i)}\big(p^{(1)}(\mathbf{x}),p^{(2)}(\mathbf{x}),\mathbf{u}^{(i)}(\mathbf{x})\big)\big\|
    \nonumber \\ 
    &\hspace{1.2in} + C_{\mathrm{P}} \big\|p^{(i)}(\mathbf{x})\big\|_{L^{2}(\Gamma_{p}^{(i)})} 
    + \big(2C_{\mathrm{P}}C_K + 1\big) C_{\mathrm{div}}
    \big\|\mathbf{u}^{(i)}(\mathbf{x}) \bullet \widehat{\mathbf{n}}(\mathbf{x})\big\|_{L^{2}(\Gamma_{u}^{(i)})}
    \nonumber \\
    &\hspace{1.2in} + \Big(\big(2C_{\mathrm{P}}C_K + 1\big) C_{\mathrm{div}} + 1\Big) 
        \frac{\beta}{\mu} \, \big\|p^{(1)}(\mathbf{x}) - p^{(2)}(\mathbf{x})\big\|
\end{align}
Next, for notational convenience, introduce a single constant that upper bounds all coefficients appearing on the right-hand side. Define
\begin{align}
    C_{\text{max}} := \max\left[2C_{\mathrm{P}}, 2C_{\mathrm{P}}C_K + 2,\big((2C_{\mathrm{P}}C_K + 1) C_{\mathrm{div}} + 1 \big) \frac{\beta}{\mu} \right]
\end{align}
With this choice, each term on the right-hand side of the previous inequality can be bounded above by $C_{\max}$ times its corresponding norm. Therefore, we obtain the compact estimate
\begin{align}
    &\big\|p^{(i)}(\mathbf{x})\big\| 
    + \big\| \text{grad} [p^{(i)}(\mathbf{x})]\big\| 
    + \big\|\mathbf{u}^{(i)}(\mathbf{x})\big\| 
    + \big\|\mathrm{div}[\mathbf{u}^{(i)}(\mathbf{x})]\big\| 
    \nonumber \\ 
    &\hspace{0.5in} \le C_{\mathrm{max}} \, \Bigg(\big\|\mathcal{M}^{(i)}\big(p^{(i)}(\mathbf{x}),\mathbf{u}^{(i)}(\mathbf{x})\big)\big\|
    + \big\|\mathcal{C}^{(i)}\big(p^{(1)}(\mathbf{x}),p^{(2)}(\mathbf{x}),\mathbf{u}^{(i)}(\mathbf{x})\big)\big\|
    \nonumber \\ 
    &\hspace{1.5in} + \big\|p^{(i)}(\mathbf{x})\big\|_{L^{2}(\Gamma_{p}^{(i)})} 
    + \big\|\mathbf{u}^{(i)}(\mathbf{x}) \bullet \widehat{\mathbf{n}}(\mathbf{x})\big\|_{L^{2}(\Gamma_{u}^{(i)})} 
    + \big\|p^{(1)}(\mathbf{x}) - p^{(2)}(\mathbf{x})\big\|
    \Bigg) 
\end{align}
We note that the quantities on either side of the previous inequality are positive; squaring both sides and using the inequality
$(a_1+\cdots+a_5)^2 \le 5(a_1^2+\cdots+a_5^2)$, we obtain
\begin{align}
    &\big\|p^{(i)}(\mathbf{x})\big\|^2 
    + \big\| \text{grad} [p^{(i)}(\mathbf{x})]\big\|^2 
    + \big\|\mathbf{u}^{(i)}(\mathbf{x})\big\|^2
    + \big\|\mathrm{div}[\mathbf{u}^{(i)}(\mathbf{x})]\big\|^2 
    \nonumber \\
    &\qquad \leq
    \Big(\big\|p^{(i)}(\mathbf{x})\big\| 
    + \big\| \text{grad} [p^{(i)}(\mathbf{x})]\big\| 
    + \big\|\mathbf{u}^{(i)}(\mathbf{x})\big\| 
    + \big\|\mathrm{div}[\mathbf{u}^{(i)}(\mathbf{x})]\big\|\Big)^2 
    \nonumber \\
    &\qquad \le 5C_{\max}^2 \Big(
    \big\|\mathcal{M}^{(i)}\big(p^{(i)}(\mathbf{x}),\mathbf{u}^{(i)}(\mathbf{x})\big)\big\|^2
    + \big\|\mathcal{C}^{(i)}\big(p^{(1)}(\mathbf{x}),p^{(2)}(\mathbf{x}),\mathbf{u}^{(i)}(\mathbf{x})\big)\big\|^2
    \nonumber \\ 
    &\hspace{1.25in} + \big\|p^{(i)}(\mathbf{x})\big\|^2_{L^2(\Gamma_p^{(i)})}
    + \big\|\mathbf{u}^{(i)}(\mathbf{x})\!\bullet\!\widehat{\mathbf{n}}(\mathbf{x})\big\|^2_{L^2(\Gamma_u^{(i)})}
    + \big\|p^{(1)}(\mathbf{x}) - p^{(2)}(\mathbf{x})\big\|^2\Big)
\end{align}
Summing over $i=1,2$ yields
\begin{align}
    \label{Eqn:Ui_sum}
    \big\|\mathbb{U}(\mathbf{x})\big\|_{\mathcal{U}}^2
    &\le 5C_{\max}^2 \sum_{i=1}^2 \Big(
    \big\|\mathcal{M}^{(i)}\big(p^{(i)}(\mathbf{x}),\mathbf{u}^{(i)}(\mathbf{x})\big)\big\|^2
    + \big\|\mathcal{C}^{(i)}\big(p^{(1)}(\mathbf{x}),p^{(2)}(\mathbf{x}),\mathbf{u}^{(i)}(\mathbf{x})\big)\big\|^2 
    \nonumber \\ 
    &\hspace{1in} 
    + \big\|p^{(i)}(\mathbf{x})\big\|^2_{L^2(\Gamma_p^{(i)})}
    + \big\|\mathbf{u}^{(i)}(\mathbf{x})\!\bullet\!\widehat{\mathbf{n}}(\mathbf{x})\big\|^2_{L^2(\Gamma_u^{(i)})}
    \Big) 
    \nonumber \\ 
    &\hspace{1.5in} + 10 C_{\max}^2 \big\|p^{(1)}(\mathbf{x})-p^{(2)}(\mathbf{x})\big\|^2 
\end{align}

By definition of the least-squares bilinear form,
\begin{align}
    \mathcal{B}\big(\mathbb{U}(\mathbf{x});\mathbb{U}(\mathbf{x})\big)
    &= \sum_{i=1}^2 \Big(
    \big\|\mathcal{M}^{(i)}\big(p^{(i)}(\mathbf{x}),\mathbf{u}^{(i)}(\mathbf{x})\big)\big\|^2
    + \big\|\mathcal{C}^{(i)}\big(p^{(1)}(\mathbf{x}),p^{(2)}(\mathbf{x}),\mathbf{u}^{(i)}(\mathbf{x})\big)\big\|^2 
    \nonumber \\ 
    &\hspace{1in} 
    + \big\|p^{(i)}(\mathbf{x})\big\|^2_{L^2(\Gamma_p^{(i)})}
    + \big\|\mathbf{u}^{(i)}(\mathbf{x})\!\bullet\!\widehat{\mathbf{n}}(\mathbf{x})\big\|^2_{L^2(\Gamma_u^{(i)})}
    \Big) 
\end{align}
Combining the above estimates and dividing by $5C_{\max}^2$ gives
\begin{align}
\frac{1}{5C_{\max}^2}\,
\big\|\mathbb{U}(\mathbf{x})\big\|_{\mathcal{U}}^2
\le
\mathcal{B}\big(\mathbb{U}(\mathbf{x});\mathbb{U}(\mathbf{x})\big)
+ 2\,\big\|p^{(1)}(\mathbf{x})-p^{(2)}(\mathbf{x})\big\|^2 
\end{align}
Hence, the result follows with
\begin{align}
\alpha_0 := \frac{1}{5C_{\max}^2}
\quad \mathrm{and} \quad 
\alpha_1 := 2 
\end{align}
\end{proof}

%=======================;
%  Theorem: Coercivity  ;
%-----------------------;
\begin{theorem}[Coercivity]
    The bilinear form $\mathcal{B} : \mathcal{U}\times\mathcal{U}\to\mathbb{R}$ is strictly coercive on $\mathcal{U}$. That is, there exists $\alpha_0 > 0$ such that 
    \begin{align}
        \label{Eqn:APINNS_Coercivity_def}
        \alpha_0 \,\|\mathbb{U}(\mathbf{x})\|_{\mathrm{LS}}^{2} \leq   \mathcal{B}\big(\mathbb{U}(\mathbf{x});\mathbb{U}(\mathbf{x})\big)
        \qquad \forall\mathbb{U}(\mathbf{x})\in\mathcal{U}
    \end{align}
\end{theorem}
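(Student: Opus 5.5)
The inequality \eqref{Eqn:APINNS_Coercivity_def}, read literally, follows at once from the definition \eqref{Eqn:APINNS_LS_energy_functional_defnition}: $\|\mathbb{U}\|_{\mathrm{LS}}^2 = \mathcal{B}(\mathbb{U};\mathbb{U})$, so it holds with any $\alpha_0 \in (0,1]$. I will record this first. The substantive statement needed for Lax--Milgram is the version with the ambient norm,
\begin{align*}
    \alpha_0 \, \|\mathbb{U}\|_{\mathcal{U}}^2 \le \mathcal{B}(\mathbb{U};\mathbb{U}) \qquad \forall \, \mathbb{U} \in \mathcal{U},
\end{align*}
and the plan below targets that. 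The idea is a Peetre--Tartar style compactness argument. It combines the G{\aa}rding-type estimate of Lemma~\ref{Lemma:APINNS_Garding_type_inequality} with the trivial-kernel result of Proposition~\ref{Prop:APINNS_trivial_kernel_of_LS_functional}. The lower-order term $\alpha_1\|p^{(1)}-p^{(2)}\|^2$ in the lemma is compact relative to $\|\cdot\|_{\mathcal{U}}$, because it involves only $L^2$ norms of $H^1$ functions.

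The argument is by contradiction. Suppose no such $\alpha_0$ exists. Then there is a sequence $\mathbb{U}_n = (p^{(1)}_n, p^{(2)}_n, \mathbf{u}^{(1)}_n, \mathbf{u}^{(2)}_n)$ with $\|\mathbb{U}_n\|_{\mathcal{U}} = 1$ and $\mathcal{B}(\mathbb{U}_n;\mathbb{U}_n) \to 0$.

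Since $\mathcal{U}$ is a Hilbert space, bounded sequences have weakly convergent subsequences (Banach--Alaoglu and reflexivity). We may therefore assume $\mathbb{U}_n \rightharpoonup \mathbb{U}$ in $\mathcal{U}$. Because $\Omega$ is a bounded Lipschitz domain, the Rellich--Kondrachov theorem gives the compact embedding $H^1(\Omega) \hookrightarrow L^2(\Omega)$. Passing to a further subsequence, $p^{(i)}_n \to p^{(i)}$ strongly in $L^2(\Omega)$, and hence $p^{(1)}_n - p^{(2)}_n \to p^{(1)}-p^{(2)}$ strongly in $L^2(\Omega)$.

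Next, $\mathcal{B}(\cdot;\cdot)$ is a nonnegative quadratic form; if it is also continuous on $\mathcal{U}$, it is convex and weakly lower semicontinuous. Then
\begin{align*}
    \mathcal{B}(\mathbb{U};\mathbb{U}) \le \liminf_{n\to\infty} \mathcal{B}(\mathbb{U}_n;\mathbb{U}_n) = 0,
\end{align*}
and Proposition~\ref{Prop:APINNS_trivial_kernel_of_LS_functional} forces $\mathbb{U} = \mathrm{O}$. Consequently $p^{(1)}_n - p^{(2)}_n \to 0$ strongly in $L^2(\Omega)$. Applying Lemma~\ref{Lemma:APINNS_Garding_type_inequality} to $\mathbb{U}_n$ gives
\begin{align*}
    \alpha_0 = \alpha_0\,\|\mathbb{U}_n\|_{\mathcal{U}}^2 \le \mathcal{B}(\mathbb{U}_n;\mathbb{U}_n) + \alpha_1\,\|p^{(1)}_n - p^{(2)}_n\|^2 \longrightarrow 0,
\end{align*}
which contradicts $\alpha_0 > 0$. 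This establishes coercivity in $\|\cdot\|_{\mathcal{U}}$. Since the lemma also bounds $\|p^{(1)}-p^{(2)}\|$ by $\|\mathbb{U}\|_{\mathcal{U}}$, it further shows that $\|\cdot\|_{\mathrm{LS}}$ and $\|\cdot\|_{\mathcal{U}}$ are equivalent.

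The step I expect to be the main obstacle is the continuity of $\mathcal{B}$ on $\mathcal{U}$, which the weak lower semicontinuity relies on.
\begin{itemize}
    \item The volume terms are bounded by $\|\mathbb{U}\|_{\mathcal{U}}^2$, using $\mathbf{K}^{(i)} \in L^\infty$ together with uniform ellipticity (so $(\mathbf{K}^{(i)})^{-1}$ is bounded by $k_{\min}^{-1}$).
    \item The pressure trace term is controlled by the $H^1$ trace theorem.
    \item The velocity term $\|\mathbf{u}^{(i)}\bullet\widehat{\mathbf{n}}\|_{L^2(\Gamma_u^{(i)})}$ is \emph{not} bounded on $H(\mathrm{div};\Omega)$, whose normal traces lie only in $H^{-1/2}$.
\end{itemize}
To handle this, I would first try to work on the subspace of $H(\mathrm{div};\Omega)$ whose normal traces lie in $L^2(\Gamma_u^{(i)})$, normed with that boundary term included. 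This is the natural energy space for the least-squares functional, and it is still a Hilbert space, so the compactness argument above goes through verbatim. Alternatively, I would avoid continuity altogether and argue lower semicontinuity of each squared residual term separately. Each residual is linear in $\mathbb{U}$, and the $L^2$ norm is weakly lower semicontinuous along any sequence whose residuals converge weakly. Verifying that weak convergence for the boundary term is the point I would check most carefully.
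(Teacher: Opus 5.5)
Your observation about the literal inequality is correct and already proves the statement as written. By \eqref{Eqn:APINNS_LS_energy_functional_defnition}, $\|\mathbb{U}\|_{\mathrm{LS}}^2=\mathcal{B}(\mathbb{U};\mathbb{U})$, so $\alpha_0=1$ works. For the $\|\cdot\|_{\mathcal{U}}$ version, your argument is the paper's proof step for step. The paper likewise switches silently to $\|\cdot\|_{\mathcal{U}}$ in the negation and normalizes a sequence with $\mathcal{B}(\mathbb{U}_n;\mathbb{U}_n)\le 1/n$. It extracts a weak limit and simply asserts weak lower semicontinuity without justification. In its boundedness proof it treats the normal-trace term exactly as you propose, by building $L^2(\Gamma_u^{(i)})$ normal traces into the velocity space. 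It then kills the limit with Proposition~\ref{Prop:APINNS_trivial_kernel_of_LS_functional}, applies Rellich--Kondrachov to the pressures, and closes with Lemma~\ref{Lemma:APINNS_Garding_type_inequality}.

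The genuine gap, which you share with the paper, is not the continuity issue you flagged but that last ingredient. With $L^2$ boundary penalties the G{\aa}rding-type lemma is false, and so is coercivity in $\|\cdot\|_{\mathcal{U}}$.

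\emph{Counterexample.} Take $\Omega$ the unit disk, $\Gamma_p^{(1)}=\Gamma_p^{(2)}=\partial\Omega$, and $\mathbf{K}^{(i)}=k^{(i)}\mathbf{I}$. In polar coordinates set $p^{(1)}=p^{(2)}=r^m\sin(m\theta)$ and $\mathbf{u}^{(i)}=-(k^{(i)}/\mu)\,\mathrm{grad}[p^{(i)}]$. Both momentum residuals vanish. Both continuity residuals vanish too, since the pressure is harmonic and $p^{(1)}-p^{(2)}=0$. Hence $\mathcal{B}(\mathbb{U};\mathbb{U})=2\int_0^{2\pi}\sin^2(m\theta)\,\mathrm{d}\theta=2\pi$ and $\|p^{(1)}-p^{(2)}\|=0$ for every $m$. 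On the other hand, $\|\mathbb{U}\|_{\mathcal{U}}^2\ge 2\|\mathrm{grad}[p^{(1)}]\|^2=2\pi m$.

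\emph{Where the lemma's proof fails.} The failing step is the Friedrichs-type bound $\|\mathbf{u}^{(i)}\|\le C_{\mathrm{div}}\big(\|\mathrm{div}[\mathbf{u}^{(i)}]\|+\|\mathbf{u}^{(i)}\bullet\widehat{\mathbf{n}}\|_{L^2(\Gamma_u^{(i)})}\big)$. In the example $\Gamma_u^{(i)}=\emptyset$ and $\mathrm{div}[\mathbf{u}^{(i)}]=0$, yet $\mathbf{u}^{(i)}\neq\mathbf{0}$. Since the example also has zero momentum residual, no repair through $\mathcal{M}^{(i)}$ can help. The real obstruction is that $\|p^{(i)}\|_{L^2(\Gamma_p^{(i)})}$ is too weak to control $H^1$. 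One needs something like the $H^{1/2}(\Gamma_p^{(i)})$ norm, or pressure data imposed essentially in the space.

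\emph{Consequences.} The compactness route, yours and the paper's, cannot produce $\alpha_0\|\mathbb{U}\|_{\mathcal{U}}^2\le\mathcal{B}(\mathbb{U};\mathbb{U})$ for the functional as defined. Your closing claim that $\|\cdot\|_{\mathrm{LS}}$ and $\|\cdot\|_{\mathcal{U}}$ are equivalent also fails. That claim would in any case need boundedness of $\mathcal{B}$, not a bound on $\|p^{(1)}-p^{(2)}\|$. Only your trivial literal statement survives. It gives coercivity in the energy functional, which Proposition~\ref{Prop:APINNS_trivial_kernel_of_LS_functional} makes a norm. It does not by itself justify applying Lax--Milgram in $\mathcal{U}$ with its own norm.
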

%-----------------------;
%  Proof of coercivity  ;
%-----------------------;
\begin{proof}
    Suppose, for the sake of contradiction, that $\mathcal{B}(\cdot;\cdot)$ is \emph{not} coercive. Then the negation of Eq.~\eqref{Eqn:APINNS_Coercivity_def} asserts that
    \begin{equation}
        \label{Eqn:APINNS_Negation_coercivity}
        \forall\,\alpha_0 > 0\;\; \exists\,\mathbb{V}(\mathbf{x}) \in\mathcal{U}\setminus\{\mathrm{O}\}
        \;\text{ such that }\;
        \mathcal{B}\big(\mathbb{V}(\mathbf{x});\mathbb{V}(\mathbf{x})\big)
        < \alpha_0 \,\big\|\mathbb{V}(\mathbf{x})\big\|_{\mathcal{U}}^{2}
    \end{equation}
    For each $n\in\mathbb{N}$, apply Eq.~\eqref{Eqn:APINNS_Negation_coercivity} with
    $\alpha_0 = 1/n$. This yields a nonzero element
    $\mathbb{V}_n(\mathbf{x})\in\mathcal{U}\setminus\{\mathrm{O}\}$ such that
    \begin{equation}
        \label{Eqn:APINNS_Vn_inequality}
        \mathcal{B}\big(\mathbb{V}_n(\mathbf{x});\mathbb{V}_n(\mathbf{x})\big)
        \;\le\;
        \frac{1}{n}\,\big\|\mathbb{V}_n(\mathbf{x})\big\|_{\mathcal{U}}^{2}
    \end{equation}  
    We now normalize $\mathbb{V}_n(\mathbf{x})$ and define
    \begin{align}
        \mathbb{U}_n(\mathbf{x})
        := \frac{\mathbb{V}_n(\mathbf{x})}{\|\mathbb{V}_n(\mathbf{x})\|_{\mathcal{U}}}
        \in\mathcal{U}
    \end{align}
    Then $\|\mathbb{U}_n(\mathbf{x})\|_{\mathcal{U}} = 1$ for all $n$, and
    inequality \eqref{Eqn:APINNS_Vn_inequality} implies
    \begin{equation}\label{eq:Un_energy_small}
        \mathcal{B}\big(\mathbb{U}_n(\mathbf{x});\mathbb{U}_n(\mathbf{x})\big)
        \;=\;
        \frac{\mathcal{B}\big(\mathbb{V}_n(\mathbf{x}),\mathbb{V}_n(\mathbf{x})\big)}
         {\|\mathbb{V}_n(\mathbf{x})\|_{\mathcal{U}}^{2}}
        \;\le\;
        \frac{1}{n}
    \end{equation}
    With the notation
    \begin{align}
        \mathcal{J}\big(\mathbb{U}(\mathbf{x})\big)
        := \mathcal{B}\big(\mathbb{U}(\mathbf{x});\mathbb{U}(\mathbf{x})\big)
        \quad \mathrm{and} \quad 
        \mathbb{U}_n(\mathbf{x}) 
        \equiv \Big(p^{(1)}_n(\mathbf{x}),
        p^{(2)}_n(\mathbf{x}), 
        \mathbf{u}^{(1)}_n(\mathbf{x}), 
        \mathbf{u}^{(2)}_n(\mathbf{x})\Big) 
    \end{align}
    we, therefore, have constructed a sequence
    $\{\mathbb{U}_n(\mathbf{x})\}_{n\in\mathbb{N}}\subset\mathcal{U}$ such that
    \begin{align}
        \label{Eqns:APINNS_Coercivity_bad_seqeuence}
        \big\|\mathbb{U}_n(\mathbf{x})\big\|_{\mathcal{U}} = 1
        \quad \mathrm{and} \quad 
        \mathcal{J}\big(\mathbb{U}_n(\mathbf{x})\big) 
        := \mathcal{B}\big(\mathbb{U}_n(\mathbf{x});\mathbb{U}_n(\mathbf{x})\big) \le \frac{1}{n}
    \end{align}
    Hence,
    \begin{align}
        \mathcal{J}\big(\mathbb{U}_n(\mathbf{x})\big) 
        \longrightarrow 0
        \qquad \text{as } n \to \infty
    \end{align}

    Since $\mathcal{U}$ is a Hilbert space, it is reflexive. Therefore, by the \emph{Banach--Alaoglu theorem} (reflexive weak compactness) \citep{brezis2011functional}, every bounded sequence in $\mathcal{U}$ admits a \emph{weakly} convergent subsequence. Hence, from the boundedness of $\{\mathbb{U}_n(\mathbf{x})\}\subset\mathcal{U}$, there exist a subsequence and an element $\mathbb{U}_\infty(\mathbf{x}) \in \mathcal{U}$ such that
    \begin{align}
        \mathbb{U}_{n_k}(\mathbf{x}) \rightharpoonup \mathbb{U}_\infty(\mathbf{x}) 
        \qquad\text{weakly in }\mathcal{U}
    \end{align}
    By the product structure of $\mathcal{U}$, we may write 
    \begin{align}
        \mathbb{U}_\infty(\mathbf{x})
        =
        \Big(p^{(1)}_\infty(\mathbf{x}),
        p^{(2)}_\infty(\mathbf{x}),
        \mathbf{u}^{(1)}_\infty(\mathbf{x}),
        \mathbf{u}^{(2)}_\infty(\mathbf{x})\Big)
    \end{align}
    Since $\mathcal{J}(\cdot)$ is \emph{weakly lower semi-continuous} on $\mathcal{U}$, we obtain
    \begin{align}
        \mathcal{B}\big(\mathbb{U}_\infty(\mathbf{x});\mathbb{U}_\infty(\mathbf{x})\big)
        = \mathcal{J}\big(\mathbb{U}_\infty(\mathbf{x})\big)
        \;\le\;
        \liminf_{k\to\infty} \mathcal{J}\big(\mathbb{U}_{n_k}(\mathbf{x})\big)
        = \liminf_{k\to\infty} \; \mathcal{B}\big(\mathbb{U}_{n_k}(\mathbf{x});\mathbb{U}_{n_k}(\mathbf{x})\big)
        \;\le\; 0
    \end{align}
    where we used $\mathcal{J}\big(\mathbb{U}_n(\mathbf{x})\big)\le 1/n$ from
    Eq.~\eqref{Eqns:APINNS_Coercivity_bad_seqeuence} in the last step.
    Since $\mathcal{J}\big(\mathbb{U}(\mathbf{x})\big)\ge 0$ for all $\mathbb{U}(\mathbf{x})\in\mathcal{U}$,
    it follows that
    \begin{align}
        \mathcal{B}\big(\mathbb{U}_\infty(\mathbf{x});\mathbb{U}_\infty(\mathbf{x})\big)
        = \mathcal{J}\big(\mathbb{U}_\infty(\mathbf{x})\big) = 0
    \end{align}
    Using Proposition \ref{Prop:APINNS_trivial_kernel_of_LS_functional} (i.e., \emph{trivial kernel of the LS functional}), we conclude that 
    \begin{align}
        \label{Eqn:APINNS_Coercivity_Uinfty_zero}
        \mathbb{U}_\infty(\mathbf{x}) = \mathrm{O}
    \end{align}
    Thus far, we have established that 
    \begin{align}
        p^{(i)}_{n_k}(\mathbf{x}) \rightharpoonup p^{(i)}_{\infty}(\mathbf{x}) = 0 
        \quad \mathrm{and} \quad 
        \mathbf{u}^{(i)}_{n_k}(\mathbf{x}) \rightharpoonup 
        \mathbf{u}^{(i)}_{\infty}(\mathbf{x}) = \mathbf{0} 
    \end{align}
    as $k \to \infty$, for $i = 1,2$.

    We now recall the \emph{compact Sobolev embedding}
     \begin{align}
        H^1(\Omega) \hookrightarrow \hookrightarrow L^2(\Omega)
    \end{align}
    valid for bounded Lipschitz domains $\Omega$;
    see, e.g., \citep[Theorem~5.6]{evanspartial} 
    or \citep[Theorem~6.3]{adams2003sobolev}.
    Since the sequence $\{p_{n_k}^{(i)}(\mathbf{x})\}\subset H^{1}(\Omega)$ is bounded, the \emph{Rellich--Kondrachov theorem} \citep{evanspartial} implies that, for each $i=1,2$, there exists a further subsequence (not relabeled) such that
    \begin{equation}
        p_{n_k}^{(i)}(\mathbf{x}) \longrightarrow p_{\infty}^{(i)}(\mathbf{x})
        = 0\qquad \text{strongly in } L^{2}(\Omega)
    \end{equation}
    Therefore,
    \begin{align}
        \|p_{n_k}^{(i)}(\mathbf{x})\|_{L^{2}(\Omega)} \longrightarrow 0 
        \quad \text{as} \; k\to\infty
    \end{align}

    % Use Gårding inequality
    From the \emph{G\aa rding-type inequality}, we have 
    \begin{equation}
        \label{eq:garding}
        \alpha_0\big\|\mathbb{U}(\mathbf{x})\big\|_{\mathcal{U}}^2 
        \; \leq \; 
        \mathcal{B}\big(\mathbb{U}(\mathbf{x});\mathbb{U}(\mathbf{x})\big) 
        + \alpha_1 \, 
        \big\|p^{(1)}(\mathbf{x}) - p^{(2)}(\mathbf{x})\big\|_{L^2(\Omega)}^2
        \qquad\forall\mathbb{U}(\mathbf{x})\in\mathcal{U}
    \end{equation}
    Apply inequality \eqref{eq:garding} to $\mathbb{U}_n(\mathbf{x})$ and use $\|\mathbb{U}_n(\mathbf{x})\|_{\mathcal{U}}=1$:
    \begin{align}
        \alpha_0
        \;\le\;
        \mathcal{B}\big(\mathbb{U}_n(\mathbf{x});\mathbb{U}_n(\mathbf{x})\big)
        +
        \alpha_1 \, 
        \big\|p^{(1)}_n(\mathbf{x}) - p^{(2)}_n(\mathbf{x})\big\|_{L^2(\Omega)}^2
    \end{align}
    Using Eq.~\eqref{Eqns:APINNS_Coercivity_bad_seqeuence}, we have
    \begin{align}
        \label{Eqn:APINNS_Coercivity_intermediate_inequality}
        \alpha_0 \;\le\;
        \frac{1}{n} + \alpha_1 \, 
        \big\|p^{(1)}_n(\mathbf{x}) -p^{(2)}_n(\mathbf{x})\big\|_{L^2(\Omega)}^2
    \end{align}

    Passing to the limit $k\to\infty$ in Eq.~\eqref{Eqn:APINNS_Coercivity_intermediate_inequality} yields
    \begin{align}
        \alpha_{0} \le 0
    \end{align}
    which contradicts the assumption $\alpha_{0} > 0$. Hence, $\mathcal{B}(\cdot;\cdot)$ is strictly coercive on $\mathcal{U}$.
    %%%
\end{proof}

%==================================;
%  Remark: Weighted least-squares  ;
%----------------------------------;
\begin{remark}[Weighted least--squares]
    Most practical least--squares methods introduce positive weights $\omega_1,\dots,\omega_4>0$ and use the modified functional
    \[
        J_\omega\big(\mathbb{U}(\mathbf{x})\big) := 
        \sum_{j=1}^4 \omega_j\,\big\|R_j\big(\mathbb{U}(\mathbf{x})\big)\big\|_{L^2(\Omega)}^2,
        \qquad
        \mathcal{B}_\omega\big(\mathbb{W}(\mathbf{x});\mathbb{U}(\mathbf{x})\big)
        := \sum_{j=1}^4 \omega_j\,\Big(R_j\big(\mathbb{W}(\mathbf{x})\big),R_j(\mathbb{U}(\mathbf{x})\big)\Big)
    \]
    All results of the theorem remain valid: symmetry is preserved, continuity holds with a possibly different constant, positive definiteness is unchanged, and coercivity still holds because $\mathcal{B}_\omega\big(\mathbb{U}(\mathbf{x});\mathbb{U}(\mathbf{x})\big)$ is equivalent to $\mathcal{B}(\mathbb{U}\big(\mathbf{x});\mathbb{U}(\mathbf{x})\big)$. Thus the weighted least--squares formulation is also well--posed.
    %%%
\end{remark}

%========================;
%  Theorem: Boundedness  ;
%------------------------;
\begin{theorem}[Boundedness]
    The bilinear form $\mathcal{B}(\cdot;\cdot)$ is bounded (continuous) with respect to $\|\cdot\|_{\mathcal{U}}$. That is, there exists a constant $C_{\mathcal{B}}>0$ such that
    \begin{align}
        \label{Eqn:B_boundedness_goal}
        \big|\mathcal{B}(\mathbb{W}(\mathbf{x});\mathbb{U}(\mathbf{x}))\big|
        \le C_{\mathcal{B}}\, \|\mathbb{W}(\mathbf{x})\|_{\mathcal{U}}\, \|\mathbb{U}(\mathbf{x})\|_{\mathcal{U}}
        \qquad \forall\, \mathbb{W}(\mathbf{x}), 
        \mathbb{U}(\mathbf{x}) \in \mathcal{U}
    \end{align}
\end{theorem}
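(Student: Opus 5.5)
The plan is to bound $\mathcal{B}$ term by term. Each contribution in Eq.~\eqref{Eqn:APINNS_continuous_LS_bilinear_form} is an $L^2$ inner product (over $\Omega$ or over a boundary portion), so I apply the Cauchy--Schwarz inequality to each one and then bound each factor by the $\mathcal{U}$-norm of the corresponding argument. Summing the resulting products and using the discrete Cauchy--Schwarz inequality $\sum_j a_j b_j \le (\sum_j a_j^2)^{1/2}(\sum_j b_j^2)^{1/2}$ then yields Eq.~\eqref{Eqn:B_boundedness_goal}. The constant $C_{\mathcal{B}}$ is the largest of the individual operator bounds squared.

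\textbf{Step 1 (volume terms).} For the momentum operator, the triangle inequality and assumption (iii) give
\begin{align}
    \big\|\mathcal{M}^{(i)}(p^{(i)},\mathbf{u}^{(i)})\big\|
    \le \mu\,k_{\min}^{-1}\,\|\mathbf{u}^{(i)}\| + \|\mathrm{grad}[p^{(i)}]\|
    \le \max(C_K,1)\big(\|\mathbf{u}^{(i)}\|_{H(\mathrm{div})} + \|p^{(i)}\|_{H^1}\big).
\end{align}
Here I use that uniform ellipticity implies $\|(\mathbf{K}^{(i)})^{-1}\|_{L^\infty}\le k_{\min}^{-1}$. For the continuity operator,
\begin{align}
    \big\|\mathcal{C}^{(i)}(p^{(1)},p^{(2)},\mathbf{u}^{(i)})\big\|
    \le \|\mathrm{div}[\mathbf{u}^{(i)}]\| + \tfrac{\beta}{\mu}\big(\|p^{(1)}\| + \|p^{(2)}\|\big),
\end{align}
and this is bounded by $\max(1,\beta/\mu)\,\|\mathbb{U}\|_{\mathcal{U}}$ up to a factor of $\sqrt{3}$. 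The same bounds hold for the test field $\mathbb{W}$, which takes care of the first two integrals in $\mathcal{B}$.

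\textbf{Step 2 (pressure boundary term).} By the trace theorem on the bounded Lipschitz domain (assumption (i)), $H^1(\Omega)\to L^2(\partial\Omega)$ is continuous, so $\|q^{(i)}\|_{L^2(\Gamma_p^{(i)})}\le C_{\mathrm{tr}}\|q^{(i)}\|_{H^1}$, and likewise for $p^{(i)}$. This gives $\big|\int_{\Gamma_p^{(i)}} q^{(i)}p^{(i)}\,\mathrm{d}\Gamma\big| \le C_{\mathrm{tr}}^2\|q^{(i)}\|_{H^1}\|p^{(i)}\|_{H^1}$.

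\textbf{Step 3 (velocity boundary term) --- the main obstacle.} For $\mathbf{u}\in H(\mathrm{div};\Omega)$, the normal trace lies only in $H^{-1/2}(\partial\Omega)$, not in $L^2(\Gamma_u^{(i)})$. Consequently, the term $\int_{\Gamma_u^{(i)}}(\mathbf{w}^{(i)}\bullet\widehat{\mathbf{n}})(\mathbf{u}^{(i)}\bullet\widehat{\mathbf{n}})\,\mathrm{d}\Gamma$ cannot be bounded by the plain $H(\mathrm{div})$ norms. My first attempt will be to make explicit what the least-squares setting implicitly requires: the velocity components of $\mathcal{U}$ carry the graph norm
\begin{align}
    \|\mathbf{u}^{(i)}\|^2_{H(\mathrm{div})} + \|\mathbf{u}^{(i)}\bullet\widehat{\mathbf{n}}\|^2_{L^2(\Gamma_u^{(i)})}.
\end{align}
This requirement is consistent with the fact that $\Pi_{\mathrm{LS}}$ must be finite, and with the Friedrichs-type inequality already used in Step 1 of Lemma~\ref{Lemma:APINNS_Garding_type_inequality}, which also involves this boundary norm. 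With this norm, Cauchy--Schwarz on $\Gamma_u^{(i)}$ gives a bound with constant $1$.

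If the unmodified $H(\mathrm{div})$ norm must be kept, the fallback is to restrict to velocities whose normal traces are in $L^2(\Gamma_u^{(i)})$, with the trace norm controlled by the $\mathcal{U}$-norm (for instance, $H^1$ velocities, as for smooth network surrogates), and to note this restriction as a hypothesis. Collecting Steps 1--3 and summing over $i=1,2$ completes the proof, with
\begin{align}
    C_{\mathcal{B}} = 3\max\big(1,C_K,\beta/\mu,C_{\mathrm{tr}}\big)^2 .
\end{align}
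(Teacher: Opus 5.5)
Your proposal follows the paper's proof essentially step for step (operator bounds for $\mathcal{M}^{(i)}$ and $\mathcal{C}^{(i)}$, Cauchy--Schwarz on each volume and boundary integral, the $H^1$ trace inequality on $\Gamma_p^{(i)}$, then collecting terms). Your Step~3 is more careful than the paper's: the paper asserts $\|\mathbf{w}^{(i)}\bullet\widehat{\mathbf{n}}\|_{L^2(\Gamma_u^{(i)})}\le C_{\mathrm{tr},u}^{(i)}\|\mathbf{w}^{(i)}\|_{H(\mathrm{div};\Omega)}$ only ``provided the normal component is square-integrable,'' which does not follow from $H^{-1/2}$-continuity of the normal trace, and your graph norm is exactly the implicit hypothesis that makes that bound valid. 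The one slip is your explicit constant $C_{\mathcal{B}}=3\max(1,C_K,\beta/\mu,C_{\mathrm{tr}})^2$, which is too small. For instance, $\mathbf{u}^{(i)}=\mathbf{0}$ and $p^{(1)}=-p^{(2)}\equiv\mathrm{const}$ give $\mathcal{B}(\mathbb{U};\mathbb{U})\ge 4(\beta/\mu)^2\|\mathbb{U}\|_{\mathcal{U}}^2$, which exceeds your bound whenever $\beta/\mu$ is the largest of those quantities. Simply summing your own per-term bounds over all eight terms gives a valid constant such as $14\max(\cdot)^2$, and the argument is otherwise unchanged.
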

%------------------------;
%  Proof of boundedness  ;
%------------------------;
\begin{proof}

\textbf{Step 1: Bounds for the operators.}
For each $i=1,2$, using the definition of $\mathcal{M}^{(i)}$ in Eq.~\eqref{Eqn:APINNS_Momentum_operator},
the Cauchy--Schwarz inequality, and the boundedness of $\big(\mathbf{K}^{(i)}(\mathbf{x})\big)^{-1}$ in $L^\infty(\Omega)$,
we obtain
\begin{align}
    \label{Eqn:Mi_bound}
    \|\mathcal{M}^{(i)}\big(q^{(i)}(\mathbf{x}),\mathbf{w}^{(i)}(\mathbf{x})\big)\|_{L^2(\Omega)}
    &= \big\|\mu \big(\mathbf{K}^{(i)}(\mathbf{x})\big)^{-1}\mathbf{w}^{(i)}(\mathbf{x})
    + \mathrm{grad}[q^{(i)}(\mathbf{x})]
    \big\|_{L^2(\Omega)} 
    \nonumber\\
    &\le \mu \big\|\big(\mathbf{K}^{(i)}(\mathbf{x})\big)^{-1}\big\|_{L^\infty(\Omega)} \,\|\mathbf{w}^{(i)}(\mathbf{x})\|_{L^2(\Omega)}
    + \|\mathrm{grad}[q^{(i)}(\mathbf{x})]\|_{L^2(\Omega)} \nonumber\\
    &\le C_{M}^{(i)}\Big(\|q^{(i)}(\mathbf{x})\|_{H^1(\Omega)} 
    + \|\mathbf{w}^{(i)}(\mathbf{x})\|_{H(\mathrm{div};\Omega)}\Big)
\end{align}
where one may take $C_M^{(i)}:=\max\!\left\{\mu\big\|\big(\mathbf{K}^{(i)}(\mathbf{x})\big)^{-1}\big\|_{L^\infty(\Omega)},\,1\right\}$.

Likewise, for the continuity operator \eqref{Eqn:APINNS_Continuity_operator}, we have the following estimate:
\begin{align}
    \label{Eqn:Ci_bound}
    \|\mathcal{C}^{(i)}\big(q^{(1)},q^{(2)},\mathbf{w}^{(i)}\big)\|_{L^2(\Omega)}
    &= \big\|\mathrm{div}[\mathbf{w}^{(i)}(\mathbf{x})] \pm \frac{\beta}{\mu}\big(q^{(1)}(\mathbf{x}) - q^{(2)}(\mathbf{x})\big)\big\|_{L^2(\Omega)} 
    \nonumber\\
    &\le \|\mathrm{div}[\mathbf{w}^{(i)}(\mathbf{x})]\|_{L^2(\Omega)}
    + \frac{\beta}{\mu}\Big(\|q^{(1)}(\mathbf{x})\|_{L^2(\Omega)} + \|q^{(2)}(\mathbf{x})\|_{L^2(\Omega)}\Big) 
    \nonumber\\
    &\le C_C\Big(\|\mathbf{w}^{(i)}(\mathbf{x})\|_{H(\mathrm{div};\Omega)} + \|q^{(1)}(\mathbf{x})\|_{H^1(\Omega)} + \|q^{(2)}(\mathbf{x})\|_{H^1(\Omega)}\Big)
\end{align}
with $C_C:=\max\!\left\{1,\frac{\beta}{\mu}\right\}$.

\textbf {Step 2: Volume terms.}
Using the Cauchy--Schwarz inequality together with Eqs.~\eqref{Eqn:Mi_bound}--\eqref{Eqn:Ci_bound}, we write
% \begin{align}
%     \label{Eqn:volume_terms_bound}
%     \Big|\int_\Omega \mathcal{M}^{(i)}(q^{(i)},\mathbf{w}^{(i)})\cdot \mathcal{M}^{(i)}(p^{(i)},\mathbf{u}^{(i)})\,\mathrm{d}\Omega\Big|
%     %
%     &\le \|\mathcal{M}^{(i)}(q^{(i)},\mathbf{w}^{(i)})\|_{L^2(\Omega)}\, \|\mathcal{M}^{(i)}(p^{(i)},\mathbf{u}^{(i)})\|_{L^2(\Omega)} \nonumber\\
%     %
%     &\le (C_M^{(i)})^2 \Big(\|q^{(i)}\|_{H^1}+\|\mathbf{w}^{(i)}\|_{H(\mathrm{div})}\Big)
%     \Big(\|p^{(i)}\|_{H^1}+\|\mathbf{u}^{(i)}\|_{H(\mathrm{div})}\Big)
%     \\[0.25em]
%     \Big|\int_\Omega \mathcal{C}^{(i)}(q^{(1)},q^{(2)},\mathbf{w}^{(i)})\, \mathcal{C}^{(i)}(p^{(1)},p^{(2)},\mathbf{u}^{(i)})\,\mathrm{d}\Omega\Big|
% &\le \|\mathcal{C}^{(i)}(q^{(1)},q^{(2)},\mathbf{w}^{(i)})\|_{L^2(\Omega)}\, \|\mathcal{C}^{(i)}(p^{(1)},p^{(2)},\mathbf{u}^{(i)})\|_{L^2(\Omega)} \nonumber\\
% &\le C_C^2 \\  
% \Big(\|\mathbf{w}^{(i)}\|_{H(\mathrm{div})}+\|q^{(1)}\|_{H^1}+\|q^{(2)}\|_{H^1}\Big)\Big(\|\mathbf{u}^{(i)}\|_{H(\mathrm{div})}+\|p^{(1)}\|_{H^1}+\|p^{(2)}\|_{H^1(\Omega)}\Big).
% \end{align}
\begin{align}
\label{Eqn:volume_terms_bound}
\Big|\int_\Omega \mathcal{M}^{(i)}(q^{(i)},\mathbf{w}^{(i)})\cdot
\mathcal{M}^{(i)}(p^{(i)},\mathbf{u}^{(i)})\,\mathrm{d}\Omega\Big|
&\le \|\mathcal{M}^{(i)}(q^{(i)},\mathbf{w}^{(i)})\|_{L^2(\Omega)}
   \|\mathcal{M}^{(i)}(p^{(i)},\mathbf{u}^{(i)})\|_{L^2(\Omega)} \nonumber\\
&\le (C_M^{(i)})^2
\Big(\|q^{(i)}\|_{H^1}+\|\mathbf{w}^{(i)}\|_{H(\mathrm{div})}\Big) \nonumber\\
&\quad \times
\Big(\|p^{(i)}\|_{H^1}+\|\mathbf{u}^{(i)}\|_{H(\mathrm{div})}\Big)
\\[0.5em]
\Big|\int_\Omega \mathcal{C}^{(i)}(q^{(1)},q^{(2)},\mathbf{w}^{(i)})
\,\mathcal{C}^{(i)}(p^{(1)},p^{(2)},\mathbf{u}^{(i)})\,\mathrm{d}\Omega\Big|
&\le \|\mathcal{C}^{(i)}(q^{(1)},q^{(2)},\mathbf{w}^{(i)})\|_{L^2(\Omega)}
   \|\mathcal{C}^{(i)}(p^{(1)},p^{(2)},\mathbf{u}^{(i)})\|_{L^2(\Omega)} \nonumber\\
&\le C_C^2
\Big(\|\mathbf{w}^{(i)}\|_{H(\mathrm{div})}
     +\|q^{(1)}\|_{H^1}
     +\|q^{(2)}\|_{H^1}\Big) \nonumber\\
&\quad \times
\Big(\|\mathbf{u}^{(i)}\|_{H(\mathrm{div})}
     +\|p^{(1)}\|_{H^1}
     +\|p^{(2)}\|_{H^1(\Omega)}\Big)
\end{align}
%-------------------------;
%  Step3: Boundary terms  ;
%-------------------------;
\textbf{Step 3: Boundary terms.}
Assuming $\Omega$ is Lipschitz, the trace operator is continuous:
\[
\|q^{(i)}\|_{L^2(\Gamma_p^{(i)})}\le C_{\mathrm{tr},p}^{(i)} \|q^{(i)}\|_{H^1(\Omega)}
\]
Moreover, the normal trace on $H(\mathrm{div};\Omega)$ is continuous into $H^{-1/2}(\partial\Omega)$; in particular,
its restriction to $\Gamma_u^{(i)}$ yields the estimate
\[
\|\mathbf{w}^{(i)}\!\bullet\!\widehat{\mathbf{n}}\|_{L^2(\Gamma_u^{(i)})}
\le C_{\mathrm{tr},u}^{(i)} \|\mathbf{w}^{(i)}\|_{H(\mathrm{div};\Omega)}
\]
provided the normal component is square-integrable on $\Gamma_u^{(i)}$ (which is built into
$H(\mathrm{div};\Omega,\Gamma_u^{(i)})$). Hence, by Cauchy--Schwarz,
\begin{align}
\label{Eqn:boundary_terms_bound}
\Big|\int_{\Gamma_u^{(i)}} (\mathbf{w}^{(i)}\!\bullet\!\widehat{\mathbf{n}})\,(\mathbf{u}^{(i)}\!\bullet\!\widehat{\mathbf{n}})\,\mathrm{d}\Gamma\Big|
&\le \|\mathbf{w}^{(i)}\!\bullet\!\widehat{\mathbf{n}}\|_{L^2(\Gamma_u^{(i)})}\,
     \|\mathbf{u}^{(i)}\!\bullet\!\widehat{\mathbf{n}}\|_{L^2(\Gamma_u^{(i)})} \nonumber\\
&\le (C_{\mathrm{tr},u}^{(i)})^2 \|\mathbf{w}^{(i)}\|_{H(\mathrm{div};\Omega)}\,
                         \|\mathbf{u}^{(i)}\|_{H(\mathrm{div};\Omega)}
\\[0.25em]
\Big|\int_{\Gamma_p^{(i)}} q^{(i)}\,p^{(i)}\,\mathrm{d}\Gamma\Big|
&\le \|q^{(i)}\|_{L^2(\Gamma_p^{(i)})}\,\|p^{(i)}\|_{L^2(\Gamma_p^{(i)})} \nonumber\\
&\le (C_{\mathrm{tr},p}^{(i)})^2 \|q^{(i)}\|_{H^1(\Omega)}\,\|p^{(i)}\|_{H^1(\Omega)}
\end{align}

\textbf{Step 4: Collecting the estimates.}
Combining \eqref{Eqn:volume_terms_bound} and \eqref{Eqn:boundary_terms_bound}, summing over $i=1,2$, and using the
elementary inequality $(a+b)(c+d)\le (a^2+b^2)^{1/2}(c^2+d^2)^{1/2}$ together with the definition of $\|\cdot\|_{\mathcal{U}}$, we obtain
\begin{align}
    \big|\mathcal{B}\big(\mathbb{W}(\mathbf{x});\mathbb{U}(\mathbf{x})\big)\big|
    \le C_{\mathcal{B}}\, \big\|\mathbb{W}(\mathbf{x}\big\|_{\mathcal{U}} \, \big\|\mathbb{U}(\mathbf{x})\big\|_{\mathcal{U}}
\end{align}
where $C_{\mathcal{B}}>0$ depends only on
$\mu$, $\beta/\mu$, the $L^\infty$ bounds of $\big(\mathbf{K}^{(i)}(\mathbf{x})\big)^{-1}$, the trace constants for $\Gamma_u^{(i)}$
and $\Gamma_p^{(i)}$, and the domain regularity. This proves the boundedness claim \eqref{Eqn:B_boundedness_goal}.
\qedhere
\end{proof}

By the Lax--Milgram theorem, there exists a unique solution
\(\mathbb{U}(\mathbf{x}) \in \mathcal{U}\) such that
\begin{align}
    \big\|\mathbb{U}(\mathbf{x})\big\|_{\mathcal{U}}
    \leq \frac{1}{\alpha_0}\, \big\|l\big\|_{\mathcal{U}'}
\end{align}
where \(\mathcal{U}'\) denotes the dual space of \(\mathcal{U}\), and \(\alpha_0 > 0\) is the coercivity constant of the associated bilinear form.
% The main analytical difficulty in the convergence analysis is the infinite-dimensional problem is well-posed; in particular, $\mathbb{U}(\mathbf{x})$ exists and is unique which has been established above. Once this property is available, the remainder steps are as follows:
% \begin{itemize}
%     \item Introducing the continuous minimizer $\mathbb U_{\mathrm{dl}}$ obtained by restricting the least-squares functional $\Pi_{\mathrm{LS}}$ to the network class $\mathcal N$.
%     \item Analyzing the discrete empirical functional $\Pi_M$ obtained from collocation or sampling and the associated trained minimizer $\mathbb U_M$.
%     \item Combining approximation properties of neural networks with stability of the least-squares functional to convert bounds on the functional suboptimality into estimates for the error $|\mathbb U_M-\mathbb U^\star|_{\mathcal U}$
% \end{itemize}

% While the objective functional used here differs from that of in \citet{maduri2026deepLS}, the overall structure of the convergence argument—namely the decomposition into approximation and discretization errors together with a stability argument for the continuous problem—is analogous. For this reason, and since the necessary modifications are straightforward once uniqueness of $\mathbb{U}(\mathbf{x})$ is available, we omit the technical details and refer the reader to \citet{maduri2026deepLS}.
The existence and uniqueness of the exact minimizer $\mathbb{U}(\mathbf{x})$ for the continuous infinite-dimensional least-squares problem have been established above. Once this property is available, the remainder of the convergence analysis proceeds as follows:
\begin{itemize}
    \item \textbf{Step 1.} Introduce the admissible neural-network class $\mathcal N \subset \mathcal U$ and define the corresponding restricted continuous minimizer $\mathbb U_{\mathrm{dl}} \in \mathcal N$ as the minimizer of the continuous least-squares functional over $\mathcal N$.

    \item \textbf{Step 2.} Replace the continuous least-squares functional by its empirical counterpart $\Pi_M$, obtained from the collocation or sampling procedure, and define the associated trained minimizer $\mathbb U_M \in \mathcal N$ as the minimizer of $\Pi_M$.

    \item \textbf{Step 3.} Decompose the total error $\|\mathbb U_M-\mathbb{U}\|_{\mathcal U}$ into an approximation component, arising from the restriction to the network class $\mathcal N$, and a discretization component, arising from the empirical approximation of the continuous functional.

    \item \textbf{Step 4.} Use approximation properties of the neural-network class together with consistency of the empirical functional to control these two error contributions at the level of the least-squares objective.

    \item \textbf{Step 5.} Invoke stability of the continuous least-squares formulation to convert bounds on functional suboptimality into convergence estimates in the solution norm.
\end{itemize}

Finally, while the present least-squares functional differs from that considered in \citet{maduri2026deepLS}, the proof strategy remains unchanged in structure. For the sake of brevity, we omit the technical details and refer the reader to Section~4 of \citet{maduri2026deepLS}.

    %*********************************************;
%                                             ;
%  NAME                                       ;
%    S5_APINNs_NR.tex                         ;
%                                             ;
%  WRITTEN BY                                 ;
%    Kalyana B. Nakshatrala                   ;
%                                             ;
%*********************************************;
\section{REPRESENTATIVE NUMERICAL RESULTS}
\label{Sec:S5_APINNs_NR}
This section presents a comprehensive evaluation of the proposed modeling framework. We showcase results on a set of well-established benchmark problems from the literature, covering a wide range of physical scenarios and boundary conditions. 

%========================================;
%  Subsection: One-dimensional problems  ;
%========================================;
\subsection{One-dimensional problems}
To begin, we consider two distinct one-dimensional problems. The first is a pressure-driven boundary value problem, in which the solution fields under the DPP model resemble those predicted by the classical Darcy equations. Specifically, there is no mass transfer between the pore networks, and the flow in each network is governed by Darcy’s law. The second is a mixed boundary value problem, where the flow behavior differs fundamentally from that described by the Darcy model; in this case, mass transfer occurs between the pore networks.

%=========================================;
%  Subsubsection: 1D pressure-driven BVP  ;
%=========================================;
\subsubsection{1D pressure-driven boundary value problem} 
\textbf{Figure~\ref{Fig:APINNs_1D_Pressure_BVP}} illustrates pressure-driven flow through a porous medium with a dual-pore network. Macro- and micro-pressures, prescribed on the left and right boundaries, drive the flow. This problem is motivated by the well-known challenges of traditional methods, such as mixed finite element formulations, which often require stabilization when equal-order interpolation is used for all field variables to satisfy the Ladyzhenskaya–Babuška–Brezzi (LBB) stability condition \citep{brezzi2012mixed}. This naturally raises the question of whether the proposed framework is likewise susceptible to such stability constraints, or whether it inherently bypasses them through its architecture and training dynamics.

To investigate this question, we implement the proposed framework using the following neural network architecture and training strategy. The neural network used here comprises six hidden layers with 64 neurons each, employs the hyperbolic tangent (\texttt{tanh}) activation function, and is trained initially with the Adam optimizer (learning rate $1 \times 10^{-3}$), followed by the quasi-Newton L-BFGS algorithm to refine the solution. Table~\ref{Tab:APINNS_1D_Pressure_BVP} lists the material properties, geometric parameters, and boundary conditions used in the numerical simulation.

\textbf{Figure~\ref{fig:APINNs_1D_Pressure_Result}} shows that the solution fields computed with the proposed framework closely match the numerical solution obtained with the stabilized mixed formulation reported in \citet{joodat2018modeling}.

%------------------------------------;
%  Figure 4: 1D BVP pressure-driven  ;
%------------------------------------;
\begin{figure}[h!]
    \centering
    \includegraphics[width=0.5\linewidth]{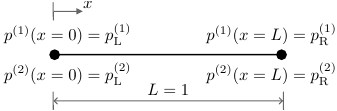}
    \caption{\textsf{1D pressure-driven boundary value problem.} The macro/micro-level pressure boundary conditions are prescribed at both ends.}
    \label{Fig:APINNs_1D_Pressure_BVP}
\end{figure}

%----------------------------------------------;
%  Table 1: Parameters for 1D pressure-driven  ;
%----------------------------------------------;
\begin{table}[htbp]
    \centering
    \caption{\textsf{1D pressure-driven boundary value problem.} Numerical simulation parameters. \label{Tab:APINNS_1D_Pressure_BVP}}
    \begin{tabular}{|c|c|}
        \hline
        \textbf{Parameter} & \textbf{Value} \\ \hline
        $L$         & 1 \\ 
        $\beta$     & 1 \\ 
        $\mu$       & 1 \\ 
        $\gamma \mathbf{b}$  & 0 \\ 
        $k^{(1)}$   & 1    \\ 
        $k^{(2)}$   & 0.01 \\ 
        $p_{\mathrm{L}}^{(1)}$ & 10 \\ %\hline
        $p_{\mathrm{L}}^{(2)}$ & 1 \\ %\hline
        $p_{\mathrm{R}}^{(1)}$ & 10 \\ %\hline
        $p_{\mathrm{R}}^{(2)}$ & 1 \\ \hline
    \end{tabular}
\end{table}

%--------------------------------------------;
%  Figure 5: 1D BVP pressure-driven results  ;
%--------------------------------------------;
\begin{figure}[h!]
    \centering
    \includegraphics[width=0.90\linewidth]{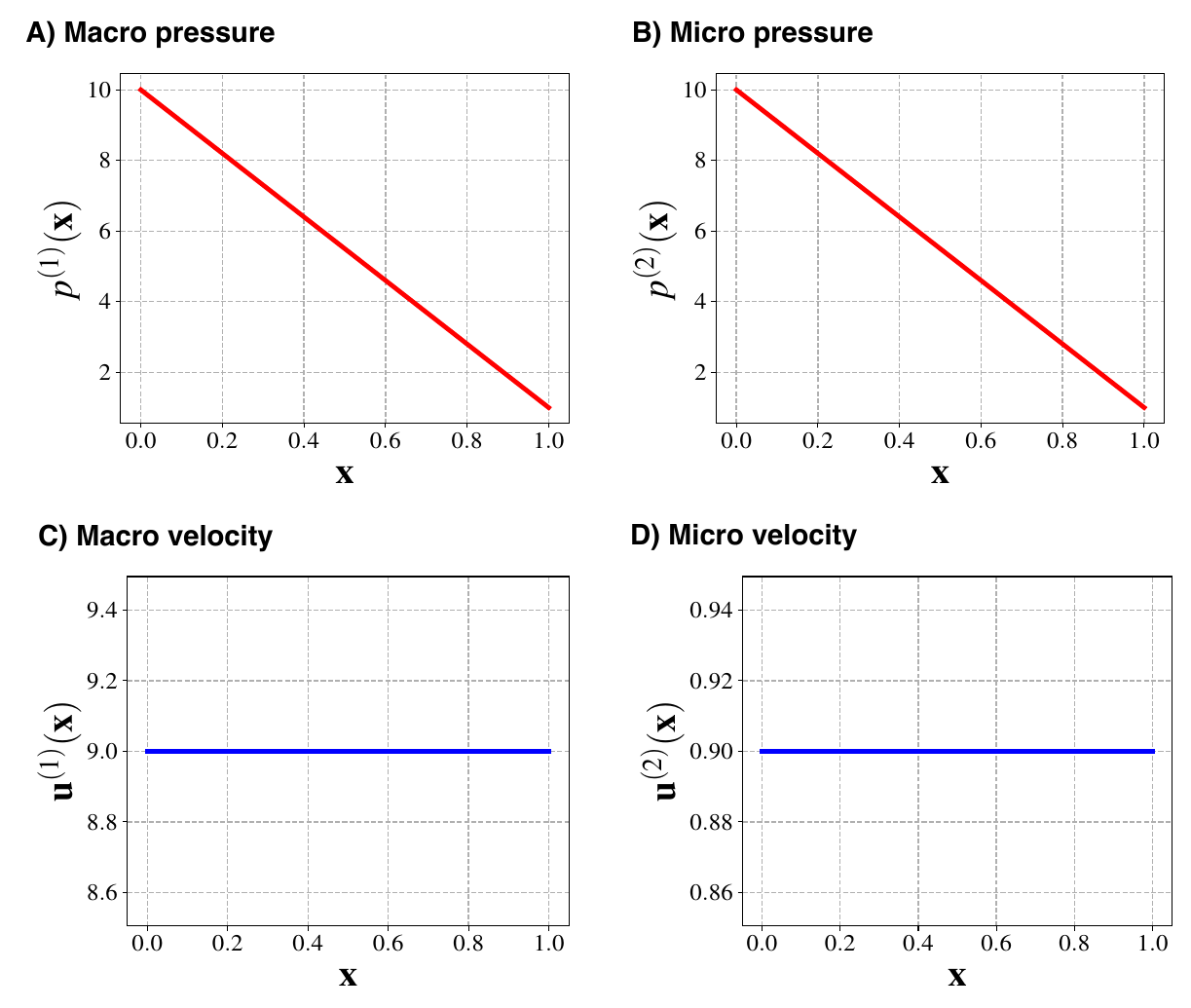}
    \caption{\textsf{1D pressure-driven boundary value problem.} This figure shows the solution fields obtained using the proposed modeling framework. These results show excellent agree with the finite element solutions reported in \citet{joodat2018modeling}.}
    \label{fig:APINNs_1D_Pressure_Result}
\end{figure}

%===============================;
%  Subsubsection: 1D mixed BVP  ;
%===============================;
\subsubsection{1D mixed boundary value problem} 
In this problem, the macro-pore network is subjected to prescribed pressures at the left and right boundaries---similar to the previous case---while the micro-pore network is sealed (i.e., a no-flux boundary condition, meaning zero normal flow). \textbf{Figure~\ref{Fig:APINNS_1D_Mixed_BVP}} provides a pictorial illustration. Although this setup involves only minor modifications to the previous boundary value problem, it offers valuable insights for two key reasons. 
\begin{enumerate}
    \item It reveals that the micro-pore network can sustain internal discharge even in the absence of boundary outflow, highlighting the importance of characterizing internal pore architecture rather than relying solely on surface-level observations.
    \item It underscores a fundamental difference between the solution fields predicted by the classical Darcy equations and those from the DPP model. In a one-dimensional problem with prescribed velocity boundary conditions at both ends, the incompressibility constraint---together with the divergence theorem---requires the boundary velocities to be equal. Under this setup, the Darcy model predicts a uniform velocity field throughout the domain. In contrast, the DPP model permits a non-uniform velocity in the micro-pore network, due to internal mass exchange between the two pore systems \citep{nakshatrala2018modeling}.
\end{enumerate}

To validate these insights, we apply the proposed framework to this benchmark problem with simulation parameters provided in Table \ref{Tab:APINNS_1D_Mixed_BVP}. The neural network parameters remain as those of the earlier problem. \textbf{Figure \ref{Fig:APINNS_1D_Mixed_BVP_result}} shows the computed micro-velocity and volumetric transfer rate profiles, which closely match the analytical solutions reported by \citet{nakshatrala2018modeling}, demonstrating the framework’s accuracy and robustness.

%--------------------------;
%  Figure 6: 1D mixed BVP  ;
%--------------------------;
\begin{figure}[h!]
    \centering
    \includegraphics[width=0.5\linewidth]{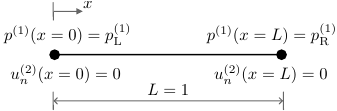}
    \caption{\textsf{1D mixed boundary value problem.} For the macro pore-network, pressure is prescribed at both ends of the domain, while there is no fluid discharge from the micro pore-network at either end. This boundary value problem is referred to as mixed, as pressure boundary conditions are applied to the macro pore-network and velocity boundary conditions to the micro pore-network. These mixed boundary conditions give rise to a fluid flow that is characteristically different from that in a single pore-network governed by Darcy equations.}
    \label{Fig:APINNS_1D_Mixed_BVP}
\end{figure}

%----------------------------------------;
%  Table 2: Parameters for 1D mixed BVP  ;
%----------------------------------------;
\begin{table}[h!]
     \caption{\textsf{1D mixed boundary value problem.} Parameters used in the numerical simulation. All other parameter values remain the same as those listed in Table \ref{Tab:APINNS_1D_Pressure_BVP}, except for $p^{(2)}_{\mathrm{L}}$ and $p^{(2)}_{\mathrm{R}}$, which are not applicable for this boundary value problem. \label{Tab:APINNS_1D_Mixed_BVP}}
    \centering
    \begin{tabular}{|c|c|}
        \hline
        \textbf{Parameter} & \textbf{Value} \\ \hline
        $u^{(2)}_{n}(x= 0)$ & 0 \\ 
        $u^{(2)}_{n}(x= L)$ & 0 \\ \hline
    \end{tabular}
\end{table}

%------------------------------------;
%  Figure 7: 1D mixed BVP (results)  ;
%------------------------------------;
\begin{figure}[h!]
    \centering
    \includegraphics[width=0.9\linewidth]{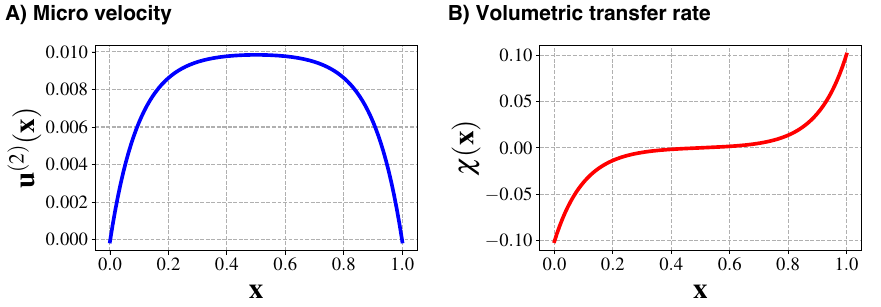}
    \caption{\textsf{1D mixed boundary value problem.} This figure shows the micro-velocity and mass transfer results obtained using the proposed modeling framework. These results are in good agreement with the analytical solution provided by \citet{nakshatrala2018modeling}. Furthermore, the results highlight a key feature of the DPP model: even in the absence of discharge at the boundary of the micro-pore network, internal flow can occur within it due to mass transfer between the pore networks inside the domain.}
    \label{Fig:APINNS_1D_Mixed_BVP_result}
\end{figure}

%===========================================;
%  Subsection: 2D BVP with radial symmetry  ;
%===========================================;
\subsection{Two-dimensional problem with radial symmetry}
\label{sec:APINNS_Radial_Symmetry}
This problem is selected to evaluate whether the proposed framework can capture the inherent radial (cylindrical) symmetry. It is motivated by the operating principle of candle filters, which are commonly used in water purification systems \citep{dickenson1997filters}.

The computational domain is an annular disc, representing a porous medium with two interconnected pore networks, bounded by an inner radius $r = r_i = 0.3$ and an outer radius $r_o = 1$. As illustrated in \textbf{Fig.~\ref{Fig:APINNs_Candle_filter_BVP}}, a uniform macro-pressure of unit magnitude is applied at the inner boundary (i.e., $p^{(1)}\vert_{r = r_i} = 1$), serving as the driving force for filtration. The outer boundary is exposed to ambient conditions, modeled by enforcing zero pressure (i.e., $p^{(1)}\vert_{r = r_o} = 0$). In the micro-pore network, no flow is permitted across either boundary; this condition is mathematically expressed as $\mathbf{u}^{(2)}(\mathbf{x})\bullet\widehat{\mathbf{n}}(\mathbf{x}) = u_{n}^{(2)} = 0$ at $r = r_i$ and $r = r_o$. 

%-----------------------------------;
%  Figure 8: Candle filter problem  ;
%-----------------------------------;
\begin{figure}[h!]
    \centering
    \includegraphics[width=0.55\linewidth]{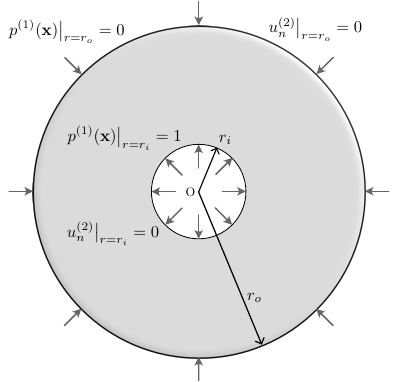}
    \caption{\textsf{Two-dimensional problem with radial symmetry.} The flow is driven by the pressure difference in the macro-pore network at the outer surface ($p^{(1)}\vert_{r = r_o} = 1$) and the inner surface ($p^{(1)}\vert_{r = r_i} = 0$). For the micro-pore network, both the inner and outer surfaces do not permit any fluid discharge. However, there is fluid exchange between the two networks within the domain.}
    \label{Fig:APINNs_Candle_filter_BVP}
\end{figure}

%-------------------------------------------;
%  Table 3: Parameters for radial symmetry  ;
%-------------------------------------------;
\begin{table}[h!]
     \caption{\textsf{Two-dimensional problem with radial symmetry.} Numerical simulation parameters. \label{Tab:APINNS_Params_for_radial_symmetry}}
    \centering
    \begin{tabular}{|c|c|}
        \hline
        \textbf{Parameter} & \textbf{Value} \\ \hline
        $r_i$ & 0.3 \\
        $r_o$ & 1.0 \\
        $\beta$ & 1\\
        $\mu$ & 1\\
        $\gamma\mathbf{b}$ & $\mathbf{0} = [0, 0]$\\
        $k^{(1)}$ & 1\\
        $k^{(2)}$ & 0.01\\
        $p^{(1)}_{\mathrm{p}}(r = r_i)$ & 1 \\
        $p^{(1)}_{\mathrm{p}}(r = r_o)$ & 0 \\
        $u^{(2)}_{n}(r= r_i)$ & 0 \\ 
        $u^{(2)}_{n}(r = r_o)$ & 0 \\ \hline
    \end{tabular}
\end{table}

% which offers smoother gradients and better expressiveness than traditional functions like \texttt{tanh} or \texttt{ReLU}, aiding in capturing smooth, radially symmetric features \citep{elfwing2018sigmoid}
The neural network uses eight hidden layers with 128 neurons each and the \texttt{ReLU} activation function. Table~\ref{Tab:APINNS_Params_for_radial_symmetry} summarizes the model parameters, including material properties, geometry, and boundary conditions. The total training time on an NVIDIA T4 GPU was 6.17 minutes.

\textbf{Figure~\ref{Fig:APINNs_Candle_filter_results}} displays the numerically computed pressure fields and velocity magnitudes, while \textbf{Fig.~\ref{Fig:APINNs_Candle_filter_volumetric_transfer_results}} illustrates $\chi(\mathbf{x})$, the volumetric transfer rate from the micro-pore to the macro-pore network. The results show excellent agreement with the analytical solution presented in \citet{nakshatrala2018modeling} and closely match the finite element simulations reported in \citet{joodat2018modeling}, thereby confirming the accuracy and robustness of the proposed method. \textbf{Figure~\ref{fig:L2-Error_Radial_Symmetry}} presents the $L_{2}$ error between the analytical solution and the neural-network-predicted solution, evaluated across varying combinations of network depth and width.

%---------------------------------------------;
%  Figure 9: Candle filter problem (results)  ;
%---------------------------------------------;
\begin{figure}
    \centering
    \includegraphics[width=0.8\linewidth]{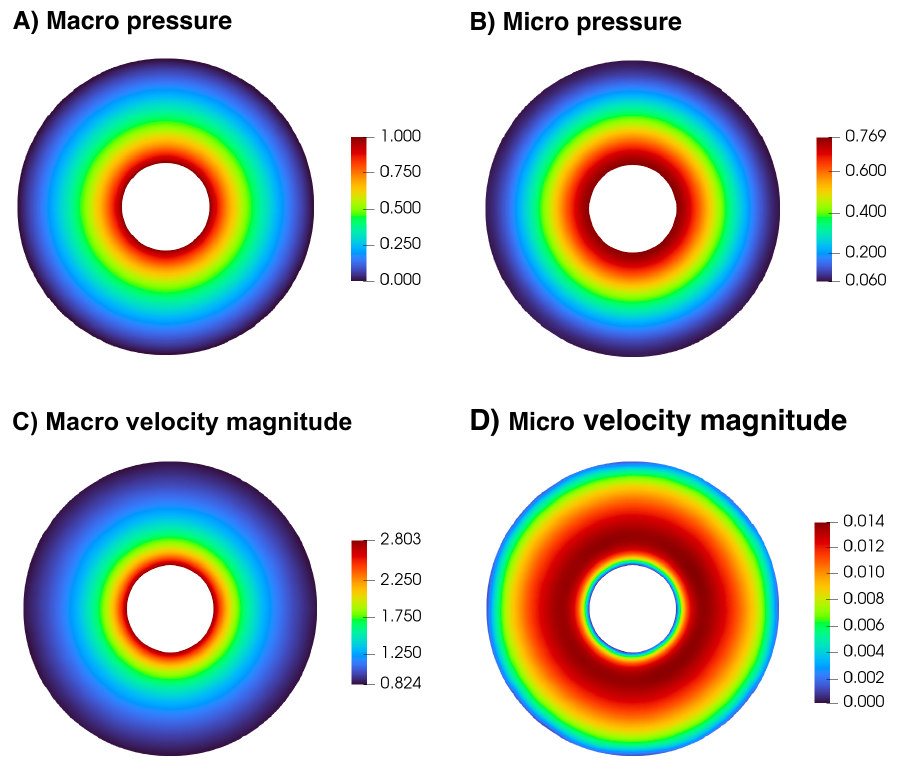}
    \caption{\textsf{Two-dimensional problem with radial symmetry.} This figure shows the solution fields obtained using the proposed modeling framework. Pressure and velocity boundary conditions are imposed in a soft manner---that is, they are not hard-coded into the architecture but are enforced through the loss function. Two key observations: (a) radial symmetry is preserved, and (b) internal flow exists in the micro-pore network despite no boundary discharge, highlighting inter-network exchange.}
\label{Fig:APINNs_Candle_filter_results}
\end{figure}

%-----------------------------------------------------;
%  Figure 10: Candle filter volumetric transfer rate  ;
%-----------------------------------------------------;
\begin{figure}
    \centering
    \includegraphics[width=0.425\linewidth]{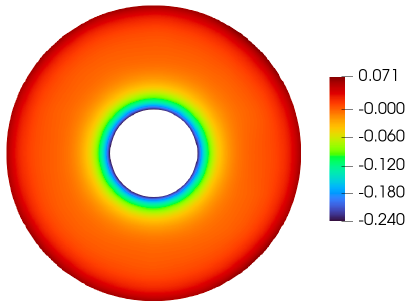}
    \caption{\textsf{Two-dimensional problem with radial symmetry.} This figure shows the volumetric transfer rate, $\chi(\mathbf{x})$, from the micro- to macro-pore network, computed using the proposed modeling  framework. The solution preserves radial symmetry.}
    \label{Fig:APINNs_Candle_filter_volumetric_transfer_results}
\end{figure}

%-------------------------------------;
%  Figure 11: Width-depth refinement  ;
%-------------------------------------;
\begin{figure}
    \centering
    \includegraphics[width=0.9\linewidth]{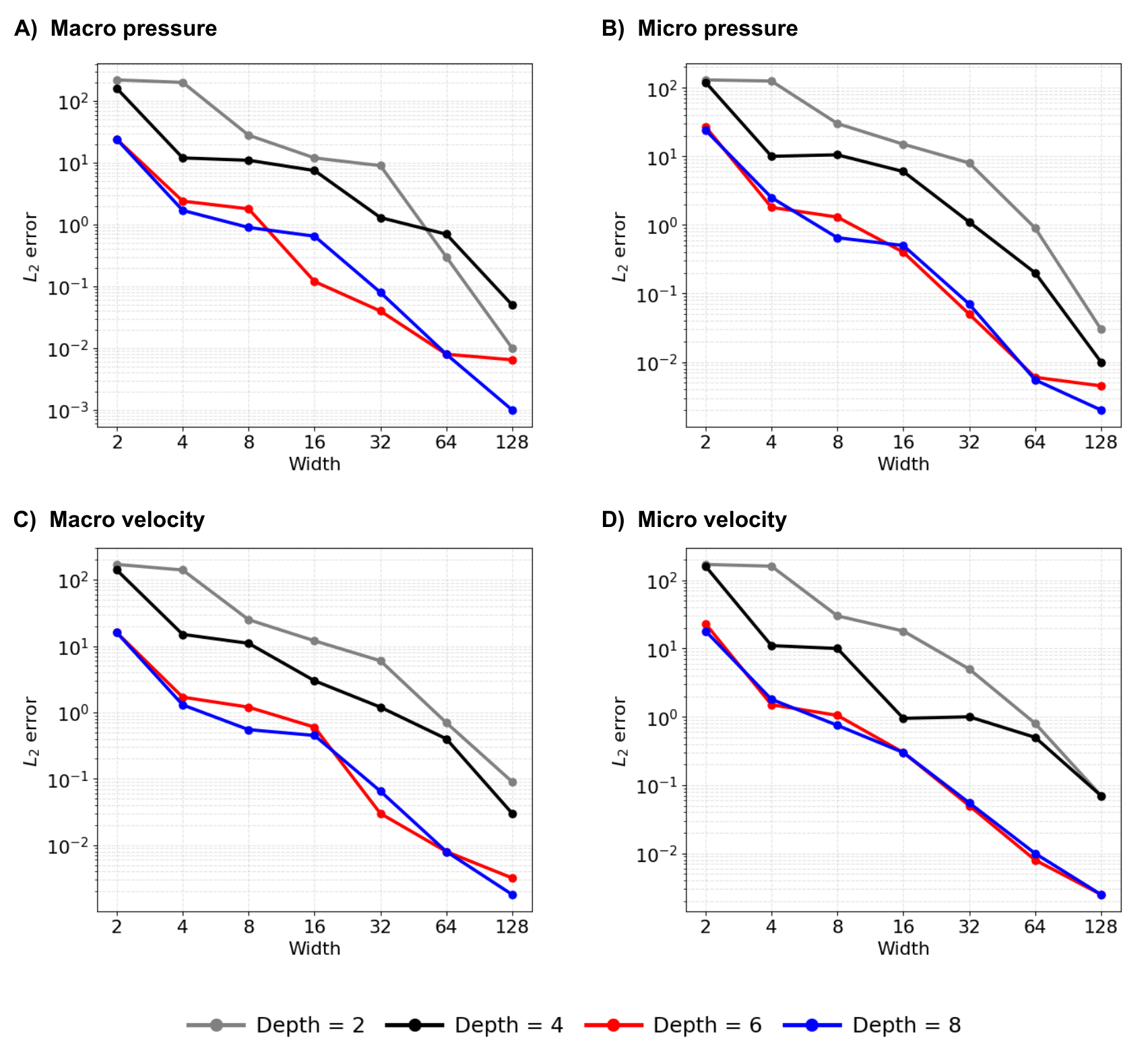}
    \caption{\textsf{Two-dimensional problem with radial symmetry:} Width--depth refinement study. The figure shows the $L_{2}$ errors in the predicted solution fields---macro pressure $p^{(1)}(r)$, micro pressure $p^{(2)}(r)$, and the corresponding velocities: $u^{(1)}(r)$ and $u^{(2)}(r)$---as functions of the neural network width and depth. The results show a systematic reduction in error as the network capacity increases. In particular, architectures with both greater width and depth consistently achieve the lowest errors, indicating improved agreement with the analytical solution.}
    \label{fig:L2-Error_Radial_Symmetry}
\end{figure}

%==============================;
%  Subsection: Layered medium  ;
%==============================;
\subsection{Two-dimensional layered media problem} Many geological formations exhibit abrupt property changes over small regions, varying by orders of magnitude. This boundary value problem highlights the proposed framework's robustness in handling such heterogeneity. Traditional methods like finite elements often require stabilized approaches---such as discontinuous Galerkin (DG) methods \citep{joshaghani2019stabilized}---to ensure stability and accuracy across high-contrast interfaces.

The computational domain consists of five horizontal layers, each 5.0 units wide and 0.8 units high, with distinct macro- and micro-permeabilities. As shown in \textbf{Fig.~\ref{Fig:APINNs_Layered_medium_BVP}}, constant normal velocities are applied at the left and right boundaries of each layer, computed as the permeability-to-viscosity ratio—negative on the left, positive on the right. The top and bottom boundaries impose zero normal components for both macro- and micro-velocities. Neural network parameters are identical to those used in the previous problem (\S\ref{sec:APINNS_Radial_Symmetry}). Model parameters are listed in Table~\ref{Tab:APINNS_2D_Layered_Media}.

 The results, shown in \textbf{Figs. \ref{Fig:APINNs_Layered_media_results_1}} and \textbf{\ref{Fig:APINNs_Layered_media_results_2}}, demonstrate excellent agreement between the proposed framework, the analytical solution, and the DG-based numerical results from \citet{joshaghani2019stabilized}. This strong alignment validates the framework's effectiveness in capturing discontinuities and layer-wise behavior in heterogeneous porous media, eliminating the need for a DG formulation. \textbf{Figure \ref{fig:APINNS_Layered_Media_Loss_vs_Ephos}} shows the total loss (log scale) over training epochs, with a sharp initial drop and gradual improvements during RAR rounds, highlighting the robust convergence of the proposed modeling framework. These results were obtained using a neural network with eight hidden layers and 128 neurons per layer, trained with the \texttt{ReLU} activation function; the total training time on an NVIDIA T4 GPU was 6.28 minutes.

%-----------------------------------;
%  Figure 12: Layered medium (BVP)  ;
%-----------------------------------;
\begin{figure}[h]
    \centering
    \includegraphics[width=0.9\linewidth]{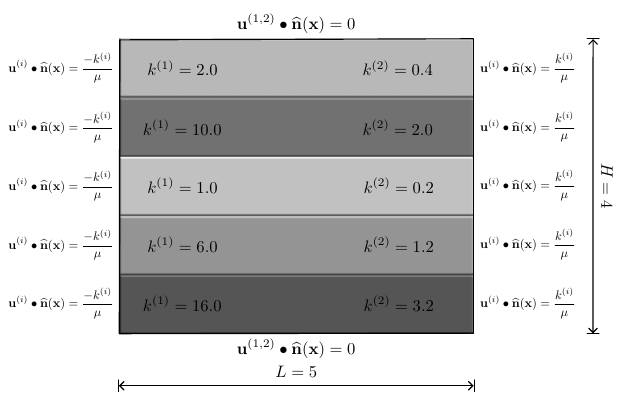}
    \caption{\textsf{Two-dimensional layered media problem.} This figure illustrates the computational domain, boundary conditions, and the macro- and micro-permeabilities of each layer.}
    \label{Fig:APINNs_Layered_medium_BVP}
\end{figure}

%------------------------------------;
%  Table 4: Layered medium (params)  ;
%------------------------------------;
\begin{table}[htbp]
    \centering
    \caption{\textsf{Two-dimensional layered media problem.} Numerical simulation parameters. \label{Tab:APINNS_2D_Layered_Media}}
    \begin{tabular}{|c|c|}
        \hline
        \textbf{Parameter} & \textbf{Value} \\ \hline
        $L_{x}$         & 5 \\ 
        $L_{y}$         & 4 \\ 
        $\beta$     & 1 \\ 
        $\mu$       & 1 \\ 
        $\gamma \mathbf{b}$  & $\mathbf{0} = [0, 0]$ \\ 
        $k^{(1)}$ and $k^{(2)}$ & Provided in Fig.~\ref{Fig:APINNs_Layered_medium_BVP} \\
        \hline
    \end{tabular}
\end{table}

%------------------------------------------------;
%  Figure 13: Layered medium (results contours)  ;
%------------------------------------------------;
\begin{figure}[h]
    \centering
    \includegraphics[width=0.85\linewidth]{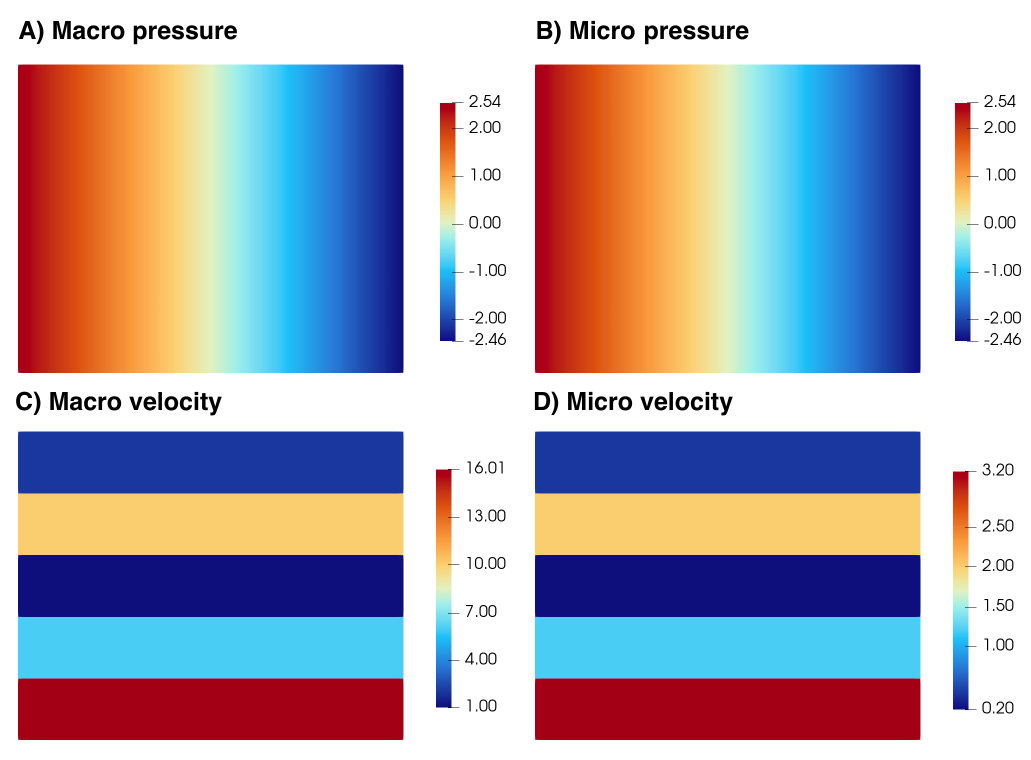}
    \caption{\textsf{Two-dimensional layered media problem.} This figure shows solution fields from the proposed model. Velocities are constant within each layer, and pressures vary linearly along the horizontal axis, matching the exact solution. This confirms the framework passes the velocity-driven patch test.}
    \label{Fig:APINNs_Layered_media_results_1}
\end{figure}

%--------------------------------------------------;
%  Figure 14: Layered medium (results line plots)  ;
%--------------------------------------------------;
\begin{figure}[h!]
    \centering
    \includegraphics[width=0.8\linewidth]{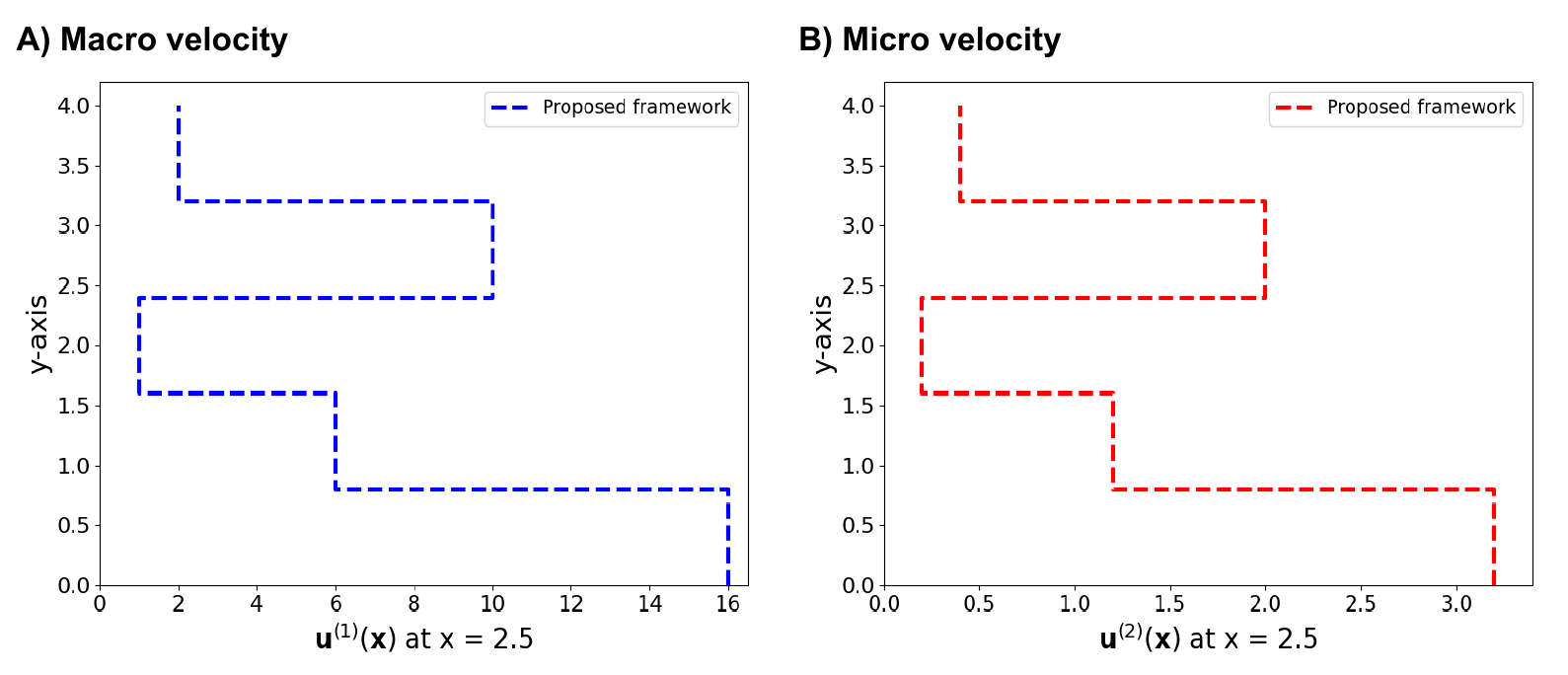}
    \caption{\textsf{Two-dimensional layered media problem.} This figure shows macro- and micro-pore velocity profiles at $x = 2.5$ from the proposed framework, aligning well with the DG formulation by \citet{joshaghani2019stabilized}.}
    \label{Fig:APINNs_Layered_media_results_2}
\end{figure}

%----------------------------------------------;
%  Figure 15: Layered medium (error log plot)  ;
%----------------------------------------------;
\begin{figure}
    \centering
    \includegraphics[width=0.8\linewidth]{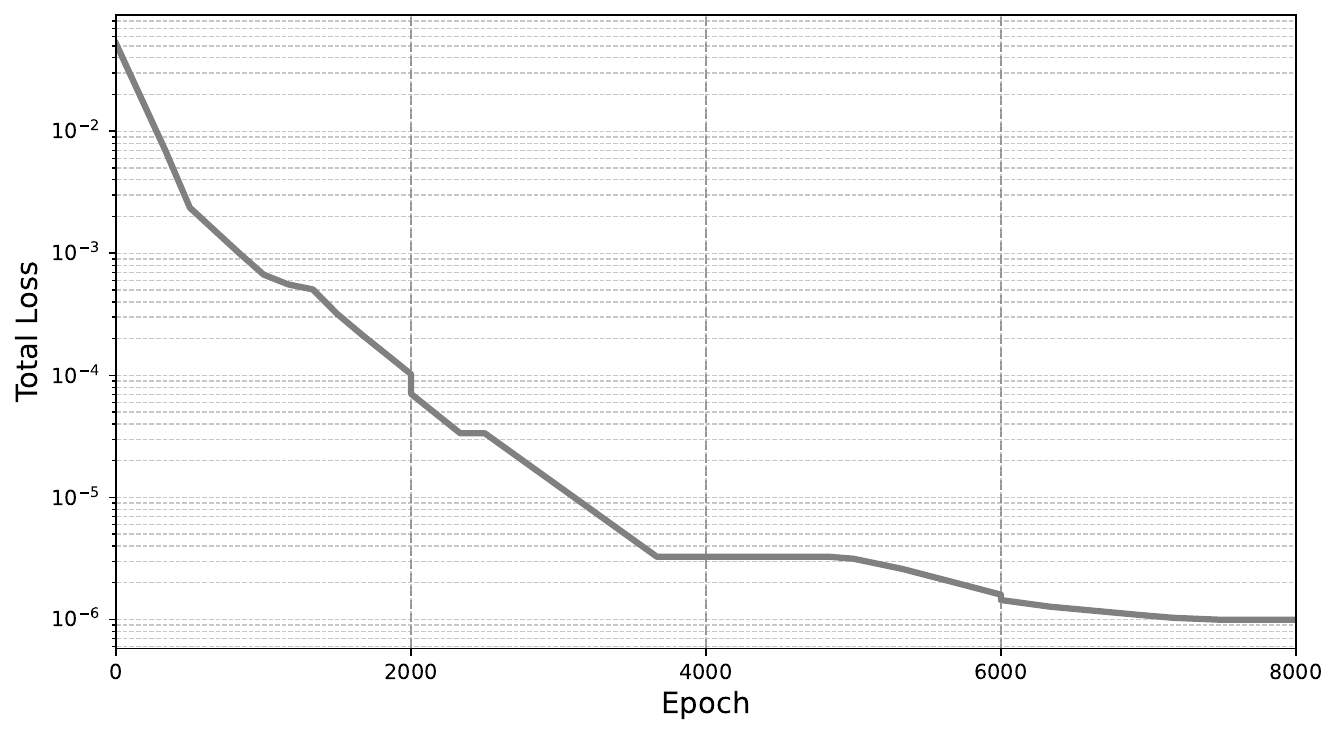}
    \caption{\textsf{Two-dimensional layered media problem.} The plot shows total loss or the layered-media problem under residual-based adaptive refinement (RAR). The gray curve shows a smooth, monotone decay of the training objective over epochs, with vertical dashed lines marking the transitions between successive RAR rounds. The network is comprised of 8 hidden layers with 128 neurons per layer and a SiLU activation function.}
    \label{fig:APINNS_Layered_Media_Loss_vs_Ephos}
\end{figure}

%========================================;
%  Subsection: 2D strip footing problem  ;
%========================================;
\subsection{Two-dimensional strip footing problem}
In geotechnical settings such as strip footings and retaining walls, overburden pressure induces flow-driving gradients. The boundary value problem considered here serves as a key benchmark for evaluating the proposed framework’s ability to handle such practical scenarios. 

Consider a rectangular domain of length $L = 10 \, \text{m}$ and height $H = 5 \, \text{m}$, as shown in \textbf{Fig.~\ref{Fig:APINNs_Footing_BVP}}. Zero normal velocity is imposed on the left, right, and bottom boundaries for both macro- and micro-pore flow. The top boundary is divided into three segments: $T_{1}\;(0 \le x \le 2.5\,\text{m})$, $T_{2}$ $(2.5 < x \le 7.5\,\text{m})$, and $T_{3}$ $(7.5 < x \le 10 \, \text{m})$, with boundary conditions specified in the figure. Model parameters used in the simulation are listed in Table~\ref{Tab:APINNS_2D_strip_footing}.

\textbf{Figure.~\ref{Fig:APINNs_Footing_Results_P_and_V}} illustrates the predicted field variables. \textbf{Figure.~\ref{Fig:APINNs_Footing_Results_Chi}} shows predicted volumetric transfer rate and the corresponding absolute error with respect to the finite element formulation as in \citep{joodat2018modeling}. These results were obtained using a neural network with eight hidden layers and 256 neurons per layer, trained with the \texttt{ReLU} activation function; the total training time on an NVIDIA T4 GPU was 9.36 minutes. 

%------------------------------------------;
%  Figure 16: Strip footing problem (BVP)  ;
%------------------------------------------;
\begin{figure}[h!]
    \centering
    \includegraphics[width=0.75\linewidth]{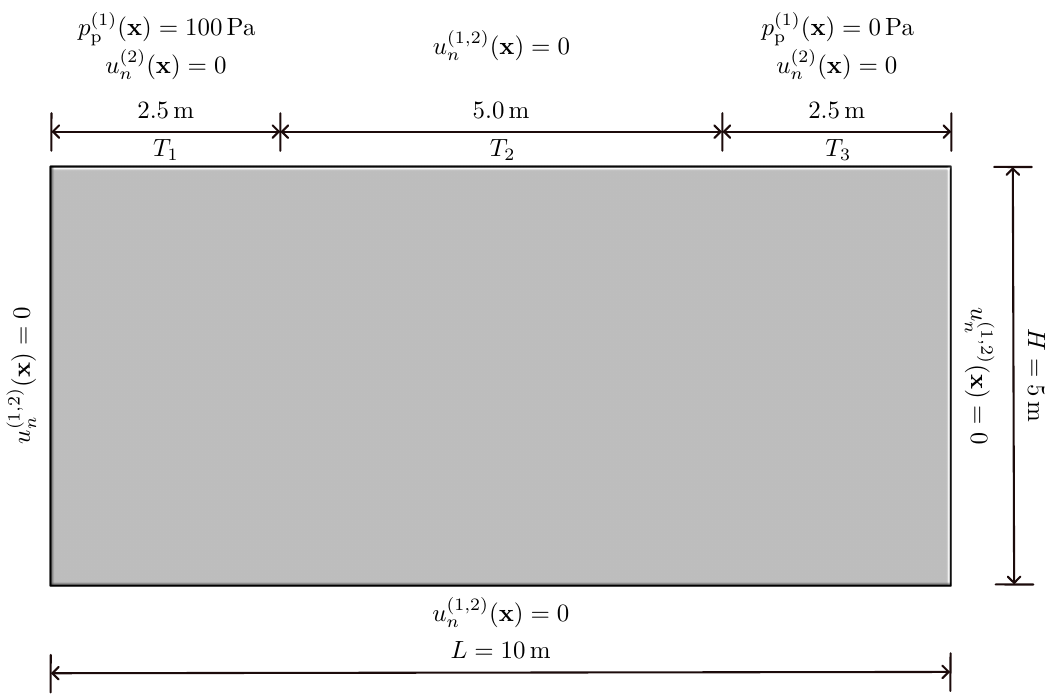}
    \caption{\textsf{Two-dimensional strip footing problem.} Schematic of the boundary value problem. Along the left, right, and bottom edges the normal components of the macro- and micro-pore velocities are zero ($\mathbf u^{(1)}(\mathbf{x})\, \bullet \, \widehat{\mathbf{n}}(\mathbf{x}) = \mathbf u^{(2)}(\mathbf{x})\, \bullet \, \widehat{\mathbf{n}}(\mathbf{x}) = 0$), rendering these boundaries impermeable. 
    The top boundary is divided into three segments: (i) $T_{1}\;(0 \le x \le 2.5\,\text{m})$, where a pressure $p^{(1)}(\mathbf{x}) =p^{(1)}_{\mathrm{p}}(\mathbf{x})  = 100 \,\text{Pa}$ is prescribed for the macro-pore network, and the micro-pore flux is set to zero ($ \mathbf{u}^{(2)}(\mathbf{x})\, \bullet \, \widehat{\mathbf{n}}(\mathbf{x}) = 0$). (ii) $T_{2}\;(2.5 < x \le 7.5\;\text{m})$, where the normal components of both macro/micro-pore velocities are zero ($\mathbf u^{(1,2)}(\mathbf{x})\, \bullet \, \widehat{\mathbf{n}}(\mathbf{x}) = 0$). (iii) $T_{3}\;(7.5 < x \le 10\;\text{m})$, where the macro-pore pressure is vented to the reference level $p^{(1)}(\mathbf{x}) = p^{(1)}_{\mathrm{p}}(\mathbf{x}) = 0\,\text{Pa}$, while the normal component of micro-pore velocity is zero ($\mathbf u^{(2)}(\mathbf{x})\, \bullet \, \widehat{\mathbf{n}}(\mathbf{x}) = 0$).}
    \label{Fig:APINNs_Footing_BVP}
\end{figure}

%--------------------------------------------;
%  Table 5: 2D strip footing problem params  ;
%--------------------------------------------;
\begin{table}[htbp]
    \centering
    \caption{\textsf{Two-dimensional strip footing problem.} Numerical simulation parameters. \label{Tab:APINNS_2D_strip_footing}}
    \begin{tabular}{|c|c|}
        \hline
        \textbf{Parameter} & \textbf{Value} \\ \hline
        $L$         & 10 \\ 
        $H$         & 5 \\ 
        $\beta$     & 1 \\ 
        $\mu$       & 1 \\ 
        $\gamma \mathbf{b}$  & $\mathbf{0} = [0, 0]$ \\ 
        $k^{(1)}$   & 1000 \\
        $k^{(2)}$   & 10 \\
        \hline
    \end{tabular}
\end{table}

%-----------------------------------------;
%  Figure 17: Footing (results: V and P)  ;
%-----------------------------------------;
\begin{figure}
    \centering
    \includegraphics[width=0.9\linewidth]{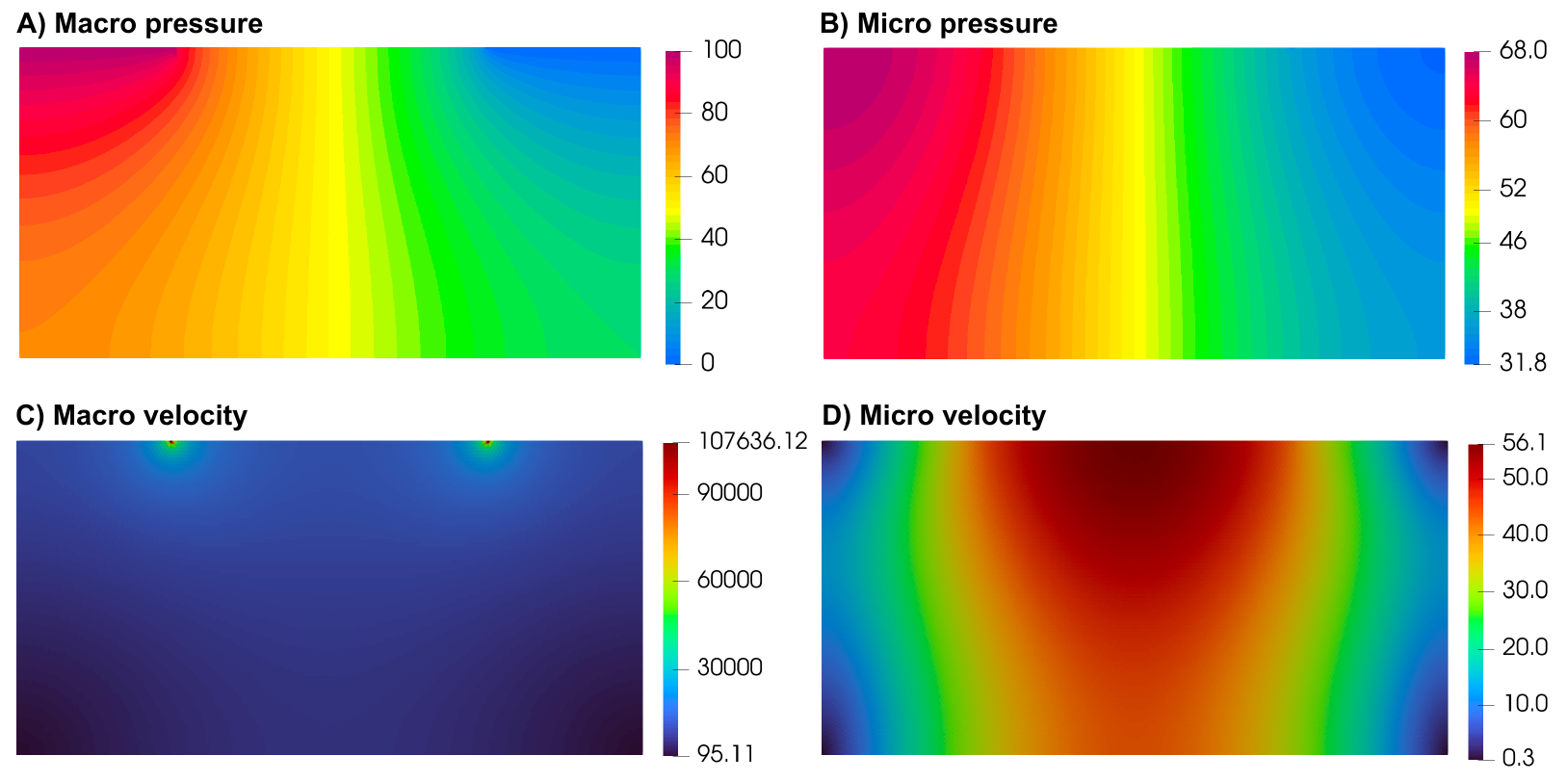}
    \caption{\textsf{Two-dimensional strip footing problem.} This figure shows the profiles of the solution fields under the proposed modeling framework. \label{Fig:APINNs_Footing_Results_P_and_V}}
\end{figure}

%-------------------------------------;
%  Figure 18: Footing (results: Chi)  ;
%-------------------------------------;
\begin{figure}
    \centering
    \includegraphics[width=0.9\linewidth]{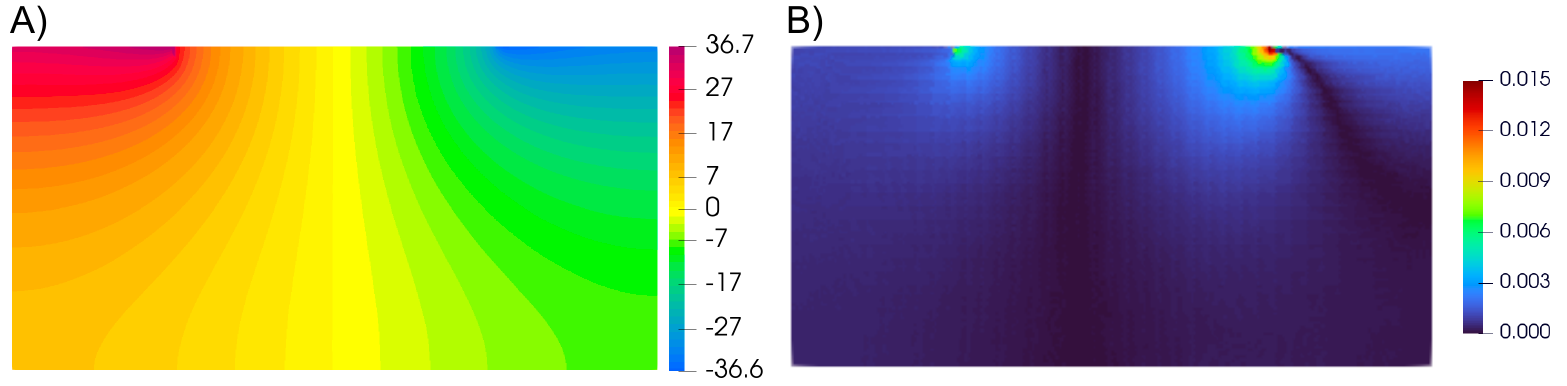}
    \caption{\textsf{Two-dimensional strip footing problem.} A) Volumetric transfer rate predicted by the proposed framework. B) Absolute error in the volumetric flow rate relative to the reference finite-element solution obtained using the stabilized mixed formulation of \citet{joodat2018modeling}, demonstrating close agreement.}
    \label{Fig:APINNs_Footing_Results_Chi}
\end{figure}
 
    %*********************************************;
%                                             ;
%  NAME                                       ;
%    S6_APINNs_Inversion.tex                  ;
%                                             ;
%  WRITTEN BY                                 ;
%    Kalyana B. Nakshatrala                   ;
%                                             ;
%*********************************************;
\section{INVERSION FRAMEWORK}
\label{Sec:S6_APINNs_IF}
As noted earlier, porous media models often involve parameters that are not directly measurable and must therefore be inferred indirectly from observable, measured quantities. Measurements such as the total flux at a production well reflect the integrated response of the coupled flow system and thereby encode information about the underlying transfer mechanisms. Leveraging such information enables the estimation of parameters that are otherwise difficult to measure directly. In the context of DPP models, one such difficult-to-measure parameter is the inter-porosity mass transfer coefficient $\beta$.

This motivates the development of an inversion framework to estimate $\beta$ from indirect observations while adhering to the governing equations of the system. Accordingly, we formulate the identification of $\beta$ as an inverse problem, wherein observational data are integrated with the governing equations to enable parameter estimation while maintaining consistency with the underlying physical laws.

Let $\mathbb{U}(\mathbf{x})$ denote the collection of field variables and $\hat{\mathbb{U}}_{\boldsymbol{\theta},\, \beta}(\mathbf{x})$ denote the neural network surrogate parameterized by $\boldsymbol{\theta}$ and conditioned on $\beta$, that is:
\begin{align}
    \hat{\mathbb{U}}_{\boldsymbol{\theta},\, \beta}(\mathbf{x}) = \Big(
\hat p^{(1)}(\mathbf{x};\theta,\beta),\,
\hat p^{(2)}(\mathbf{x};\theta,\beta),\,
\hat{\mathbf u}^{(1)}(\mathbf{x};\theta,\beta),\,
\hat{\mathbf u}^{(2)}(\mathbf{x} ;\theta,\beta)
\Big)
\end{align}
where $\boldsymbol{\theta}$ collects all trainable parameters while $\beta$ is treated as an additional trainable scalar constrained to the admissible set $\mathcal B\subset(0,\infty)$. In the inverse setting, $\beta$ enters the model through the governing-equation residuals and is inferred jointly with $\boldsymbol{\theta}$. The loss function formulation is same a the forward case a illustrated in Eqn.~\ref{Eqn:Abstarct_Total_Loss} the only change is that these residuals are now evaluated using the parameter-conditioned surrogate $\hat{\mathbb{U}}_{\boldsymbol{\theta},\, \beta}$. We next augment the loss functional with an observation-misfit term through an appropriately defined observation operator. Let $y^{\mathrm{obs}}$ denote the available observations and let $\mathcal{Q}$ be an observation operator that maps the predicted state
\(\hat{\mathbb U}_{\boldsymbol{\theta},\beta}\) to the measured quantity.
We define the observation misfit as
\begin{align}
\mathcal L_{\mathrm{obs}}(\boldsymbol{\theta},\beta)
=
\left\|
\mathcal{Q}\!\left(\hat{\mathbb U}_{\boldsymbol{\theta},\beta}\right)
-
y^{\mathrm{obs}}
\right\|_2^2
\end{align}
In particular, for flux-type observations over a measurement boundary segment \(\Gamma_{\mathrm{obs}}\subset\partial\Omega\),
we take
\begin{align}
\mathcal{Q}\!\left(\hat{\mathbb U}_{\boldsymbol{\theta},\beta}\right)
&=
\int_{\Gamma_{\mathrm{obs}}}
\mathbf u_{\mathrm{tot}}(\mathbf{x};\boldsymbol{\theta},\beta)\bullet\widehat{\mathbf{n}}(\mathbf{x})\, \mathrm{d}\Gamma
\\
\mathbf u_{\mathrm{tot}}(\mathbf{x};\boldsymbol{\theta},\beta)
&=
\hat{\mathbf u}^{(1)}(\mathbf{x};\boldsymbol{\theta},\beta)
+
\hat{\mathbf u}^{(2)}(\mathbf{x};\boldsymbol{\theta},\beta)
\end{align}
where $\widehat{\mathbf{n}}(\mathbf{x})$ denotes the outward unit normal vector. Now the final loss function is defined by
\begin{align}
\mathcal J_{\mathrm{inv}}(\boldsymbol{\theta},\beta)
=
\lambda_{\text{PDE}} \, \mathrm{Loss}_{\text{PDE}}(\boldsymbol{\theta})
+ \lambda_{\text{BC}} \, \mathrm{Loss}_{\text{BC}}(\boldsymbol{\theta})
+ \lambda_{\text{OBS}}\,\mathcal L_{\text{OBS}}(\boldsymbol{\theta},\beta)
\end{align}
where $\lambda_{\text{PDE}}$, $\lambda_{\text{BC}}$, and $\lambda_{\text{OBS}}$ are nonnegative weights
chosen according to the weighting strategy described in the forward formulation. The parameter identification problem is posed as the joint minimization
\begin{align}
    (\boldsymbol{\theta}^\star,\beta^\star)
    =
    \arg\min_{\boldsymbol{\theta}\in\Theta,\;\beta\in\mathcal B}
    \mathcal J_{\mathrm{inv}}(\boldsymbol{\theta},\beta)
\end{align}

%============================;
%  Subsection: Verification  ;
%============================;
\subsection{Verification of the inversion framework}
We consider a two-dimensional boundary value problem to assess the effectiveness of the proposed inversion framework. The flow domain is a rectangle where the motion is driven by pressure applied on the boundaries. \textbf{Figure~\ref{Fig:APINNs_Inverse_BVP}} illustrates the boundary conditions, including no-flow conditions on the top and bottom boundaries. Such configurations commonly arise in subsurface applications, including groundwater extraction, enhanced oil recovery, and filtration. The model parameters used in this study are summarized in Table~\ref{Table:APINNS_Inversion_Rectangle_BVP}.

\textbf{Figure~\ref{fig:APINNS_Pressure_Drive_Results}} evaluates the performance of the proposed inversion framework for the pressure-driven problem. Figure~\ref{fig:APINNS_Pressure_Drive_Results}(A) shows forward finite element simulations describing the relationship between the permeability parameter $\beta$ and the production well flux $Q$. The results demonstrate a clear dependence of $Q$ on $\beta$, providing a reference mapping for validating the inversion approach. Figure~\ref{fig:APINNS_Pressure_Drive_Results}(B) presents the corresponding inversion results, where $\beta$ is estimated from a given flux value $Q$. The predicted $\beta$–$Q$ relationship closely agrees with the finite element solution, indicating that the inversion framework successfully captures the underlying parameter–response behavior.

\textbf{Figure~\ref{fig:APINNS_Pressure_Drive_Results_error}} further evaluates the accuracy of the recovered parameter by comparing the volumetric transfer rate predicted by the inversion framework with the reference finite element solution for $\beta = 1$. The predicted transfer rate closely matches the finite element result across the domain, with only minor discrepancies. This agreement confirms that the proposed framework accurately recovers the governing parameter and reproduces the corresponding flow behavior.

%--------------------------------;
%  Fig 19: APINNS_Inversion_BVP  ;
%--------------------------------;
\begin{figure}[h!]
    \centering
    \includegraphics[width=0.8\linewidth]{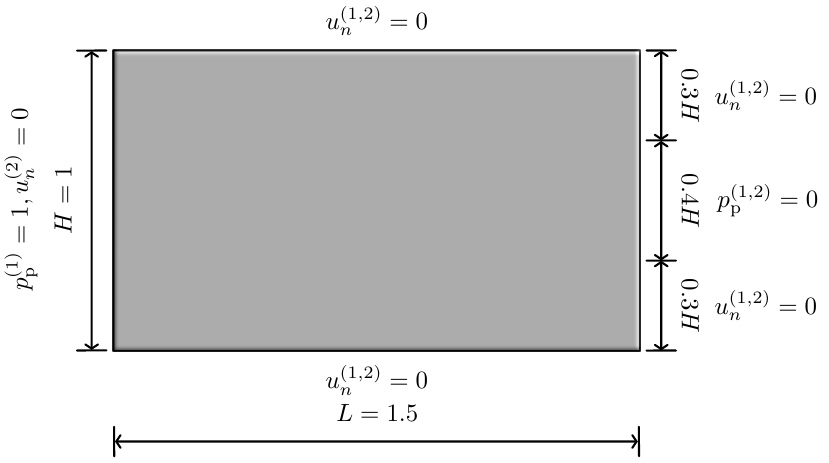}
    \caption{\textsf{Two-dimensional pressure-driven problem---Inversion.} Computational domain of length $L = 1.5$ and height $H = 1.0$. At the left boundary (i.e., $x=0$), a macro-pressure $p_{\mathrm{p}}^{(1)} = 1$ is prescribed with $u_n^{(2)} = 0$. At the right boundary (i.e., $x=L$), zero macro- and micro-pressures ($p_{\mathrm{p}}^{(1)} = p_{\mathrm{p}}^{(2)} = 0$) are imposed on the central segment of height $0.4H$, while $u_n^{(1,2)} = 0$ is applied on the remaining portion. The top and bottom boundaries satisfy no-flux conditions for both pore networks, representing impermeable surfaces. \label{Fig:APINNs_Inverse_BVP}}
\end{figure}

%-----------------------------------;
%  Table 6: APINNS_Inversion_Table  ;
%-----------------------------------;
\begin{table}[htbp]
    \centering
    \caption{\textsf{Two-dimensional pressure-driven problem---Inversion}. Numerical simulation parameters; see Fig.~\ref{Fig:APINNs_Inverse_BVP} for additional details. 
    \label{Table:APINNS_Inversion_Rectangle_BVP}}
    \begin{tabular}{|c|c|}
        \hline
        \textbf{Parameter} & \textbf{Value} \\ \hline
        $\mu$       & 1 \\ 
        $\gamma \mathbf{b}$  & $\mathbf{0} = [0, 0]$ \\ 
        $k^{(1)}$   & 1000 \\
        $k^{(2)}$   & 10 \\
        \hline
    \end{tabular}
\end{table}

%----------------------------------;
%  Figure 20: Inversion result #1  ;
%----------------------------------;
\begin{figure}[h!]
    \centering
    \includegraphics[width=0.9\linewidth]{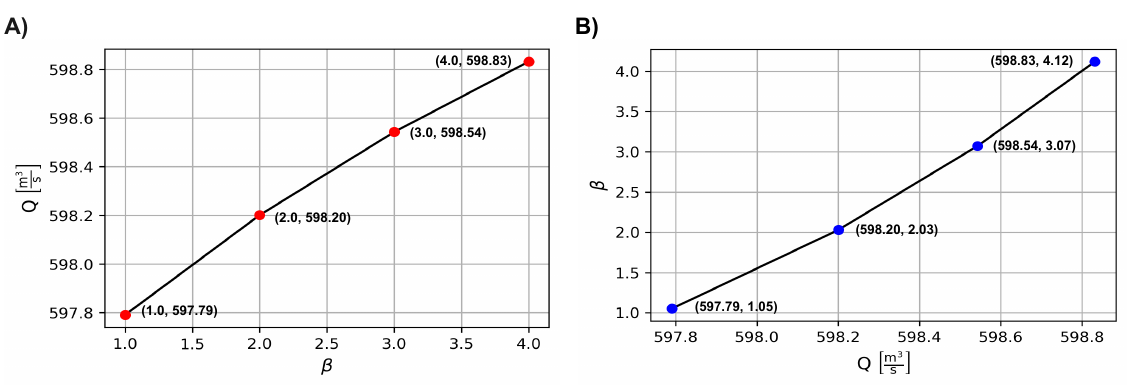}
    \caption{\textsf{Two-dimensional pressure driven problem---Inversion.} The figure illustrates the accuracy of the proposed inversion framework by comparing its predicted solutions with those obtained from the forward problem, solved using the finite element method (FEM). \textbf{(A)} shows the forward solution, where for various values of $\beta$, the volumetric flux at the production well $Q$ is computed. \textbf{(B)} shows the corresponding inverse solution obtained with the proposed inversion framework, where for a given $Q$ the corresponding $\beta$ is recovered. The agreement between the $\beta$--$Q$ relationships from the inversion framework and the finite element-based forward problem demonstrates the effectiveness of the method in recovering system parameters from observed data.}
    \label{fig:APINNS_Pressure_Drive_Results}
\end{figure}

%----------------------------------;
%  Figure 21: Inversion result #2  ;
%----------------------------------;
\begin{figure}[h!]
    \centering
    \includegraphics[width=1.0\linewidth]{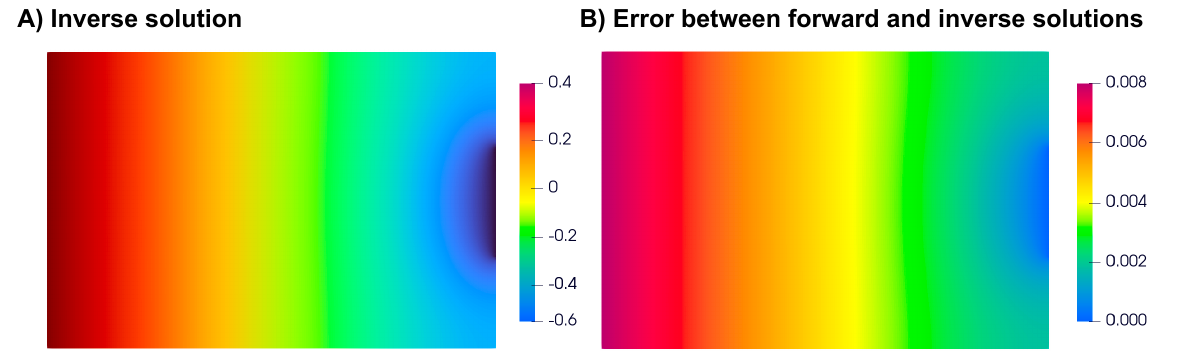}
    \caption{\textsf{Two-dimensional pressure driven problem---Inversion.} The figure illustrates the estimated volumetric transfer rate along with the discrepancy between the forward and inverse solutions for $\beta = 1.0$. This comparison highlights the capability of the proposed framework to accurately recover transfer characteristics, as evidenced by the small error relative to the reference forward solution.}
    \label{fig:APINNS_Pressure_Drive_Results_error}
\end{figure}
    %*********************************************;
%                                             ;
%  NAME                                       ;
%    S7_APINNs_Closure.tex                    ;
%                                             ;
%  WRITTEN BY                                 ;
%    Kalyana B. Nakshatrala                   ;
%                                             ;
%*********************************************;
\section{CLOSURE}
\label{Sec:S7_APINNs_Closure}

We have presented a PINN-based framework for modeling flow through porous media with dual pore networks. The underlying mathematical formulation, expressed in strong form, consists of four coupled partial differential equations supplemented by velocity and pressure boundary conditions.

The proposed framework incorporates several key features. First, boundary conditions---both velocity and pressure---are enforced in a soft manner, following standard practices in machine learning. Second, to handle the coupled structure of the governing equations, we employ a shared-trunk neural network architecture with adaptive weighting assigned to each residual term, including those associated with the boundary conditions. Finally, to better resolve sharp features in the solution fields, we utilize an adaptive distribution of collocation points, enabling improved accuracy in regions with high gradients.

Some of the main features of the framework are the following: 
\begin{enumerate}
   \item \textbf{Meshless formulation:} The approach is inherently meshless, making it particularly well-suited for problems involving complex geometries. It eliminates the need to generate meshes that conform to intricate domain boundaries—an advantage especially relevant to porous media, which often exhibit highly irregular structures.
   \item \textbf{Accurate handling of discontinuities:} It effectively captures discontinuities in the solution fields that arise across layered porous media. In contrast, finite element formulations based on the Bubnov--Galerkin method are prone to spurious oscillations—resembling the Gibbs phenomenon \citep{joodat2018modeling}.
   \item \textbf{Free from \emph{inf-sup} instabilities:} The proposed framework does not suffer from \emph{inf-sup} instabilities, which are typically addressed by the Ladyzhenskaya–Babu\v{s}ka–Brezzi (LBB) condition. In comparison, classical Galerkin finite element methods require specially constructed interpolation spaces (e.g., Raviart–Thomas spaces) to satisfy the LBB condition \citep{brezzi2012mixed}.
   \item \textbf{No need for stabilization:} To bypass the LBB condition, finite element methods often introduce stabilization terms into the Galerkin weak form \citep{masud2002stabilized,joodat2018modeling}. While these stabilized formulations can suppress spurious oscillations, the proposed {\color{blue}PINN based} framework achieves stability and accuracy without the need for such additional terms.
  \item \textbf{Well-suited for both forward and inverse problems:} The framework is applicable to both forward and inverse problem settings. This is particularly advantageous in porous media modeling, where many parameters---such as those related to volumetric flow rate---are difficult to measure directly and are often estimated empirically. By enabling parameter identification, the framework reduces reliance on empirical approximations.
\end{enumerate}

Future work may be organized along two main directions: (i) theoretical developments, and (ii) the modeling of porous-media flow with increasingly complex physical mechanisms. 
\begin{enumerate}
    \item[(i)] From the theoretical perspective, a key priority is the development of a rigorous foundation for inverse problems, motivated in part by the growing use of PINN and related methods. In particular, there is a need to identify and characterize classes of boundary value problems that are well-posed in the inverse sense. Advancing this direction will require deeper results on existence, uniqueness, and stability of solutions, along with an improved understanding of how these properties influence the performance and reliability of PINN-based approaches. 
    \item[(ii)] From the modeling perspective, future efforts may focus on incorporating more realistic and nonlinear flow phenomena. This includes extensions to models accounting for pressure-dependent viscosity, such as the Barus model, and inertial effects as described by the Darcy--Forchheimer formulation \citep{chang2017modification}. Additionally, coupling between fluid flow and solid deformation in porous media---such as in poromechanical systems governed by Biot's equations---represents an important and challenging direction \citep{allen2021mathematics}.
\end{enumerate}

    %====================================;
    %  Include all the appendices below  ;
    %------------------------------------; 
    \appendix
    %============================;
%  Appendix: Computer Codes  ;
%============================;
\section{COMPUTER CODES}
\label
The following section presents four code blocks that illustrate a shared-trunk neural network, an adaptive multi-objective weighting scheme, and a flexible training workflow for multi-task learning.

%======================================================%
%  Code 1: Shared-trunk multi-head network in PyTorch  %
%======================================================%
\subsection{Shared-trunk multi-head architecture}
The code block below specifies a shared-trunk neural network with a custom Swish activation and multiple lightweight heads. The trunk extracts common representations, while the heads produce task-specific outputs. This architecture is flexible, efficient, and well suited for multi-task or multiphysics applications.
\begin{lstlisting}[language=Python, style=mypython, caption={General shared-trunk multi-head architecture with a custom activation.}]
class Swish(nn.Module):
    def __init__(self, beta=1.0):
        super().__init__()
        self.beta = nn.Parameter(torch.tensor(beta))
    def forward(self, x):
        return x * torch.sigmoid(self.beta * x)

class MultiHeadNet(nn.Module):
    def __init__(self, in_dim, hidden=256, depth=15, head_dims=None):
        super().__init__()
        layers = []
        for i in range(depth):
            layers.append(nn.Linear(in_dim if i == 0 else hidden, hidden))
            layers.append(Swish())
            if i % 2 == 1:   # optional residual
                layers.append(nn.Identity())
        self.trunk = nn.Sequential(*layers)

        if head_dims is None:
            head_dims = {"head1": 1, "head2": 1}
        self.heads = nn.ModuleDict({
            name: nn.Linear(hidden, out_dim) 
            for name, out_dim in head_dims.items()
        })

    def forward(self, x):
        z = self.trunk(x)
        return {name: head(z) for name, head in self.heads.items()}

model = MultiHeadNet(in_dim=ENCODING_DIM,
                     hidden=256,
                     depth=12,
                     head_dims={"h1": 1, "h2": 2, "h3": 1}).to(DEVICE)
\end{lstlisting}

%======================================================;
%  Code 2: Inversion network with parameter injection  ;
%======================================================;
\subsection{Network architecture for inversion}

For the inversion task, the same shared-trunk architecture described in the previous subsection is used. The only modification is the inclusion of the unknown parameter $\beta$ as an additional input to the network. This parameter is concatenated with the encoded feature vector before entering the trunk, allowing the network to condition its predictions on the value of $\beta$.

Consequently, the first linear layer of the trunk receives $(d_{\text{enc}}+1)$ inputs, where $d_{\text{enc}}$ denotes the encoding dimension and the additional component corresponds to $\beta$. Aside from this change, the network architecture and activation functions remain unchanged. The relevant implementation changes are shown below.

\begin{lstlisting}[language=Python, style=mypython, caption={Minimal modifications required for the inversion network.}]
class TrunkNet(nn.Module):
    # First layer now takes ENCODING_DIM + 1 (extra input for beta)
    def __init__(self, in_dim, hidden=256, depth=12):
        super().__init__()
        layers = []
        for i in range(depth):
            in_features = (in_dim + 1) if i == 0 else hidden
            layers.append(nn.Linear(in_features, hidden))
            layers.append(Swish())
        self.body = nn.Sequential(*layers)

    def forward(self, enc, beta):
        # Concatenate beta with encoded input
        z_in = torch.cat([enc, beta], dim=-1)
        z = self.body(z_in)
\end{lstlisting}

%==================================;
%  Subsection: Adaptive weighting  ;
%==================================;
\subsection{Adaptive weighting}
\label{Subsec:APINNS_Adaptive_weighting}
The code below presents a general adaptive weighting mechanism for multi-objective optimization. It monitors the evolution of individual loss terms and dynamically adjusts task weights to maintain balance among objectives. This formulation can be broadly applied in settings where competing losses must be optimized jointly. 
\begin{lstlisting}[language=Python, style=mypython, caption={General adaptive weighting mechanism for balancing multiple objectives.}]
class AdaptiveWeights:
    def __init__(self, alpha=Alpha , window=Window, ratio_thr=Rhi, device=DEVICE):
        self.alpha   = alpha
        self.window  = window
        self.ratio   = ratio_thr
        self.device  = device
        self._prev   = {}            # task -> previous loss value (float)
        self._buf    = {}            # task -> deque of recent fractional improvements
        self._lam    = {}            # task -> tensor weight (on device)

    def __getattr__(self, name):
        if name.startswith("w_") and name[2:] in self._lam:
            return self._lam[name[2:]]
        raise AttributeError(name)

    @torch.no_grad()
    def rebalance(self, **loss_dict):
        # lazy init
        if not self._lam:
            for t in loss_dict:
                self._prev[t] = None
                self._buf [t] = deque(maxlen=self.window)
                self._lam [t] = torch.tensor(1.0, device=self.device)

       
        for t, L in loss_dict.items():
            Lf = float(L.detach()) if hasattr(L, "detach") else float(L)
            if self._prev[t] is not None:
                prev = self._prev[t]
                denom = prev + 1e-12
                rel_improve = (prev - Lf) / denom
                if rel_improve < 0.0:
                    rel_improve = 0.0
                self._buf[t].append(rel_improve)
            self._prev[t] = Lf

        if any(len(buf) < self.window for buf in self._buf.values()):
            return  # weights remain as-is

        V = {t: (sum(buf)/self.window) for t, buf in self._buf.items()}
        vmax, vmin = max(V.values()), min(V.values())

        if vmax == vmin or vmax / max(vmin, 1e-12) <= self.ratio:
            return  # no reweighting

        # fastest (largest improvement) 
        f = max(V, key=V.get)
        for t in self._lam:
            if t == f:
                self._lam[t].fill_(1.0)
            else:
                # slower tasks get larger weights 
                Rj = (vmax - V[t]) / (vmax - vmin)
                self._lam[t].fill_(1.0 + self.alpha * Rj)

weights = AdaptiveWeights(alpha=Alpha, window=Window, ratio_thr=Rho)
\end{lstlisting}

%=================================;
%  Subsection: Adaptive sampling  ;
%=================================;
\subsection{Adaptive sampling}
\label{Subsec:APINNS_Adaptive_sampling}
Finally, the code below outlines a generic training framework that alternates between Adam optimization, L-BFGS refinement, and adaptive enrichment of training points. While multiple objectives are balanced through an adaptive weighting mechanism, residual-based sampling focuses new points in regions of high error.
\begin{lstlisting}[language=Python, style=mypython, caption={General training loop combining Adam pretraining, L-BFGS refinement, and adaptive sampling.}]
for round in range(NUM_ROUNDS + 1):
    print(f"\nRound {round} | interior pts = {len(train_cloud)}")

    #---------------- Adam phase ----------------;
    for epoch in range(EPOCHS_ADAM):
        # sample batch of training points
        idx = torch.randint(0, len(train_cloud), (BATCH_SIZE,), device=DEVICE)
        x   = train_cloud[idx].detach().clone().requires_grad_(True)

        # evaluate multiple objectives
        L1 = loss_fn1(x)       
        L2 = loss_fn2(x)       
        L3 = loss_fn3(x)       

        # adaptive weighting
        weights.rebalance(obj1=L1, obj2=L2, obj3=L3)
        w1, w2, w3 = weights.w_obj1, weights.w_obj2, weights.w_obj3

        # backward on weighted sum
        adam.zero_grad()
        loss = w1 * L1 + w2 * L2 + w3 * L3
        loss.backward()
        torch.nn.utils.clip_grad_norm_(model.parameters(), 1.0)
        adam.step()
        scheduler.step(loss.detach())

        if epoch % LOG_INTERVAL == 0 or epoch == EPOCHS_ADAM - 1:
            print(f"epoch {epoch:4d} | "
                  f"L1 {L1:.1e}  L2 {L2:.1e}  L3 {L3:.1e} | "
                  f"w1 {w1:.2f} w2 {w2:.2f} w3 {w3:.2f}")

    #---------------- L-BFGS refinement ----------------;
    def closure():
        lbfgs.zero_grad()
        L1, L2, L3 = loss_fn1(train_cloud), loss_fn2(train_cloud), loss_fn3(train_cloud)
        tot = weights.w_obj1 * L1 + weights.w_obj2 * L2 + weights.w_obj3 * L3
        tot.backward(retain_graph=True)
        return tot

    lbfgs.step(closure)

    #---------------- Adaptive enrichment ----------------;
    if round == NUM_ROUNDS:
        break

    cand = sample_domain_points(NEW_POINTS * 3, device=DEVICE).requires_grad_(True)
    residuals = compute_residuals(cand)
    with torch.no_grad():
        score = residuals.norm(dim=-1)

    top_idx    = torch.topk(score, NEW_POINTS).indices
    train_cloud = torch.cat([train_cloud, cand[top_idx]], dim=0)

    if len(train_cloud) > MAX_POINTS:
        perm = torch.randperm(len(train_cloud), device=DEVICE)[:MAX_POINTS]
        train_cloud = train_cloud[perm]
    train_cloud.requires_grad_(True)

print("Training finished.")
\end{lstlisting}

%===============================;
%  Data availability statement  ;
%===============================;
\section*{DATA AVAILABILITY STATEMENT}
    Data sharing is not applicable to this article as no new data were created or analyzed in this study. The code snippets used in this work are provided in the Appendix.

    \vspace{0.1in}
    %=======================;
    %  Funding declaration  ;
    %=======================;
    \noindent\textbf{Funding Declaration.} The authors acknowledge the support from the Environmental Molecular Sciences Laboratory (EMSL), a DOE Office of Science User Facility sponsored by the Biological and Environmental Research program under contract no: DE-AC05-76RL01830 (Large-Scale Research User Project No: 60720, Award DOI: \url{10.46936/lser.proj.2023.60720/60008914}). The views and opinions of authors expressed herein do not necessarily state or reflect those of the United States Government or any agency thereof.

     \vspace{0.1in}
     \noindent\textbf{Declaration of Competing Interests.} The authors declare that they have no known competing financial interests or personal relationships that could have appeared to influence the work reported in this paper.

%================;
%  Bibliography  ;
%================;
\bibliographystyle{plainnat}
\bibliography{Master_References}

@article{aavatsmark2002introduction,
  title={{An introduction to multipoint flux approximations for quadrilateral grids}},
  author={I.~Aavatsmark},
  journal={Computational Geosciences},
  volume={6},
  number={4},
  pages={405--432},
  year={2002},
  publisher={Springer},
  doi={10.1007/s10596-002-9001-2}
}

@book{adams2003sobolev,
  title     = {{Sobolev Spaces}},
  author    = {R.~A.~Adams and J.~J.~F.~Fournier},
  volume    = {140},
  year      = {2003},
  publisher = {Elsevier},
  series    = {Pure and Applied Mathematics},
  edition   = {2nd},
  address   = {Amsterdam}
}

@article{adhikari2025reactive,
  title={{Reactive transport modeling with physics-informed machine learning for critical minerals applications}},
  author={K.~Adhikari and M.~L.~Mamud and M.~K.~Mudunuru and K.~B.~Nakshatrala},
  journal={Transport in Porous Media},
  year    = {2026},
  volume  = {153},
  pages   = {45},
  doi     = {10.1007/s11242-026-02301-9}
}

@book{allen2021mathematics,
  title={{The Mathematics of Fluid Flow Through Porous Media}},
  author={M.~B.~Allen},
  year={2021},
  publisher={John Wiley \& Sons},
  address = {New Jersey}
}

@book{Axler2020_MeasureIntegrationRealAnalysis,
  author    = {S.~Axler},
  title     = {{Measure, Integration \& Real Analysis}},
  series    = {Graduate Texts in Mathematics},
  volume    = {282},
  publisher = {Springer Nature},
  address   = {Cham},
  year      = {2020}
}

@article{Barenblatt1960,
  author    = {G. I. Barenblatt and I. P. Zheltov and I. N. Kochina},
  title     = {Basic concepts in the theory of seepage of homogeneous liquids in fissured rocks},
  journal   = {Journal of Applied Mathematics and Mechanics},
  volume    = {24},
  number    = {5},
  pages     = {1286--1303},
  year      = {1960},
  doi       = {10.1016/0021-8928(60)90107-6}
}

@book{brezis2011functional,
  title={{Functional Analysis, Sobolev Spaces and Partial Differential Equations}},
  author={H.~Brezis},
  year={2011},
  publisher={Springer},
  address={New York}
}

@book{brezzi2012mixed,
  title={{Mixed and Hybrid Finite Element Methods}},
  author={F.~Brezzi and M.~Fortin},
  volume={15},
  year={2012},
  publisher={Springer Science \& Business Media},
  address={New York}
}

@article{caruana1997multitask,
  title={Multitask learning},
  author={R.~Caruana},
  journal={Machine Learning},
  volume={28},
  pages={41--75},
  year={1997},
  publisher={Springer},
  address = {New York},
  doi={10.1023/A:1007379606734}
}

@article{chang2017modification,
  title     = {{Modification to Darcy–Forchheimer model due to pressure-dependent viscosity: consequences and numerical solutions}},
  author    = {J.~Chang and K.~B.~Nakshatrala and J.~N.~Reddy},
  journal   = {Journal of Porous Media},
  volume    = {20},
  number    = {3},
  pages     = {263--285},
  year      = {2017},
  doi       = {10.1615/JPorMedia.v20.i3.60},
}

@book{chen2006computational,
  title={{Computational Methods for Multiphase Flow in Porous Media}},
  author={Z.~Chen and G.~Huan and Y.~Ma},
  year={2006},
  publisher={Soceity for Industrial and Applied Mathematics},
  address={Philadelphia}
}

@article{deng2011dual,
  title     = {A dual-continuum model for simulating reactive transport in fractured porous media},
  author    = {H.~Deng and P.~H.~Stauffer and Z.~Dai and A.~V.~Wolfsberg},
  journal   = {Advances in Water Resources},
  volume    = {34},
  number    = {9},
  pages     = {1101--1111},
  year      = {2011},
  publisher = {Elsevier},
  doi       = {10.1016/j.advwatres.2011.06.003}
}

@book{dickenson1997filters,
  title={{Filters and Filtration Handbook}},
  author={T.~C.~Dickenson},
  year={1997},
  publisher={Elsevier},
  address = {New York}
}

@article{dissanayake1994neural,
  title={Neural-network-based approximations for solving partial differential equations},
  author={M.~W.~M.~G.~Dissanayake and P.~T.~Nhan},
  journal={Communications in Numerical Methods in Engineering},
  volume={10},
  number={3},
  pages={195--201},
  year={1994},
  doi = {10.1002/cnm.1640100303},
  publisher={Wiley Online Library}
}

@book{evanspartial,
  title={{Partial Differential Equations}},
  author={L.~C.~Evans},
  year={1998},
  publisher={American Mathematical Society},
  address={Providence}
}

@article{gao2025physics,
  title={Physics-informed neural networks with adaptive loss weighting algorithm for solving partial differential equations},
  author={B.~Gao and R.~Yao and Y.~Li},
  journal={Computers \& Mathematics with Applications},
  volume={181},
  pages={216--227},
  year={2025},
  doi={/10.1016/j.camwa.2025.01.007}
}

@article{Gerke1993,
  author    = {H. H. Gerke and M. Th. van Genuchten},
  title     = {A dual-porosity model for simulating the preferential movement of water and solutes in structured porous media},
  journal   = {Water Resources Research},
  volume    = {29},
  number    = {2},
  pages     = {305--319},
  year      = {1993},
  doi       = {10.1029/92WR02339}
}

@book{goodfellow2016deep,
  title={{Deep Learning}},
  author={I.~Goodfellow and Y.~Bengio and A.~Courville},
  year={2016},
  publisher={MIT Press Cambridge}, 
  address={Cambridge},
  note={\url{http://www.deeplearningbook.org}}
}

@article{jagtap2020adaptive,
  title={Adaptive activation functions accelerate convergence in deep and physics-informed neural networks},
  author={A.~D.~Jagtap and K.~Kawaguchi and G.~E.~Karniadakis},
  journal={Journal of Computational Physics},
  volume={404},
  pages={109136},
  year={2020},
  publisher={Elsevier},
  doi={10.1016/j.jcp.2019.109136}
}

@article{jagtap2023coolpinns,
  title={{CoolPINNs: A physics-informed neural network modeling of active cooling in vascular systems}},
  author={N.~V.~Jagtap and M.~K.~Mudunuru and K.~B.~Nakshatrala},
  journal={Applied Mathematical Modelling},
  volume={122},
  pages={265--287},
  year={2023},
  publisher={Elsevier},
  doi={10.1016/j.apm.2023.04.020}
}

@article{joodat2018modeling,
  title={{Modeling flow in porous media with double porosity/permeability: A stabilized mixed formulation, error analysis, and numerical solutions}},
  author={S.~H.~S.~Joodat and K.~B.~Nakshatrala and R.~Ballarini},
  journal={Computer Methods in Applied Mechanics and Engineering},
  volume={337},
  pages={632--676},
  year={2018},
  doi={10.1016/j.cma.2018.04.004}
}

@article{joshaghani2019stabilized,
  title={{A stabilized mixed discontinuous Galerkin formulation for double porosity/permeability model}},
  author={M.~S.~Joshaghani and S.~H.~S.~Joodat and K.~B.~Nakshatrala},
  journal={Computer Methods in Applied Mechanics and Engineering},
  volume={352},
  pages={508--560},
  year={2019},
  doi={10.1016/j.cma.2019.04.010}
}

@article{joshaghani2019composable,
  title={Composable block solvers for the four-field double porosity/permeability model},
  author={M.~S.~Joshaghani and J.~Chang and K.~B.~Nakshatrala and M.~G.~Knepley},
  journal={Journal of Computational Physics},
  volume={386},
  pages={428--466},
  year={2019},
  doi={10.1016/j.jcp.2019.02.020}
}

@article{katende2025stability,
  author  = {R.~Katende},
  title   = {Stability Analysis of Physics-Informed Neural Networks via Variational Coercivity, Perturbation Bounds, and Concentration Estimates},
  journal = {arXiv preprint},
  doi     = {10.48550/arXiv.2506.13554},
  year    = {2025}
}

@book{ketkar2021deep,
  title={Deep Learning with Python: Learn Best Practices of Deep Learning Models with PyTorch},
  author={N.~Ketkar and J.~Moolayil and N.~Ketkar and J.~Moolayil},
  year={2021},
  publisher={Springer},
  address ={New York}
}

@article{kingma2014adam,
  title={{Adam: A method for stochastic optimization}},
  author={D.~P.~Kingma and J.~Ba},
  journal={arXiv preprint},
  doi = {10.48550/arXiv.1412.6980},
  year={2014}
}

@article{kim2022fast,
  title={A fast and accurate physics-informed neural network reduced order model with shallow masked autoencoder},
  author={Y.~ Kim and Y.~Choi and D.~Widemann and T.~Zohdi},
  journal={Journal of Computational Physics},
  volume={451},
  year={2022},
  doi = {10.1016/j.jcp.2021.110841},
  publisher={Elsevier}
}

@article{kissas2020machine,
  title={{Machine learning in cardiovascular flows modeling: Predicting arterial blood pressure from non-invasive measurements}},
  author={G.~Kissas and Y.~Yang and E.~Hwuang and W.~R.~Witschey and J.~A.~Detre and P.~Perdikaris},
  journal={Scientific Reports},
  volume={10},
  number={1},
  pages={1--11},
  year={2020},
  publisher={Nature Publishing Group},
  doi={10.1038/s41598-020-62060-w}
}

@inproceedings{krishnapriyan2021characterizing,
    title={{Characterizing possible failure modes in physics-informed neural networks}},
  author={A.~Krishnapriyan and A.~Gholami and S.~Zhe and R.~M.~Kirby and M.~W.~Mahoney},
  booktitle = {Advances in Neural Information Processing Systems 34 (NeurIPS 2021)},
  year      = {2021},
  doi = {10.48550/arXiv.2109.01050},
  url       = {https://proceedings.neurips.cc/paper/2021/hash/df438e5206f31600e6ae4af72f2725f1-Abstract.html}
}

@article{lagaris2000neural,
  title={Neural-network methods for boundary value problems with irregular boundaries},
  author={I.~E.~Lagaris and A.~C.~Likas and D.~G.~Papageorgiou},
  journal={IEEE Transactions on Neural Networks},
  volume={11},
  number={5},
  pages={1041--1049},
  doi = {10.1109/72.870037},
  year={2000},
  publisher={IEEE}
}

@article{lu2021deepxde,
  title={{DeepXDE: A deep learning library for solving differential equations}},
  author={L.~Lu and X.~Meng and X.~Mao and G.~E.~Karniadakis},
  journal={SIAM Review},
  volume={63},
  number={1},
  doi = {10.1137/19M1274067},
  pages={208--228},
  year={2021},
  publisher={SIAM}
}

@article{manav2024phase,
  title={Phase-field modeling of fracture with physics-informed deep learning},
  author={M.~Manav and R.~Molinaro and S.~Mishra and L.~De Lorenzis},
  journal={Computer Methods in Applied Mechanics and Engineering},
  volume={429},
  pages={117104},
  year={2024},
  doi ={10.1016/j.cma.2024.117104},
  publisher={Elsevier}
}

@article{mao2020physics,
  title={Physics-informed neural networks for high-speed flows},
  author={Z.~Mao and A.~D.~Jagtap and G.~E.~Karniadakis},
  journal={Computer Methods in Applied Mechanics and Engineering},
  volume={360},
  pages={112789},
  year={2020},
  publisher={Elsevier},
  doi={10.1016/j.cma.2019.112789}
}

@article{masud2002stabilized,
  title={{A stabilized mixed finite element method for Darcy flow}},
  author={A.~Masud and T.~J.~R.~Hughes},
  journal={Computer Methods in Applied Mechanics and Engineering},
  volume={191},
  number={39--40},
  pages={4341--4370},
  year={2002},
  doi={10.1016/S0045-7825(02)00398-2}
}

@article{molins2015reactive,
  title     = {Reactive transport modeling of coupled processes in porous media},
  author    = {S.~Molins},
  journal   = {Reviews in Mineralogy and Geochemistry},
  volume    = {80},
  number    = {1},
  pages     = {461--481},
  year      = {2015},
  publisher = {Mineralogical Society of America},
  doi       = {10.2138/rmg.2015.80.14}
}

@article{maduri2026deepLS,
  title        = {{A machine learning--enhanced Hopf--Cole formulation for nonlinear gas flow in porous media}},
  author       = {V.~S.~Maduri and K.~B.~Nakshatrala},
  journal      = {arXiv preprint},
  year         = {2026},
  doi          = {10.48550/arXiv.2603.11250}
}

@article{nabian2019deep,
  title={A deep learning solution approach for high-dimensional random differential equations},
  author={M.~A.~Nabian and H.~Meidani},
  journal={Probabilistic Engineering Mechanics},
  volume={57},
  pages={14--25},
  doi ={10.1016/j.probengmech.2019.05.001},
  year={2019},
  publisher={Elsevier}
}

@article{nakshatrala2018modeling,
  title={{Modeling flow in porous media with double porosity/permeability: Mathematical model, properties, and analytical solutions}},
  author={K.~B.~Nakshatrala and S.~H.~S.~Joodat and R.~Ballarini},
  journal={Journal of Applied Mechanics},
  volume={85},
  number={8},
  pages={081009},
  year={2018},
  publisher={American Society of Mechanical Engineers Digital Collection},
  doi={10.1115/1.4040116}
}

@book{nrc2008critical,
  title     = {Minerals, Critical Minerals, and the U.S. Economy},
  author    = {{National Research Council}},
  year      = {2008},
  publisher = {National Academies Press},
  address   = {Washington, DC},
  doi       = {10.17226/12034}
}

@inproceedings{paszke2017automatic,
  title={Automatic differentiation in {PyTorch}},
  author={A.~Paszke and S.~Gross and S.~Chintala and G.~Chanan and E.~Yang and Z.~DeVito and Z.~Lin and A.~Desmaison and L.~Antiga and A.~Lerer},
  booktitle={Proceedings of the 31st Conference on Neural Information Processing Systems (NIPS 2017), Autodiff Workshop},
  year={2017}
}

@article{pornea2022,
  author       = {A.~G.~M.~Pornea and J.~M.~C.~Puguan and J.~L.~A.~Ruello and H.~Kim},
  title        = {{Multifunctional dual‐pore network aerogel composite material for broadband sound absorption, thermal insulation, and fire repellent applications}},
  journal      = {ACS Applied Polymer Materials},
  year         = {2022},
  volume       = {4},
  number       = {4},
  pages        = {2880--2895},
  doi          = {10.1021/acsapm.2c00139}
}

@article{raissi2017physics,
  title={Physics informed deep learning (part i): Data-driven solutions of nonlinear partial differential equations},
  author={M.~Raissi and P.~Perdikaris and G.~E.~Karniadakis},
  journal={arXiv preprint}, 
  year={2017},
  doi = {10.48550/arXiv.1711.10561}
}

@article{raissi2019physics,
  title={{Physics-informed neural networks: A deep learning framework for solving forward and inverse problems involving nonlinear partial differential equations}},
  author={M.~Raissi and P.~Perdikaris and G.~E.~Karniadakis},
  journal={Journal of Computational Physics},
  volume={378},
  pages={686--707},
  year={2019},
  doi={10.1016/j.jcp.2018.10.045}
}

@article{raissi2020hidden,
  title={{Hidden fluid mechanics: Learning velocity and pressure fields from flow visualizations}},
  author={M.~Raissi and A.~Yazdani and G.~E.~Karniadakis},
  journal={Science},
  volume={367},
  number={6481},
  pages={1026--1030},
  year={2020},
  publisher={American Association for the Advancement of Science},
  doi={10.1126/science.aaw4741}
}

@book{brenner2008mathematical,
  title        = {{The Mathematical Theory of Finite Element Methods}},
  author       = {S.~C.~Brenner and L.~R.~Scott},
  year         = {2008},
  edition      = {3rd},
  series       = {Texts in Applied Mathematics},
  volume       = {15},
  publisher    = {Springer},
  address      = {New York}
}

@article{seo2024solving,
  title={Solving real-world optimization tasks using physics-informed neural computing},
  author={S.~Seo},
  journal={Scientific Reports},
  volume={14},
  number={1},
  pages={202},
  year={2024},
  publisher={Nature Publishing Group UK London},
  doi = {10.1038/s41598-023-49977-3}
}

@article{steefel2005reactive,
  title     = {{Reactive transport modeling: An essential tool and a new research approach for the Earth sciences}},
  author    = {C.~I.~Steefel and D.~J.~DePaolo and P.~C.~Lichtner},
  journal   = {Earth and Planetary Science Letters},
  volume    = {240},
  number    = {3-4},
  pages     = {539--558},
  year      = {2005},
  publisher = {Elsevier},
  doi       = {10.1016/j.epsl.2005.09.017}
}

@book{strack2017analytical,
  title     = {{Analytical Groundwater Mechanics}},
  author    = {O.~D.~L.~Strack},
  year      = {2017},
  publisher = {Cambridge University Press},
  address   = {Cambridge}
}

@article{tancik2020fourier,
  title={Fourier features let networks learn high frequency functions in low dimensional domains},
  author={M.~Tancik and P.~Srinivasan and B.~Mildenhall and S.~Fridovich-Keil and N.~Raghavan and U.~Singhal and R.~Ramamoorthi and J.~Barron and R.~Ng},
  journal={Advances in Neural Information Processing Systems},
  volume={33},
  pages={7537--7547},
  year={2020}
}

@article{tartakovsky2020physics,
  title={Physics-informed deep neural networks for learning parameters and constitutive relationships in subsurface flow problems},
  author={A.~M.~Tartakovsky and C.~O.~Marrero  and P.~Perdikaris and G.~D.~Tartakovsky and D.~Barajas-Solano},
  journal={Water Resources Research},
  volume={56},
  number={5},
  pages={e2019WR026731},
  year={2020},
  doi={10.1029/2019WR026731}
}

@article{Vogel2000,
  author    = {T. Vogel and H. H. Gerke and R. Zhang and M. Th. van Genuchten},
  title     = {Modeling flow and transport in a two-dimensional dual-permeability system with spatially variable hydraulic properties},
  journal   = {Journal of Hydrology},
  volume    = {238},
  pages     = {78--89},
  year      = {2000},
  doi       = {10.1016/S0022-1694(00)00327-9}
}

@article{wang2021when,
  title={{When and why PINNs fail to train: A neural tangent kernel perspective}},
  author={S.~Wang and X.~Yu and P.~Perdikaris},
  journal={Journal of Computational Physics},
  volume={449},
  pages={110768},
  year={2021},
  publisher={Elsevier},
  doi={10.1016/j.jcp.2021.110768}
}

@article{wang2024piratenets,
  title={{PirateNets: Physics-informed deep learning with residual adaptive networks}},
  author={S.~Wang and B.~Li and Y.~Chen and P.~Perdikaris},
  journal={Journal of Machine Learning Research},
  volume={25},
  number={402},
  pages={1--51},
  year={2024},
  url     = {http://jmlr.org/papers/v25/24-0313.html}

}

@article{warren1963behavior,
  title = {The behavior of naturally fractured reservoirs},
  author = {J.~E.~Warren and P.~J.~Root},
  journal = {Society of Petroleum Engineers Journal},
  volume = {3},
  number = {03},
  pages = {245--255},
  year = {1963},
  publisher = {Society of Petroleum Engineers},
  doi = {10.2118/426-PA}
}

@article{zou2023hydra,
  title={{L-HYDRA: Multi-head physics-informed neural networks}},
  author={Z.~Zou and G.~E.~Karniadakis},
  journal={arXiv preprint},
  year={2023},
  doi = {10.48550/arXiv.2301.02152}
}
%%%
\end{document}